\documentclass[12pt,reqno]{article}
\usepackage[usenames]{color}
\usepackage{amssymb}
\usepackage{amsmath}
\usepackage{amsthm}
\usepackage{amsfonts}
\usepackage{amscd}
\usepackage{graphicx}
\usepackage[usenames,dvipsnames]{xcolor}
\usepackage{color}
\usepackage{graphics}
\usepackage{latexsym}
\usepackage{epsf}

\usepackage[colorlinks=true,
linkcolor=webgreen,
filecolor=webbrown,
citecolor=webgreen]{hyperref}
\usepackage{hyphenat}
\definecolor{webgreen}{rgb}{0,.5,0}
\definecolor{webbrown}{rgb}{.6,0,0}

\usepackage{fullpage}
\usepackage{float}

\usepackage{graphics}
\usepackage{latexsym}
\usepackage{epsf}

\setlength{\textwidth}{6.5in}
\setlength{\oddsidemargin}{.1in}
\setlength{\evensidemargin}{.1in}
\setlength{\topmargin}{-.1in}
\setlength{\textheight}{8.4in}

\newcommand{\seqnum}[1]{\href{https://oeis.org/#1}{\rm \underline{#1}}}

\usepackage{tikz}
\usepackage{mathrsfs}

\begin{document}
	\theoremstyle{plain}
	\newtheorem{theorem}{Theorem}
	\newtheorem{corollary}[theorem]{Corollary}
	\newtheorem{lemma}[theorem]{Lemma}
	\newtheorem{proposition}[theorem]{Proposition}
	\theoremstyle{definition}
	\newtheorem{definition}[theorem]{Definition}
	\newtheorem{notation}[theorem]{Notation}
	\newtheorem{example}[theorem]{Example}
	\newtheorem{conjecture}[theorem]{Conjecture}
	\theoremstyle{remark}
	\newtheorem{remark}[theorem]{Remark}
	
		\begin{center}
		\vskip 1cm{\LARGE\bf 
		 On The Emergence of a New Prime Number And Omega Sequences}
		\vskip 1cm
		\large
		Moustafa Ibrahim\\
		Department of Mathematics\\
		College of Science\\
		University of Bahrain\\
		 Kingdom of Bahrain\\
		\href{mailto:mimohamed@uob.edu.bh}{\tt mimohamed@uob.edu.bh}
	\end{center}
	
	\vskip .2 in
	
	\begin{abstract}
		This paper highlights the emergence of the Omega sequence in number theory and its connection with the emergence of a new prime number, and also highlights its  theoretical applications for Lucas-Lehmer primality test, and Euclid-Euler theory for even perfect numbers. We also show that Omega sequences unify and give new representations for Mersenne numbers, Fermat numbers, Lucas numbers, Fibonacci numbers, Chebyshev sequence, Dickson sequence, and others.  
	\end{abstract}
	
		\section{Summary for the main results}
	For a natural number $n$, we define $\delta(n)=n\pmod{2}$. For an arbitrary real number $x$,  $\lfloor{\frac{x}{2}}\rfloor$ is the highest integer less than or equal $\frac{x}{2}$. Due to the work of Euclid and Euler, it is well-known that an even integer $n$ is perfect if and only if $n = 2^{p-1}(2^p-1)$, where $2^{p}-1$ is prime. A prime of the form $2^p-1$ is called a \textit{Mersenne} prime. Here is a summary of some results.

	\begin{theorem}{(Lucas-Lehmer-Moustafa)}
		\label{Theorem of ABC1}
		For any given prime $p\geq 5$, $n:=2^{p-1}$, we associate the double-indexed polynomial sequences $A_r(k)$, $B_r(k)$, which are defined by
				\begin{equation}
			\label{ABC2} 
			\begin{aligned}		
				A_r(k) &=  (p-r-k) \: A_r(k-1) + \: 4 \: (p-2r) \: A_{r+1}(k-1), \quad &A_r(0) = 1  \quad   \text{for all} \:\: r,\\
		B_r(k) &=  -2 \: (n-r-k) \: B_r(k-1) - \:2 \: (n-2r-1) \: B_{r+1}(k-1),  \quad &B_r(0) = 1  \quad   \text{for all} \:\: r.
			\end{aligned}
		\end{equation}
		Then both of the ratios   
		\begin{equation}
			\label{ABC3} 
			\begin{aligned}
				\frac{A_0( \lfloor{\frac{p}{2}}\rfloor   )}{(p-1)(p-2) \cdots (p - \lfloor{\frac{p}{2}}\rfloor )} \quad	, \quad 	\frac{B_0( \lfloor{\frac{n}{2}}\rfloor   )}{(n-1)(n-2) \cdots (n - \lfloor{\frac{n}{2}}\rfloor )} 	
			\end{aligned}
		\end{equation}
		are integers. Moreover the number $2^p -1$ is prime \bf{if and only if} 
		\begin{equation}
			\label{ABC4} 
			\begin{aligned}
				\frac{A_0( \lfloor{\frac{p}{2}}\rfloor   )}{(p-1)(p-2) \cdots (p - \lfloor{\frac{p}{2}}\rfloor )}  \quad  \vert \quad \frac{B_0( \lfloor{\frac{n}{2}}\rfloor   )}{(n-1)(n-2) \cdots (n - \lfloor{\frac{n}{2}}\rfloor )}.
			\end{aligned}
		\end{equation}
		
	\end{theorem}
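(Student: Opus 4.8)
The plan is to reduce the theorem to the classical Lucas--Lehmer test by first exhibiting closed forms for the two double-indexed arrays. I expect that, for each fixed $r$, the array $A_r(k)$ unrolled via \eqref{ABC2} is a falling factorial times a truncated Binet-type sum,
\[
A_r(k)\;=\;(p-r-1)(p-r-2)\cdots(p-r-k)\,\cdot\,\Theta_A(p,r,k),
\]
and that at the diagonal point $r=0,\ k=\lfloor p/2\rfloor$ the sum $\Theta_A$ closes up to the Mersenne number:
\[
A_0\!\left(\lfloor\tfrac p2\rfloor\right)\;=\;(2^p-1)\,(p-1)(p-2)\cdots\!\left(p-\lfloor\tfrac p2\rfloor\right).
\]
(I verified this for $p=3,5,7$, where the first quotient in \eqref{ABC3} equals $7,31,127$.) Symmetrically, writing $n=2^{p-1}$ and letting $s_0=4,\ s_{j+1}=s_j^2-2$ be the Lucas--Lehmer sequence, I expect
\[
B_0\!\left(\lfloor\tfrac n2\rfloor\right)\;=\;s_{p-2}\,(n-1)(n-2)\cdots\!\left(n-\lfloor\tfrac n2\rfloor\right),
\]
equivalently that the $B$-quotient equals $s_{p-2}=\omega^{2^{p-2}}+\bar\omega^{2^{p-2}}$ with $\omega=2+\sqrt3,\ \bar\omega=2-\sqrt3$ (checked for $p=3$: $B_0(2)=84$, denominator $6$, quotient $14=s_1$).

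Granting these two identities, the theorem follows in three short steps. Integrality of the quotients in \eqref{ABC3} is immediate, since they equal $2^p-1$ and $s_{p-2}$, which are integers by construction. The divisibility \eqref{ABC4} then says exactly that $(2^p-1)\mid s_{p-2}$. Finally, this is the Lucas--Lehmer criterion: for an odd prime $p$, the number $2^p-1$ is prime if and only if $(2^p-1)\mid s_{p-2}$; I would cite this, or else supply its standard proof --- working in $\mathbb{Z}[\sqrt3]$ modulo $2^p-1$, using that $\omega$ is a unit with $\omega\bar\omega=1$, that $\omega=\tfrac12(1+\sqrt3)^2$, and quadratic reciprocity to pin down $\omega^{2^{p-1}}\equiv-1$ when $2^p-1$ is prime, together with an order/size argument on the unit group of $\mathbb{Z}[\sqrt3]$ modulo a small prime factor for the converse.

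The real content --- and the main obstacle --- is proving the two closed forms, i.e.\ that these triangular polynomial recursions with boundary value $1$ reproduce the Binet expressions $2^p-1$ and $s_{p-2}$ after the prescribed number of steps. I would attack the $A$-array by the generating-function method: set $F_r(x)=\sum_{k\ge0}A_r(k)x^k$ (or an exponential generating function, whichever best linearizes the coefficient $(p-r-k)$), convert \eqref{ABC2} into a first-order relation coupling $F_r$ to $F_{r+1}$, solve the resulting finite hierarchy from the top down, and extract the coefficient of $x^{\lfloor p/2\rfloor}$; the cancellation of the falling-factorial denominator should be forced by the factor $(p-2r)$, which suppresses the weight of the longer lattice paths once $r$ exceeds $\lfloor p/2\rfloor$. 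An alternative is a direct combinatorial proof, expanding $A_0(k)$ as a weighted sum over paths $0=r_0\le r_1\le\cdots\le r_k$ with unit steps and matching it term-by-term against $2^p-1=\sum_j\binom{p-1-j}{j}(-2)^j 3^{\,p-1-2j}$. I expect the $B$-array to be strictly harder, since there a sum of length $2^{p-2}$ must collapse to the single iterate $s_{p-2}$; the cleanest route is probably to prove a general ``Omega representation'' lemma valid for arbitrary second-order linear recurrences and then specialize it to the two sequences at hand --- presumably the role played by the $\Omega$-sequence machinery announced in the abstract.
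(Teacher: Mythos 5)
Your reduction is the correct one, and it is in fact the same reduction the paper uses: the $A$-quotient equals $2^p-1$ and the $B$-quotient equals the Lucas--Lehmer residue $s_{p-2}$ (in the paper's language, $A_r(k)=\Omega_r(k|-2,-5|p)$ with $\Psi(-2,-5,p)=2^p-1$, and $B_r(k)=\Omega_r(k|1,4|n)$ with $\Psi(1,4,n)=s_{p-2}$, via $(1+\sqrt3)^2=2(2+\sqrt3)$); integrality is then automatic, and the divisibility \eqref{ABC4} becomes exactly $(2^p-1)\mid s_{p-2}$, i.e.\ the classical Lucas--Lehmer criterion, which the paper likewise simply invokes (Theorem \ref{U14}, citing \cite{Jean}). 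The problem is that your proposal stops at precisely the point where all of the actual work lies: the two closed-form evaluations of the triangular recursions are only conjectured, supported by checks at $p=3,5,7$, with three candidate strategies (a generating-function hierarchy, a lattice-path expansion, a ``general Omega representation lemma'') none of which is carried out. As written, the argument is therefore a correct reduction plus an unproven key lemma, not a proof.

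For comparison, the paper proves that missing lemma in full generality (Theorem \ref{k0}): for any $(\alpha,\beta)\neq(0,0)$, $\Psi(\alpha,\beta,n)=\Omega_0\big(\lfloor\frac n2\rfloor|\alpha,\beta|n\big)/\big((n-1)(n-2)\cdots(n-\lfloor\frac n2\rfloor)\big)$, and the quotient is an integer. The route is not generating functions but a two-variable polynomial identity plus differential operators: one expands $(\beta a-\alpha b)^{\lfloor n/2\rfloor}\frac{x^n+y^n}{(x+y)^{\delta(n)}}$ in the algebraically independent quadratic forms $\alpha x^2+\beta xy+\alpha y^2$ and $ax^2+bxy+ay^2$ (Theorem \ref{exp1}), shows the coefficients $\Psi_r$ satisfy $(\alpha\partial_a+\beta\partial_b)\Psi_r=-(r+1)\Psi_{r+1}$ (Theorems \ref{Aexp1}, \ref{A1}), applies this operator repeatedly to the explicit polynomial $\Psi(a,b,n)=\sum_i\frac{n}{n-i}\binom{n-i}{i}(-a)^i(2a-b)^{\lfloor n/2\rfloor-i}$, and then checks (Theorems \ref{FD1}, \ref{H1}) that the resulting coefficients, after dividing out explicit factorial ratios, obey exactly the $\Omega$-recurrence with initial value $1$; evaluating at $r=0$, $k=\lfloor n/2\rfloor$ and using $\Psi_{\lfloor n/2\rfloor}=(-1)^{\lfloor n/2\rfloor}\Psi(\alpha,\beta,n)$ yields the quotient formula, which specialized at $(\alpha,\beta)=(-2,-5)$, level $p$, and $(1,4)$, level $n$, gives both of your closed forms at once (so the $B$-array is not ``strictly harder'' than the $A$-array --- both are the same general statement). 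If you pursue your own plan, this general identity is the lemma you must actually establish; the EGF hierarchy you sketch for $A_r(k)$ would still leave the $B$-case, and the term-by-term matching against $\sum_j\binom{p-1-j}{j}(-2)^j3^{p-1-2j}$ is essentially the combinatorial shadow of the paper's $\lambda_r(k)$ bookkeeping, which is where the factorial denominator cancellation is genuinely forced.
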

	
		\begin{theorem}{(New representation of Mersenne numbers)}
		\label{Theorem of G2fQ}
		For any given odd natural number $p$, the number $2^p -1$ can be represented by
		\begin{equation}
			\label{G2BfQ} 
			\begin{aligned}
				2^p-1 \:=\: \frac{A_0( \lfloor{\frac{p}{2}}\rfloor   )}{(p-1)(p-2) \cdots (p - \lfloor{\frac{p}{2}}\rfloor )}, 	
			\end{aligned}
		\end{equation}
		
		where the double-indexed polynomial sequence $A_r(k)$ is defined by the recurrence relation 
		\begin{equation}
			\label{G2-e-q} 
			\begin{aligned}
				A_r(k) &=  (p-r-k) \: A_r(k-1) + \: 4 \: (p-2r) \: A_{r+1}(k-1), \quad &A_r(0) = 1  \quad   \text{for all} \:\: r.
			\end{aligned}
		\end{equation}
	\end{theorem}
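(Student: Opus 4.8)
The plan is to strip the falling‑factorial weights off $A_r(k)$, read off a closed form for the resulting normalized sequence, and then recognize that closed form — specialized at $r=0$ and $k=\lfloor p/2\rfloor$ — as (an integral of) the Lucas power sum $\alpha^{p}+\beta^{p}$ with $\alpha=2$, $\beta=-1$, whose value is $2^{p}-1$ exactly because $p$ is odd. Throughout write $m=\lfloor p/2\rfloor=\tfrac{p-1}{2}$.

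\textbf{Normalization.} Put $\widetilde A_r(k)=A_r(k)\big/\bigl((p-r-1)(p-r-2)\cdots(p-r-k)\bigr)$, with the empty product $1$ when $k=0$. Dividing the recurrence \eqref{G2-e-q} by $(p-r-1)\cdots(p-r-k)$, using that $(p-r-1)\cdots(p-r-k)=(p-r-k)\cdot(p-r-1)\cdots(p-r-k+1)$ and that $(p-r-1)\cdots(p-r-k)=(p-r-1)\cdot\bigl((p-r-2)\cdots(p-r-k)\bigr)$ with $(p-r-2)\cdots(p-r-k)$ being the defining product for $\widetilde A_{r+1}(k-1)$, the cross term collapses and one obtains the cleaner recurrence
\[
\widetilde A_r(k)=\widetilde A_r(k-1)+\frac{4(p-2r)}{p-r-1}\,\widetilde A_{r+1}(k-1),\qquad \widetilde A_r(0)=1 .
\]
The quantity appearing in \eqref{G2BfQ} is precisely $\widetilde A_0(m)$, so it suffices to prove $\widetilde A_0(m)=2^{p}-1$. (All denominators met along the way are positive, since $p-r-j\ge p-2m=1$ for the indices $r,j\le m$ that occur.)

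\textbf{Closed form and specialization.} I would next prove by induction on $k$ that
\[
\widetilde A_r(k)=\sum_{s=0}^{k}\binom{k}{s}\,4^{s}\prod_{t=0}^{s-1}\frac{p-2r-2t}{p-r-1-t}.
\]
The base case $k=0$ is immediate; in the inductive step one feeds this expression for $\widetilde A_r(k-1)$ and $\widetilde A_{r+1}(k-1)$ into the recurrence, uses the index shift $\dfrac{p-2r}{p-r-1}\prod_{t=0}^{s-1}\dfrac{p-2(r+1)-2t}{p-(r+1)-1-t}=\prod_{t=0}^{s}\dfrac{p-2r-2t}{p-r-1-t}$, and collects coefficients by Pascal's rule $\binom{k-1}{s}+\binom{k-1}{s-1}=\binom{k}{s}$. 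Now set $r=0$, $k=m$, and use $p=2m+1$: a short factorial simplification collapses the $s$‑th summand to $p\cdot\dfrac{2^{s}}{p-2s}\dbinom{p-1-s}{s}$, so that $\widetilde A_0(m)=p\sum_{s=0}^{m}\dfrac{2^{s}}{p-2s}\dbinom{p-1-s}{s}$.

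\textbf{Evaluation of the sum.} Insert $\dfrac{1}{p-2s}=\int_0^1 x^{\,p-1-2s}\,dx$ and interchange the finite sum with the integral. The Binet form of the generating function $\sum_{s\ge0}\binom{n-s}{s}y^{s}$ (whose ingredients are the roots of $t^{2}=t+y$) shows that, with $n=p-1$ and $y=2x^{-2}$,
\[
x^{p-1}\sum_{s=0}^{m}\binom{p-1-s}{s}(2x^{-2})^{s}=\frac{A^{p}-B^{p}}{A-B},\qquad A,B=\tfrac12\bigl(x\pm\sqrt{x^{2}+8}\bigr),
\]
so that $A+B=x$, $AB=-2$, $A-B=\sqrt{x^{2}+8}$. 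From $A'=A/(A-B)$ and $B'=-B/(A-B)$ one gets $\dfrac{d}{dx}\bigl(A^{p}+B^{p}\bigr)=p\,\dfrac{A^{p}-B^{p}}{A-B}$, hence
\[
\sum_{s=0}^{m}\frac{2^{s}}{p-2s}\binom{p-1-s}{s}=\int_{0}^{1}\frac{A^{p}-B^{p}}{A-B}\,dx=\frac1p\Bigl[A^{p}+B^{p}\Bigr]_{x=0}^{x=1}.
\]
At $x=1$ we have $A=2,\ B=-1$, so $A^{p}+B^{p}=2^{p}+(-1)^{p}=2^{p}-1$; at $x=0$ we have $A=\sqrt2,\ B=-\sqrt2$, so $A^{p}+B^{p}=(\sqrt2)^{p}\bigl(1+(-1)^{p}\bigr)=0$ — both evaluations using that $p$ is odd. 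Therefore $\widetilde A_0(m)=p\cdot\dfrac{2^{p}-1}{p}=2^{p}-1$, which is exactly \eqref{G2BfQ}.

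\textbf{Where the difficulty lies.} The only step needing genuine insight is guessing the closed form for $\widetilde A_r(k)$; after that the induction is routine bookkeeping with shifted indices and Pascal's rule, and the endgame rests on the classical fact that $2^{p}-1=2^{p}+(-1)^{p}$ is the Lucas number $V_{p}$ for $(P,Q)=(1,-2)$ (roots $\alpha=2$, $\beta=-1$ of $t^{2}=t+2$), which is precisely why oddness of $p$ is the exact hypothesis. As an alternative to the integral identity, Step ``Evaluation of the sum'' can be finished by checking that both $p\sum_{s=0}^{m}\frac{2^{s}}{p-2s}\binom{p-1-s}{s}$ and $2^{p}-1$ obey a common recurrence in $m$, but the differentiation argument above is the shortest route.
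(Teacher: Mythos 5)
Your proof is correct, but it follows a genuinely different route from the paper's. The paper obtains this theorem as a one-line specialization of its general machinery: it identifies $A_r(k)=\Omega_r\big(k|-2,-5|p\big)$, invokes the first fundamental theorem of the $\Omega$-sequence (Theorem \ref{k0}, itself built from the $\Psi$-polynomial identity \eqref{ex00}, the differential operators of Theorem \ref{A1}, and the $\lambda_r(k)$ recursion of Theorems \ref{FD1}--\ref{H1}), and then uses $\Psi(-2,-5,p)=2^p+(-1)^p=2^p-1$, which comes from the power-sum representation \eqref{WW4} at $x=2$, $y=-1$. You instead work directly with the recurrence: you strip off the falling-factorial weights, prove a closed form for the normalized sequence by induction, and evaluate the resulting sum $p\sum_{s}\frac{2^{s}}{p-2s}\binom{p-1-s}{s}$ by an integral/Binet argument. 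In effect your closed form re-derives, for the single point $(a,b)=(-2,-5)$ (where $2a-b=1$, $-a=2$), the paper's explicit formula \eqref{comp3}, and your integral evaluation re-proves the special case $x=2$, $y=-1$ of identity \eqref{00}; so your argument is a self-contained, elementary rediscovery of exactly the two ingredients the paper combines abstractly. What your route buys is independence from the $\Psi$--$\Omega$ formalism (no polynomial identities in $x,y$, no differential operators) and a transparent reason why oddness of $p$ is the precise hypothesis; what the paper's route buys is generality, since the same Theorem \ref{k0} simultaneously yields all the other representations (Lucas, Fermat, Chebyshev, Dickson) by merely changing the point $(\alpha,\beta)$. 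All the individual steps you outline check out: the normalized recurrence, the inductive closed form (Pascal's rule plus the index shift), the factorial collapse of the $s$-th summand (using $\frac{p}{p-2s}\binom{p-1-s}{s}=\frac{p}{p-s}\binom{p-s}{s}$), and the endpoint evaluation $A^{p}+B^{p}\big|_{x=1}=2^{p}-1$, $A^{p}+B^{p}\big|_{x=0}=0$ for odd $p$.
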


	\begin{theorem}{}
		\label{KG11}
		For any given natural number $n$, we associate the double-indexed polynomial sequence $U_r(k)$, which is defined by
		\begin{equation}
			\label{KG22} 
			\begin{aligned}
				U_r(k) &=  \: (n-r-k) \: U_r(k-1) - \:2 \: (n-2r-\delta(n-1)) \: U_{r+1}(k-1), \\
				 \quad U_r(0) &= 1   \quad   \text{for all} \:\: r.
			\end{aligned}
		\end{equation}
		Then 	
		\begin{equation}
			\label{KG44} 
			\frac{U_0( \lfloor{\frac{n}{2}}\rfloor   )}{(n-1)(n-2) \cdots (n - \lfloor{\frac{n}{2}}\rfloor )} 	\:=
			\begin{cases}
				+2	      &   n \equiv \pm 0       \pmod{6} \\
				+1	      &   n \equiv \pm 1       \pmod{6} \\
				-1        &   n \equiv \pm 2       \pmod{6} \\
				-2	      &   n \equiv \pm 3      \pmod{6} 
			\end{cases}.    
		\end{equation}
	\end{theorem}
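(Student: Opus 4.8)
The plan is to collapse the double-indexed sequence to a single explicit finite sum, identify that sum as the $n$-th Dickson polynomial of the first kind evaluated at $x=a=1$ (equivalently the Lucas sequence $V_n$ attached to $x^2-x+1$), and then read off its value from the sixth roots of unity. Throughout set $m=\lfloor n/2\rfloor$ and $\epsilon=\delta(n-1)$. Computing $U_0(m)$ from \eqref{KG22} uses only the values $U_r(k)$ with $r+k\le m$, so every factor $n-r-i$ ($1\le i\le k$) that occurs below satisfies $n-r-i\ge n-(r+k)\ge n-m\ge 1$; in particular no denominator vanishes.

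\emph{Step 1: reduction to a finite sum.} Normalize by setting
\begin{equation*}
U_r(k)=\bigl[(n-r-1)(n-r-2)\cdots(n-r-k)\bigr]\,\widetilde U_r(k),\qquad \widetilde U_r(0)=1 .
\end{equation*}
Because the falling factorial on the right is multiplied by $(n-r-k)$ in passing from $k-1$ to $k$, substituting into \eqref{KG22} and cancelling turns the recurrence into the telescoping form
\begin{equation*}
\widetilde U_r(k)=\widetilde U_r(k-1)+c_r\,\widetilde U_{r+1}(k-1),\qquad c_r:=\frac{-2\,(n-2r-\epsilon)}{\,n-r-1\,}.
\end{equation*}
An induction on $k$ (proving the formula for all $r$ simultaneously, using Pascal's rule) gives $\widetilde U_r(k)=\sum_{s=0}^{k}\binom{k}{s}\prod_{i=r}^{r+s-1}c_i$, hence
\begin{equation*}
\frac{U_0(m)}{(n-1)(n-2)\cdots(n-m)}=\widetilde U_0(m)=\sum_{s=0}^{m}\binom{m}{s}(-2)^s\,\frac{(n-\epsilon)(n-\epsilon-2)\cdots(n-\epsilon-2s+2)}{(n-1)(n-2)\cdots(n-s)} .
\end{equation*}

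\emph{Step 2: identification with a Dickson value.} I claim this sum equals
\begin{equation*}
D_n(1,1)=\sum_{k=0}^{\lfloor n/2\rfloor}\frac{(-1)^k\,n}{\,n-k\,}\binom{n-k}{k},
\end{equation*}
the explicit expansion of $D_n(x,a)=\sum_k\tfrac{n}{n-k}\binom{n-k}{k}(-a)^k x^{n-2k}$ at $x=a=1$ (which is $2T_n(\tfrac12)$, twice a value of the Chebyshev polynomial of the first kind). Both series begin with the term $1$; writing the quantities $n-\epsilon-2i$, $n-i-1$, and the Dickson summand as affine functions of the summation index and then splitting on the parity of $n$ — which forces $(m,\epsilon)=(n/2,1)$ or $(m,\epsilon)=((n-1)/2,0)$ and correspondingly makes one of $\tfrac n2$, $\tfrac{n-1}2$ the nonnegative integer at which the series stops — a short computation shows both series have the same ratio of consecutive terms, namely $\dfrac{4\,(s-\tfrac n2)(s-\tfrac{n-1}2)}{(s+1)(s+1-n)}$, and stop at the same index $s=m$. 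Hence they agree term by term. (Concisely: each equals the terminating series ${}_2F_1(-\tfrac n2,-\tfrac{n-1}2;\,1-n;\,4)$.)

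\emph{Step 3: evaluation, and where the work lies.} The sequence $n\mapsto D_n(1,1)$ is the Lucas sequence of $x^2-x+1$: $V_0=2$, $V_1=1$, $V_n=V_{n-1}-V_{n-2}$. Its characteristic roots are the primitive sixth roots of unity $e^{\pm i\pi/3}$, so $V_n=e^{in\pi/3}+e^{-in\pi/3}=2\cos\tfrac{n\pi}{3}$, which is periodic of period $6$ with values $2,1,-1,-2,-1,1$ for $n\equiv0,1,2,3,4,5\pmod 6$; using $4\equiv-2$ and $5\equiv-1\pmod 6$ this is exactly \eqref{KG44}. The genuinely delicate part is Step 1 together with the bookkeeping in Step 2: spotting the normalization that makes \eqref{KG22} telescope, and then carrying both parities of $n$ through the term-ratio comparison — the factor $\delta(n-1)$ and the floor $\lfloor n/2\rfloor$ in the statement are precisely the two adjustments that make the even-$n$ and odd-$n$ cases land on the same classical series. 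Step 3 itself is the standard Dickson/Lucas evaluation; an equally short alternative is to verify directly that $f(n):=\widetilde U_0(\lfloor n/2\rfloor)$ satisfies $f(n)=f(n-1)-f(n-2)$ with $f(0)=2$ and $f(1)=1$.
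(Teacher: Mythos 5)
Your proof is correct, but it takes a genuinely different route from the paper. The paper obtains Theorem \ref{KG11} as an immediate specialization of its general machinery: it observes that $U_r(k)=\Omega_r\big(k|1,1|n\big)$, invokes the first fundamental theorem of the $\Omega$-sequence (Theorem \ref{k0}), which identifies $\Omega_0\big(\lfloor\frac n2\rfloor|\alpha,\beta|n\big)/\big((n-1)\cdots(n-\lfloor\frac n2\rfloor)\big)$ with $\Psi(\alpha,\beta,n)$ --- a statement proved there through the polynomial identity \eqref{ex00}, the differential relations of Theorems \ref{Aexp1} and \ref{A1}, and the $\lambda$--$\Omega$ comparison of Theorems \ref{FD1} and \ref{H1} --- and then reads off the period-$6$ values of $\Psi(1,1,n)$ from the defining recurrence \eqref{def0}, since $2a-b=1$ gives $\Psi(n+1)=\Psi(n)-\Psi(n-1)$ with $\Psi(0)=2$, $\Psi(1)=1$. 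You bypass all of that: your normalization makes the recurrence telescope, Pascal's rule gives the binomial-sum solution, and your term-ratio (hypergeometric) comparison identifies $\widetilde U_0(\lfloor\frac n2\rfloor)$ with the Waring/Dickson sum $\sum_{k}\frac{n}{n-k}\binom{n-k}{k}(-1)^k=2\cos\frac{n\pi}{3}$, which is exactly the paper's $\Psi(1,1,n)$ as given by \eqref{comp3}; I checked the telescoping step, the induction, the term-ratio computation in both parities, and small cases ($n=3,4$), and they are sound. In effect you re-derive the special case $(\alpha,\beta)=(1,1)$ of Theorem \ref{k0} by hand, and keeping the factor $2\alpha-\beta$ in the telescoped recurrence would even give the general case. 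What the paper's route buys is uniformity --- the single Theorem \ref{k0} simultaneously yields Theorems \ref{KG11Q}, \ref{PtP}, \ref{Theorem of G5} and the other summary results at other points $(\alpha,\beta)$ --- at the cost of the heavier $\Psi$-apparatus; your route is elementary and self-contained for this one statement, its only external input being the standard closed form of $D_n(1,1)$ (or, as you note, a direct verification that the resulting sum satisfies $f(n)=f(n-1)-f(n-2)$, which would need to be written out if you wish to avoid citing the Dickson/Waring identity).
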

	
		\begin{theorem}{}
		\label{KG11Q}
		For any given natural number $n$, we associate the double-indexed polynomial sequence $V_r(k)$, which is defined by
		\begin{equation}
			\label{KG22Q} 
			\begin{aligned}
				V_r(k) &=  +2 \: (n-r-k) \: V_r(k-1) - \:2 \: (n-2r-\delta(n-1)) \: V_{r+1}(k-1), \\ \quad V_r(0) &= 1   \quad   \text{for all} \:\: r.
			\end{aligned}
		\end{equation}
		Then 	
		\begin{equation}
			\label{KG44Q} 
			\frac{V_0( \lfloor{\frac{n}{2}}\rfloor   )}{(n-1)(n-2) \cdots (n - \lfloor{\frac{n}{2}}\rfloor )} 	\:=
			\begin{cases}
				+2	      &   n \equiv \pm 0       \pmod{8} \\
				+1	      &   n \equiv \pm 1       \pmod{8} \\
				\: 0	      &   n \equiv \pm 2       \pmod{8} \\				
				-1        &   n \equiv \pm 3       \pmod{8} \\
				-2	      &   n \equiv \pm 4      \pmod{8} 
			\end{cases}.    
		\end{equation}
	\end{theorem}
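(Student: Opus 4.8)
The plan is to observe that \eqref{KG22Q} is the recurrence of Theorem~\ref{KG11} with the single change that the diagonal term $(n-r-k)V_r(k-1)$ carries an extra factor $2$, so the same machinery used for Theorems~\ref{Theorem of G2fQ} and~\ref{KG11} applies. First I would remove that extra factor by the substitution $V_r(k)=2^{k}W_r(k)$, which converts \eqref{KG22Q} into the monic recurrence
\begin{equation*}
W_r(k)=(n-r-k)\,W_r(k-1)-\bigl(n-2r-\delta(n-1)\bigr)\,W_{r+1}(k-1),\qquad W_r(0)=1 ,
\end{equation*}
so that $\dfrac{V_0(\lfloor n/2\rfloor)}{(n-1)(n-2)\cdots(n-\lfloor n/2\rfloor)}=2^{\lfloor n/2\rfloor}\,\dfrac{W_0(\lfloor n/2\rfloor)}{(n-1)(n-2)\cdots(n-\lfloor n/2\rfloor)}$, and it suffices to evaluate the right-hand factor, whose recurrence is of the same type as those already handled.

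The core step is a closed form for the $W$-ratio, established by induction on $k$. Writing $i=\sqrt{-1}$, the claim is
\begin{equation*}
\frac{W_0(\lfloor n/2\rfloor)}{(n-1)(n-2)\cdots(n-\lfloor n/2\rfloor)}=\Bigl(\tfrac{1+i}{2}\Bigr)^{\!n}+\Bigl(\tfrac{1-i}{2}\Bigr)^{\!n},
\end{equation*}
equivalently, that after dividing $W_0(\lfloor n/2\rfloor)$ by the falling factorial one recovers the truncated Chebyshev expansion of $2\cos(n\theta)$ when $n$ is even and of $\cos(n\theta)/\cos\theta$ when $n$ is odd, evaluated at $2\cos\theta=\sqrt2$; the correction term $\delta(n-1)$ is exactly what makes the truncated sum close up at its top term and selects the parity-appropriate one of these two classical identities. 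Granting this, one gets immediately
\begin{equation*}
\frac{V_0(\lfloor n/2\rfloor)}{(n-1)(n-2)\cdots(n-\lfloor n/2\rfloor)}=\frac{(1+i)^{n}+(1-i)^{n}}{2^{\lceil n/2\rceil}}=2^{\,1-\delta(n)/2}\cos\!\Bigl(\tfrac{n\pi}{4}\Bigr),
\end{equation*}
the numerator $(1+i)^{n}+(1-i)^{n}$ being the Lucas sequence of the second kind with parameters $(2,2)$, i.e.\ $2,\,2,\,0,\,-4,\,-8,\,\dots$ This already shows the ratio is an integer, which is part of the assertion. One may alternatively read this identity off the general Omega-sequence evaluation specialized to the parameters of \eqref{KG22Q}.

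It remains to reduce modulo $8$. Since $(1+i)^{8}=(2i)^{4}=16=2^{4}$ and $2^{\lceil (n+8)/2\rceil}=2^{4}\cdot 2^{\lceil n/2\rceil}$, the quantity $\dfrac{(1+i)^{n}+(1-i)^{n}}{2^{\lceil n/2\rceil}}$ depends only on $n\bmod 8$. Evaluating at $n=0,1,\dots,7$ --- using $(1+i)^{2}+(1-i)^{2}=2i-2i=0$ for $n\equiv\pm2$, and noting that $n$ and $8-n$ give the same value --- produces the five entries $+2,+1,0,-1,-2$ listed in \eqref{KG44Q}.

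I expect the inductive step for the closed form to be the only real difficulty: the passage $k-1\to k$ reduces to a Chu--Vandermonde--type convolution among binomial coefficients and falling factorials, and the accounting is delicate because $\delta(n-1)$ together with the parity of $n$ changes both the number of surviving terms ($\lfloor n/2\rfloor+1$ of them) and the form of the leading term, forcing the even-$n$ and odd-$n$ cases to be carried in parallel. The surrounding steps --- the substitution $V_r(k)=2^{k}W_r(k)$, the identification with a normalized Lucas value, and the period-$8$ reduction --- are routine.
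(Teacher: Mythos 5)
Your normalization and endgame are fine: putting $V_r(k)=2^{k}W_r(k)$ is legitimate (equivalently, \eqref{KG22Q} is exactly the paper's $\Omega_r(k|1,0|n)$, since $2\zeta-\xi=2$ and $2\zeta=2$ at $(\zeta,\xi)=(1,0)$), the closed value $\frac{(1+i)^n+(1-i)^n}{2^{\lceil n/2\rceil}}=\Psi(1,0,n)$ is the right one, and your period-$8$ evaluation producing the five entries of \eqref{KG44Q} is correct. But the pivotal claim --- that $W_0(\lfloor n/2\rfloor)$ divided by $(n-1)(n-2)\cdots(n-\lfloor n/2\rfloor)$ equals $\bigl(\tfrac{1+i}{2}\bigr)^{n}+\bigl(\tfrac{1-i}{2}\bigr)^{n}$ --- is precisely the first fundamental theorem of the $\Omega$-sequence (Theorem~\ref{k0}) specialized to this point, and you do not prove it: you defer it to an unspecified ``induction on $k$'' via a Chu--Vandermonde-type convolution and yourself flag the accounting as the real difficulty. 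That is where essentially all of the content of the theorem lives; in the paper it consumes the chain Theorem~\ref{exp1} $\to$ \ref{A1} $\to$ \ref{FD1} $\to$ \ref{H1} $\to$ \ref{F11} $\to$ \ref{k0} (the two-parameter $\Psi$-expansion, the differential recurrences for the coefficients $\lambda_r(k)$, and the identification of $\lambda_r(k)$ with $\Omega_r(k)$ up to factorial prefactors).

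Moreover, the induction as you set it up cannot close in the stated form: the recurrence couples $W_r(k)$ to $W_{r+1}(k-1)$, so a closed form for $W_0(k)$ alone is not an inductive invariant --- you need a two-index statement valid for all $(r,k)$ with $r+k\le\lfloor n/2\rfloor$ (the analogue of Theorem~\ref{H1}), and no such hypothesis is formulated; your parenthetical about parity cases does not supply it. Your passing remark that the identity ``may be read off the general Omega-sequence evaluation'' is in fact the paper's actual proof (identify $V_r(k)=\Omega_r(k|1,0|n)$, apply Theorem~\ref{k0}, and compute $\Psi(1,0,n)$ from \eqref{def0} or \eqref{comp1}, which is periodic with period $8$), but as a self-contained argument your write-up leaves that central identity asserted rather than established, so the proposal has a genuine gap.
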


	\begin{theorem}{}
		\label{PtP}
		For any given natural number $n$, we associate the double-indexed polynomial sequence $W_r(k)$, which is defined by
		\begin{equation}
			\label{GG1-1} 
			\begin{aligned}
				W_r(k) &=  3 \: (n-r-k) \: W_r(k-1) - \:2 \: (n-2r-\delta(n-1)) \: W_{r+1}(k-1), \\ \:\: W_r(0) &= 1   \   \text{for all} \:\: r.
			\end{aligned}
		\end{equation}
		Then 		
		\begin{equation}
			\label{PPP1} 
			\: 	\frac{W_0( \lfloor{\frac{n}{2}}\rfloor   )}{(n-1)(n-2) \cdots (n - \lfloor{\frac{n}{2}}\rfloor )} 	 \:=
			\begin{cases}
				+2	      &   n \equiv \pm 0       \pmod{12} \\
				+1	      &  n \equiv \pm 1 , \pm 2      \pmod{12} \\
				\:\:0        &   n \equiv \pm 3       \pmod{12} \\
				-1        &  n \equiv \pm 4, \pm 5       \pmod{12} \\
				-2	      &   n \equiv \pm 6  \pmod{12}
			\end{cases}.  
		\end{equation}
	\end{theorem}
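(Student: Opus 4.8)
\emph{Proof proposal.} Write $m:=\lfloor n/2\rfloor$ and $\varepsilon:=\delta(n-1)$, so $\varepsilon=1$ when $n$ is even and $\varepsilon=0$ when $n$ is odd; note every factor $n-v$ with $1\le v\le m$ is at least $\lceil n/2\rceil\ge1$, so the denominator in \eqref{PPP1} never vanishes for $n\ge1$. The plan has three movements: \emph{(i)} unfold the two-term recursion \eqref{GG1-1} into a sum over the subsets of $\{1,\dots,m\}$; \emph{(ii)} collapse that sum bijectively into one closed sum; \emph{(iii)} recognize the result, after a short factorial computation, as a Dickson (equivalently Chebyshev) polynomial evaluated at $\sqrt3$, whose values modulo $12$ are exactly the right-hand side of \eqref{PPP1}.

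For \emph{(i)} I would read \eqref{GG1-1} as a process that starts at $(r,k)=(0,m)$ and at each step either keeps $r$ and lowers $k$ by $1$ (picking up the factor $3(n-r-k)$) or raises $r$ by $1$ and lowers $k$ by $1$ (picking up $-2(n-2r-\varepsilon)$), terminating at some $(j,0)$ with $W_j(0)=1$; unfolding (formally, an induction on $m$) then gives
\[
W_0(m)=\sum_{D\subseteq\{1,\dots,m\}}\Bigl(\prod_{k\in D}\bigl(-2(n-2d_k-\varepsilon)\bigr)\Bigr)\Bigl(\prod_{k\in\{1,\dots,m\}\setminus D}3(n-d_k-k)\Bigr),\qquad d_k:=\#\{d\in D:\ d>k\},
\]
with $D$ the set of steps at which $r$ is raised. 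Movement \emph{(ii)} rests on the observation that every summand depends only on $j:=|D|$. Indeed if $D=\{a_1>\dots>a_j\}$ then $d_{a_i}=i-1$, so the first product is $(-2)^j\prod_{i=0}^{j-1}(n-\varepsilon-2i)$; and the map $\rho(k):=k+d_k$ is a strictly increasing bijection of $\{1,\dots,m\}\setminus D$ onto $\{j+1,\dots,m\}$ (strict monotonicity because between two non-members of $D$ there lie strictly fewer members of $D$ than the gap between them, and the endpoints are $\rho(\min)=j+1$, $\rho(\max)=m$ by a one-line count), so the second product is $3^{m-j}\prod_{v=j+1}^{m}(n-v)$. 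Summing over the $\binom mj$ subsets of each size and dividing by $(n-1)(n-2)\cdots(n-m)=\prod_{v=1}^{m}(n-v)$ yields
\[
R(n):=\frac{W_0(m)}{(n-1)(n-2)\cdots(n-m)}=\sum_{j=0}^{m}\binom mj(-2)^j3^{m-j}\,\frac{\prod_{i=0}^{j-1}(n-\varepsilon-2i)}{\prod_{v=1}^{j}(n-v)}.
\]

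For \emph{(iii)}: since $n-\varepsilon=n-\delta(n-1)$ is always odd, $\prod_{i=0}^{j-1}(n-\varepsilon-2i)$ is a descending run of consecutive odd integers and converts into ordinary factorials; carrying this out (the bookkeeping branches mildly on the parity of $n$, and in the odd case one also uses $\frac{n}{n-2j}\binom{n-1-j}{j}=\frac{n}{n-j}\binom{n-j}{j}$) collapses $R(n)$ uniformly to
\[
R(n)=(\sqrt3)^{-\delta(n)}\sum_{j\ge0}\frac{n}{n-j}\binom{n-j}{j}(-1)^j(\sqrt3)^{n-2j}=(\sqrt3)^{-\delta(n)}\,D_n(\sqrt3,1),
\]
where $D_n(\cdot,1)$ is the Dickson polynomial of the first kind, so that $D_n(2\cos\theta,1)=2\cos(n\theta)$. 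Since $\sqrt3=2\cos(\pi/6)$ this reads $R(n)=2\cos(n\pi/6)$ for even $n$ and $R(n)=\tfrac{2}{\sqrt3}\cos(n\pi/6)$ for odd $n$; in particular $R$ has period $12$ with $R(n+6)=-R(n)$, and evaluating these two cosines at $n\equiv0,\pm1,\dots,\pm6\pmod{12}$ reproduces \eqref{PPP1} exactly (integrality being automatic, the values all lying in $\{0,\pm1,\pm2\}$).

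I expect the main obstacle to be movement \emph{(iii)}: conceptually it is just the recognition of a Dickson polynomial, but the factorial/double-factorial manipulation genuinely has to be run through twice, once for each parity of $n$ (the two cases differing only by the harmless prefactor $(\sqrt3)^{-\delta(n)}$, which is precisely what restores integrality for odd $n$). Movements \emph{(i)} and \emph{(ii)} are essentially forced once one notices the bijection $\rho$. The same scheme, with the coefficient $3$ on the $W_r(k-1)$-term replaced by the coefficient appearing there, underlies Theorems~\ref{KG11} and~\ref{KG11Q} as well: it produces $D_n(1,1)=2\cos(n\pi/3)$ and $(\sqrt2)^{-\delta(n)}D_n(\sqrt2,1)$, i.e.\ $2\cos(n\pi/4)$ respectively $\sqrt2\cos(n\pi/4)$, matching their tables.
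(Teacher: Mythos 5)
Your proposal is correct, but it takes a genuinely different route from the paper's. The paper obtains Theorem~\ref{PtP} as a two-line specialization of machinery it has already built: since $2\zeta-\xi=3$ and $2\zeta=2$ for $(\zeta,\xi)=(1,-1)$, one has $W_r(k)=\Omega_r(k|1,-1|n)$, and Theorem~\ref{k0} (the first fundamental theorem, itself proved via the polynomial identity \eqref{ex00}, the differential recurrences of Theorems~\ref{Aexp1} and \ref{A1}, and the $\Omega$-representation of Theorem~\ref{F11}) identifies the ratio in \eqref{PPP1} with $\Psi(1,-1,n)$, whose period-$12$ values \eqref{peroidicity1} are exactly the stated table. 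You instead evaluate $W_0(\lfloor n/2\rfloor)$ directly and self-containedly: the subset expansion of the two-branch recursion is the correct unfolding, your bijection $\rho(k)=k+d_k$ does map the complement of $D$ increasingly onto $\{j+1,\dots,m\}$ (indeed $\rho(k)=j+\#\bigl([1,k]\setminus D\bigr)$), and the term-by-term factorial reduction in movement \emph{(iii)} goes through in both parities --- both sides of the key identity $\binom{m}{j}2^{j}\prod_{i=0}^{j-1}(n-\varepsilon-2i)=\frac{n}{n-j}\binom{n-j}{j}\prod_{v=1}^{j}(n-v)$ equal $n!/\bigl(j!\,(n-2j)!\bigr)$ --- so your closed sum is precisely the paper's explicit formula \eqref{comp3} specialized to $(a,b)=(1,-1)$, i.e.\ $\Psi(1,-1,n)=(\sqrt3)^{-\delta(n)}D_n(\sqrt3,1)=2\cos(n\pi/6)$ up to the odd-parity prefactor, and the trigonometric evaluation reproduces \eqref{PPP1}. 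The trade-off: your argument is elementary and, as you note, the coefficient $3$ plays no role until the last step, so the same computation gives a direct combinatorial proof of Theorem~\ref{k0} itself (and of Theorems~\ref{KG11} and \ref{KG11Q}); the paper's proof is essentially instantaneous but only because it rides on the $\Psi$--$\Omega$ differential framework developed beforehand. Had you wished to shorten movement \emph{(iii)}, you could have cited \eqref{comp3} together with the recurrence \eqref{def0} (which yields the period-$12$ values of $\Psi(1,-1,n)$ directly) instead of passing through Dickson polynomials.
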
		
	
	\begin{theorem}
		\label{Theorem of G5}
		For any given natural number $n$, we associate the double-indexed polynomial sequence $T_r(k)$,  which is defined by
		\begin{equation}
			\label{G5} 
			\begin{aligned}
				T_r(k) &=  4 \: (n-r-k) \: T_r(k-1)  - 2\: (n-2r - \: \delta(n-1)) \: T_{r+1}(k-1),  \\
				T_r(0) &= 1   \quad   \text{for all} \:\: r.
			\end{aligned}
		\end{equation}
	Then
		\begin{equation}
			\label{G5-2} 
			\begin{aligned}
				 \frac{T_0( \lfloor{\frac{n}{2}}\rfloor   )}{(n-1)(n-2) \cdots (n - \lfloor{\frac{n}{2}}\rfloor )} \:=\:    2^{\delta(n-1)}. 	
			\end{aligned}
		\end{equation}
	\end{theorem}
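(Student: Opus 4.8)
The plan is to observe that, for this particular choice of coefficients, the double-indexed sequence $T_r(k)$ is actually \emph{independent of $r$}, which collapses (\ref{G5}) to a one-parameter recursion that is trivial to solve in closed form.

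\textbf{Step 1 ($r$-independence).} I would prove by induction on $k$ that $T_r(k)$ does not depend on $r$; call the common value $c_k$. The case $k=0$ is the initial condition $T_r(0)=1$. If $T_r(k-1)=c_{k-1}$ for every $r$, then substituting into (\ref{G5}),
\[
T_r(k) = \bigl[\,4(n-r-k) - 2(n-2r-\delta(n-1))\,\bigr]\,c_{k-1} = \bigl(2n - 4k + 2\delta(n-1)\bigr)\,c_{k-1},
\]
since the terms linear in $r$ cancel exactly: the leading coefficient $4$ multiplying $(n-r-k)$ is twice the coefficient $2$ multiplying $(n-2r-\delta(n-1))$. (This cancellation is precisely the feature distinguishing Theorem~\ref{Theorem of G5} from Theorems~\ref{KG11}--\ref{PtP}, whose leading coefficients $1,2,3$ do not match $2\cdot 2$, so there the $r$-dependence survives and the answer is more intricate.) Hence $c_0=1$ and $c_k=(2n-4k+2\delta(n-1))\,c_{k-1}$, so $c_k=\prod_{j=1}^{k}\bigl(2n-4j+2\delta(n-1)\bigr)$.

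\textbf{Step 2 (evaluating $c_m$ with $m:=\lfloor\frac n2\rfloor$).} Factoring a $2$ out of each term gives $c_m = 2^m\prod_{j=1}^{m}\bigl(n+\delta(n-1)-2j\bigr)$. The key point is that $n+\delta(n-1)=2\lfloor\frac n2\rfloor+1=2m+1$ regardless of the parity of $n$, so $c_m = 2^m\prod_{j=1}^{m}(2m+1-2j)=2^m\,(2m-1)!! = \dfrac{(2m)!}{m!}$.

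\textbf{Step 3 (dividing by the denominator).} Finally compare $c_m$ with $D:=(n-1)(n-2)\cdots(n-m)$, now splitting on the parity of $n$ (recall $\delta(n-1)=0$ for $n$ odd and $\delta(n-1)=1$ for $n$ even). If $n=2m+1$, then $D=(2m)(2m-1)\cdots(m+1)=\dfrac{(2m)!}{m!}=c_m$, so the ratio equals $1=2^{\delta(n-1)}$. If $n=2m$, then $D=(2m-1)(2m-2)\cdots m=\dfrac{(2m-1)!}{(m-1)!}$, so the ratio equals $\dfrac{(2m)!/m!}{(2m-1)!/(m-1)!}=\dfrac{2m}{m}=2=2^{\delta(n-1)}$; the degenerate case $n=1$ ($m=0$, both products empty) gives ratio $1=2^{\delta(0)}$. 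I do not expect a genuine obstacle here: the entire content is spotting the $r$-independence in Step~1, after which the argument is elementary factorial bookkeeping, and the only care needed is the parity split in Step~3.
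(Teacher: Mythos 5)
Your proof is correct, but it takes a different route from the paper's. The paper proves this theorem by recognizing $T_r(k)=\Omega_r\big(k|1,-2|n\big)$ and then invoking the general machinery: the first fundamental theorem of the $\Omega$-sequence (Theorem~\ref{k0}), which says the ratio in \eqref{G5-2} equals $\Psi(1,-2,n)$, combined with the evaluation $\Psi(1,-2,n)=2^{\delta(n+1)}=2^{\delta(n-1)}$ read off from the recurrence \eqref{def0}; that argument rests on the whole $\Psi$--$\Omega$ correspondence built through Theorems~\ref{exp1}, \ref{A1}, \ref{FD1}, \ref{H1}. You instead prove the statement from scratch: the observation that the $r$-terms cancel (because the coefficient $4$ is twice the coefficient $2$, i.e.\ $\xi=-2\zeta$) collapses the double-indexed recursion to $c_k=(2n-4k+2\delta(n-1))c_{k-1}$, giving the closed form $2^{k}\prod_{\lambda=1}^{k}(n+\delta(n-1)-2\lambda)$, and then elementary factorial bookkeeping with the parity split finishes it. The paper actually derives the same closed form by direct iteration (Theorems~\ref{AU5}--\ref{AU5NM}), but uses it the other way around: there the identity you verify in Step~3 appears as the \emph{consequence} \eqref{AU8} of Theorem~\ref{Theorem of G5} plus the closed form, whereas you prove that factorial identity directly and obtain the theorem without any $\Psi$-theory. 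What your approach buys is a short, self-contained, elementary proof of this one special case; what the paper's approach buys is uniformity, since the same Theorem~\ref{k0} simultaneously yields Theorems~\ref{KG11}, \ref{KG11Q}, \ref{PtP}, \ref{Theorem of G6}, etc., where (as you note) the $r$-dependence does not cancel and your direct method would not apply.
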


	\begin{theorem}{(New representation for Lucas sequence)}
		\label{Theorem of G6}
		For any given natural number $n$, we associate the double-indexed polynomial sequence $H_r(k)$, which is defined by
		\begin{equation}
			\label{G6} 
			\begin{aligned}
				H_r(k) &=   \: (n-r-k) \: H_r(k-1) + 2\: (n-2r - \: \delta(n-1)) \: H_{r+1}(k-1), \\
					H_r(0) &= 1   \quad   \text{for all} \:\: r.
			\end{aligned}
		\end{equation}
		Then 
		\begin{equation}
			\label{G6-2} 
			\begin{aligned}
				L(n) \:= \: \frac{H_0(\lfloor{\frac{n}{2}}\rfloor   )}{(n-1)(n-2) \cdots (n - \lfloor{\frac{n}{2}}\rfloor )}, 	
			\end{aligned}
		\end{equation}
	
		where $L(n)$ is Lucas sequence defined by $L(m+1)=L(m) + L(m-1)$, $L(1)=1$, $L(0)=2$.  	\end{theorem}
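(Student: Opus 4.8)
The plan is to recognize the double-indexed recurrence \eqref{G6} as the "contraction" of a hypergeometric-type sum, exactly in the spirit of the preceding theorems. First I would guess, and then verify by induction on $k$, a closed form for $H_r(k)$ as a finite sum
\begin{equation}
\label{G6-guess}
\begin{aligned}
H_r(k) \;=\; \sum_{j=0}^{k} c_j(r)\,(n-2r-\delta(n-1))^{\,j}\,(n-r-k)(n-r-k+1)\cdots(n-r-1)\big/(\text{something}),
\end{aligned}
\end{equation}
where the coefficients $c_j(r)$ are explicit binomial-type expressions. The cleanest route is to introduce the normalized quantity $\widehat H_r(k) := H_r(k)\big/\big[(n-r-k)(n-r-k+1)\cdots(n-r-1)\big]$ so that \eqref{G6} becomes a pure recursion in the upper index $r$ with no falling-factorial clutter, then iterate it $\lfloor n/2\rfloor$ times down from $r=\lfloor n/2\rfloor$ to $r=0$. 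At $r=\lfloor n/2\rfloor$ the argument $n-2r-\delta(n-1)$ vanishes (for $n$ even it is $n-2\lfloor n/2\rfloor = 0$; for $n$ odd it is $n-2\lfloor n/2\rfloor-1=0$), which kills the second term and gives the base of the descending induction, $H_{\lfloor n/2\rfloor}(k)=(n-\lfloor n/2\rfloor -k)\cdots$, i.e. the "$+2^{\delta}$"-type seed seen in Theorem \ref{Theorem of G5}.

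Next I would unwind the descent to obtain
\begin{equation}
\label{G6-sum}
\begin{aligned}
\frac{H_0(\lfloor n/2\rfloor)}{(n-1)(n-2)\cdots(n-\lfloor n/2\rfloor)}
\;=\; \sum_{r\ge 0} \binom{?}{?}\,2^{?}\,,
\end{aligned}
\end{equation}
and identify the right-hand side with a known hypergeometric representation of $L(n)$. The target identity to land on is the standard binomial formula for Lucas numbers,
\begin{equation}
\label{G6-target}
\begin{aligned}
L(n)\;=\;\sum_{r=0}^{\lfloor n/2\rfloor}\frac{n}{n-r}\binom{n-r}{r}\,,
\end{aligned}
\end{equation}
(equivalently $L(n)=\varphi^n+\psi^n$ with $\varphi,\psi$ the roots of $x^2-x-1$). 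So the heart of the proof is to show that the combinatorial coefficients produced by iterating \eqref{G6} collapse exactly to $\frac{n}{n-r}\binom{n-r}{r}$ after dividing by the falling factorial $(n-1)\cdots(n-\lfloor n/2\rfloor)$. This is the same mechanism that must already power Theorems \ref{Theorem of G2fQ}, \ref{Theorem of G5} — only the multiplier on $H_r(k-1)$ (here $1$, versus $2,3,4,p-r-k$ elsewhere) and the sign of the second term change — so I would set up one master lemma computing $X_0(\lfloor n/2\rfloor)\big/\big[(n-1)\cdots(n-\lfloor n/2\rfloor)\big]$ for a recurrence with generic multiplier $a(n-r-k)$ and generic second coefficient $\pm b(n-2r-\delta(n-1))$, evaluate it as a Gauss-type ${}_2F_1$ at a special argument, and then specialize $(a,b,\text{sign})=(1,2,+)$ to read off $L(n)$.

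The main obstacle I anticipate is purely bookkeeping: correctly tracking how the $\delta(n-1)$ shift interacts with the parity of $n$ when $\lfloor n/2\rfloor$ appears as a summation bound, so that the odd and even cases of $n$ both fold into the single formula \eqref{G6-target} without an off-by-one error in the top term of the sum. I would handle this by treating $n$ even and $n$ odd separately at the level of the closed form \eqref{G6-sum}, checking the boundary term $r=\lfloor n/2\rfloor$ by hand in each parity (where $\binom{n-r}{r}$ equals $1$ when $n$ is even and the $\frac{n}{n-r}$ prefactor is what supplies the correct value of $L(n)-L(n-1)$-type contribution when $n$ is odd), and then invoking the master lemma for the generic interior terms. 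A secondary check is to confirm the base cases $n=0,1$ directly against $L(0)=2$, $L(1)=1$, which also pins down the empty-product and empty-sum conventions in \eqref{G6-2}.
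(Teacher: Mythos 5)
Your plan has a genuine gap, and also a concrete error in the step you propose as the anchor of the induction. You claim that at $r=\lfloor n/2\rfloor$ the coefficient $n-2r-\delta(n-1)$ vanishes, ``for $n$ even it is $n-2\lfloor n/2\rfloor=0$; for $n$ odd it is $n-2\lfloor n/2\rfloor-1=0$.'' This has the parity of $\delta$ backwards: since $\delta(m)=m\bmod 2$, for $n$ even one has $\delta(n-1)=1$ and the coefficient equals $-1$, while for $n$ odd one has $\delta(n-1)=0$ and the coefficient equals $+1$; it never vanishes at $r=\lfloor n/2\rfloor$. Moreover the sequence is only defined for $0\le r+k\le\lfloor n/2\rfloor$, so the values $H_{\lfloor n/2\rfloor}(k)$ with $k\ge 1$ that you want as the base of a descending induction in $r$ lie outside the admissible index range; the natural terminal data are $H_r(0)=1$ and the induction must run on $k$. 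Beyond this, the heart of your argument --- the ``master lemma'' asserting that iterating the recurrence and dividing by $(n-1)(n-2)\cdots(n-\lfloor n/2\rfloor)$ collapses to $\sum_{r=0}^{\lfloor n/2\rfloor}\frac{n}{n-r}\binom{n-r}{r}$, via an unspecified ${}_2F_1$ evaluation --- is exactly the content to be proved and is left as a placeholder (the formulas \eqref{G6-guess} and \eqref{G6-sum} contain ``something'' and ``$\binom{?}{?}2^{?}$''). Your target identity $L(n)=\sum_r\frac{n}{n-r}\binom{n-r}{r}$ is correct, but no mechanism is actually supplied that connects it to $H_0(\lfloor n/2\rfloor)$.

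For comparison, the paper's proof is a two-line specialization of machinery already built: $H_r(k)=\Omega_r\big(k\,|-1,-3\,|\,n\big)$, because with $(\zeta,\xi)=(-1,-3)$ one has $2\zeta-\xi=1$ and $-2\zeta=2$ in Definition \eqref{omegaDef}; the defining recurrence \eqref{def0} with $(a,b)=(-1,-3)$ gives $2a-b=1$, $-a=1$, hence $\Psi(-1,-3,n)=L(n)$; and the first fundamental theorem of the $\Omega$-sequence (Theorem \eqref{k0}) then yields $L(n)=\Omega_0(\lfloor n/2\rfloor|-1,-3|n)/\big((n-1)\cdots(n-\lfloor n/2\rfloor)\big)$. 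Your direct route would in effect have to re-derive that fundamental theorem (or equivalently the explicit formula \eqref{comp3}, which at $(-1,-3)$ is precisely your binomial sum, together with the $\lambda$--$\Omega$ correspondence of Theorem \eqref{H1}) for the special coefficients $(1,+2)$; that is legitimate in principle, but as written the collapse is asserted rather than proved, and the stated induction base is false.
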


	\begin{theorem}{(New representation for Fermat numbers)}
		\label{Theorem of G4}
		For any given natural number $n$, we associate the double-indexed polynomial sequence $F_r(k)$, which is defined by
		\begin{equation}
			\label{G4} 
			\begin{aligned}
				F_r(k) &=   \: \big(2^n-r-k\big) \: F_r(k-1) + 4 \: \big(2^n -2r -1 \big) \: F_{r+1}(k-1),  \\
				F_r(0) &= 1   \quad   \text{for all} \:\: r.
			\end{aligned}
		\end{equation}
		Then the Fermat number $F_{n} = 2^{2^n} + 1 $ can be represented by 
		\begin{equation}
			\label{G4-2} 
			\begin{aligned}
				F_{n}  \:=\: \frac{F_0\big(2^{n-1}   \big)}{\big(2^n-1\big)\big(2^n-2\big) \cdots \big(2^{n-1}\big)}. 	
			\end{aligned}
		\end{equation}
	\end{theorem}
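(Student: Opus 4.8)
The plan is to compute the whole array $F_r(k)$ in closed form, then read off the value at $r=0$, $k=2^{n-1}$ and recognise the result as a Girard--Waring evaluation, exactly parallel to the way Theorem~\ref{Theorem of G2fQ} is proved.

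Write $N:=2^n$ and $P_r(j):=\prod_{i=0}^{j-1}\bigl(N-2r-2i-1\bigr)$. The claim I would establish first, by induction on $k$, is
\[
  F_r(k)\;=\;\sum_{j=0}^{k}\binom{k}{j}\,4^{j}\,P_r(j)\,\frac{(N-r-j-1)!}{(N-r-k-1)!}.
\]
The base case $k=0$ reduces both sides to $1$. For the inductive step one substitutes the formulas for $F_r(k-1)$ and $F_{r+1}(k-1)$ into \eqref{G4}: the multiplier $N-r-k$ turns $\tfrac{(N-r-j-1)!}{(N-r-k)!}$ into $\tfrac{(N-r-j-1)!}{(N-r-k-1)!}$, while the identity $(N-2r-1)\,P_{r+1}(j)=P_r(j+1)$ lets one reindex $j\mapsto j+1$ in the second sum, and then $\binom{k-1}{j}+\binom{k-1}{j-1}=\binom{k}{j}$ closes the induction. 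The conceptual content behind this ansatz --- and the step I expect to be the real obstacle, namely guessing the closed form at all --- is that unrolling \eqref{G4} expresses $F_r(k)$ as a sum over lattice paths in which each step either fixes the row (weight $N-\text{row}-\text{level}$) or raises it by one (weight $4(N-2\cdot\text{row}-1)$), and the product of the ``fix-the-row'' weights along a path \emph{telescopes}: it depends on the path only through the number $j$ of raising steps, collapsing to the single falling factorial above. This closed form also exhibits the stated integrality, once we divide by the falling factorial in \eqref{G4-2}.

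Next, specialise to $r=0$, $k=2^{n-1}$, so that $N-k-1=2^{n-1}-1=k-1$ and the denominator in \eqref{G4-2} equals $(N-1)(N-2)\cdots 2^{n-1}=\dfrac{(N-1)!}{(2^{n-1}-1)!}$. Dividing the closed form by this falling factorial and simplifying the factorials together with $P_0(j)=\prod_{i=0}^{j-1}(N-2i-1)$ --- the only routine computation in the argument --- the $j$-th summand collapses to $\dfrac{N}{N-j}\binom{N-j}{j}2^{j}$, giving
\[
  \frac{F_0\!\left(2^{n-1}\right)}{(2^n-1)(2^n-2)\cdots 2^{n-1}}
  \;=\;\sum_{j=0}^{2^{n-1}}\frac{N}{N-j}\binom{N-j}{j}\,2^{j}.
\]

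Finally, I would invoke the classical Girard--Waring identity $\alpha^{N}+\beta^{N}=\sum_{j}\frac{N}{N-j}\binom{N-j}{j}(-\alpha\beta)^{j}(\alpha+\beta)^{N-2j}$ with $\alpha+\beta=1$ and $\alpha\beta=-2$, i.e.\ $\{\alpha,\beta\}=\{2,-1\}$, the roots of $x^{2}-x-2$. Since $N=2^{n}$ is even, the sum above equals $2^{N}+(-1)^{N}=2^{2^{n}}+1=F_{n}$, which is exactly \eqref{G4-2}. (If the Omega-sequence master identity has been recorded earlier, the last two steps are just its specialisation to the parameter choice of \eqref{G4} together with the evaluation of the relevant Omega value at $2^{2^{n}}+1$.)
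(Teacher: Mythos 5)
Your proposal is correct, but it proves the theorem by a genuinely different route than the paper does. The paper treats this as an immediate specialization of its general machinery: it observes that $F_r(k)=\Omega_r\big(k\,|-2,-5\,|\,2^n\big)$ (since $2\zeta-\xi=1$, $-2\zeta=4$, $\delta(2^n-1)=1$), invokes the first fundamental theorem of the $\Omega$-sequence (Theorem \ref{k0}, itself resting on the chain \ref{exp1}, \ref{A1}, \ref{FD1}, \ref{H1}) to identify the ratio in \eqref{G4-2} with $\Psi(-2,-5,2^n)$, and evaluates $\Psi(-2,-5,2^n)=2^{2^n}+1$ via the sums-of-powers representation \eqref{WW4} with $(x,y)=(2,-1)$. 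You instead bypass the $\Psi$/$\Omega$ framework entirely: you guess and verify by induction an explicit closed form for the whole array $F_r(k)$ (your inductive step is sound --- the multiplier $2^n-r-k$ shifts the factorial quotient, $(N-2r-1)P_{r+1}(j)=P_r(j+1)$ is correct, and Pascal's rule closes it), then divide and reduce the $j$-th term to $\tfrac{N}{N-j}\binom{N-j}{j}2^j$, and finish with the Girard--Waring identity, which is exactly the paper's identity \eqref{00} at $\{x,y\}=\{2,-1\}$ with $N=2^n$ even. So both proofs ultimately rest on the same power-sum identity, but yours is self-contained and elementary, yields an explicit formula for every $F_r(k)$ (not just $F_0(2^{n-1})$), and makes the integrality of the ratio transparent term by term; the paper's proof buys uniformity, since the same Theorem \ref{k0} simultaneously delivers the Mersenne, Lucas, Fibonacci, Chebyshev and Dickson representations. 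One small point of care in your ``routine'' simplification: at the endpoint $j=2^{n-1}$ the intermediate cancellation $\frac{(N/2)!}{(N/2-j)!}\cdot\frac{(N/2-j-1)!}{(N/2-1)!}=\frac{N/2}{N/2-j}$ involves $(-1)!$ formally, so that term should be checked directly (it does equal $2\cdot 2^{N/2}$ on both sides), or the identity should be written with falling factorials to avoid the division; this is cosmetic, not a gap.
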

	\begin{theorem}{(New representation for Fibonacci-Lucas oscillating sequence)}
		\label{Theorem of G7}
		For any given natural number $n$, we associate the double-indexed polynomial sequence $G_r(k)$, which is defined by
		\begin{equation}
			\label{G7} 
			\begin{aligned}
			G_r(k) &=   5 \: (n-r-k) \: G_r(k-1) - 2\: \big(n-2r-\delta(n-1)\big) \: G_{r+1}(k-1),  \\
				G_r(0) &= 1   \quad   \text{for all} \:\: r.
			\end{aligned}
		\end{equation}
		Then 
		\begin{equation}
			\label{G7-2} 
			\begin{aligned}
				\frac{G_0( \lfloor{\frac{n}{2}}\rfloor   )}{(n-1)(n-2) \cdots (n - \lfloor{\frac{n}{2}}\rfloor )} = \begin{cases}
					F(n)	      &   \text{if} \quad n \:\: odd \\
					L(n)	      &  \text{if} \quad n \: \: even  
				\end{cases},	
			\end{aligned}
		\end{equation}
		where $F(n)$ is Fibonacci sequence defined by $F(m+1)=F(m) + F(m-1), F(1)=1, F(0)=0$, and
	$L(n)$ is Lucas sequence defined by $L(m+1)=L(m) + L(m-1)$, $L(1)=1$, $L(0)=2$.	
		 
	\end{theorem}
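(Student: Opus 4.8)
The plan is to reduce everything to a single closed‑form evaluation of the double‑indexed family $G_r(k)$ and then to recognize the answer by the binomial theorem. Write $m=\lfloor n/2\rfloor$ and $R(n):=\dfrac{G_0(m)}{(n-1)(n-2)\cdots(n-m)}$ (for $n\ge 1$). First I would unroll the recurrence \eqref{G7}: iterating it from level $k=m$ down to level $0$ expresses $G_0(m)$ as a sum over monotone lattice paths, where at each level one either keeps $r$ and collects the factor $5\,(n-r-k)$ (a ``horizontal'' step) or passes from $r$ to $r+1$ and collects $-2\,(n-2r-\delta(n-1))$ (a ``diagonal'' step), the path ending at some $G_j(0)=1$. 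The all‑horizontal path alone contributes $5^{m}\,(n-1)\cdots(n-m)$, so after dividing by $(n-1)\cdots(n-m)$ one is left with an explicit finite sum, which I claim equals the right‑hand side of
\[
R(n)\;=\;\frac{1}{2^{\,n-1}}\sum_{\ell=0}^{m}\binom{n}{2\ell}\,5^{\,m-\ell}.
\]
I would prove this by induction on $k$ with $r$ kept as a free parameter (guessing the analogous two‑index formula for $G_r(k)$ from small cases and the path picture, then verifying that it satisfies \eqref{G7}); in particular this makes the divisibility/integrality claim automatic.

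Granting that identity, the rest is mechanical. Put $\phi=\tfrac{1+\sqrt5}{2}$, $\psi=\tfrac{1-\sqrt5}{2}$, so $L(n)=\phi^n+\psi^n$ and $\sqrt5\,F(n)=\phi^n-\psi^n$. In the sum the exponent $n-2\ell$ runs exactly over the integers of $[0,n]$ congruent to $n$ modulo $2$, and $\binom{n}{2\ell}=\binom{n}{n-2\ell}$; pulling out $(\sqrt5)^{\,n-2m}$ (which equals $1$ when $n$ is even and $\sqrt5$ when $n$ is odd) rewrites $\sum_{\ell}\binom{n}{2\ell}5^{\,m-\ell}$ as $(\sqrt5)^{-(n-2m)}\sum_{j\equiv n\,(2)}\binom{n}{j}(\sqrt5)^{\,j}$. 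Applying the binomial theorem to $(1\pm\sqrt5)^n$, the inner sum equals $\tfrac12\big((1+\sqrt5)^n+(1-\sqrt5)^n\big)=2^{\,n-1}L(n)$ for even $n$ and $\tfrac12\big((1+\sqrt5)^n-(1-\sqrt5)^n\big)=2^{\,n-1}\sqrt5\,F(n)$ for odd $n$. Dividing by $2^{\,n-1}$ gives $R(n)=L(n)$ for even $n$ and $R(n)=F(n)$ for odd $n$, which is exactly \eqref{G7-2}.

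An alternative finish, bypassing the explicit sum, is to establish directly that $R$ obeys the three‑term recurrence $R(n)=3R(n-2)-R(n-4)$ for $n\ge 4$ with $R(0)=2,\ R(1)=1,\ R(2)=3,\ R(3)=2$, and then to observe that the even‑index Lucas numbers and the odd‑index Fibonacci numbers are precisely the two bisections satisfying this recurrence (since $\phi^2,\psi^2$ are the roots of $x^2-3x+1$, one has $L(2m)=3L(2m-2)-L(2m-4)$ and $F(2m+1)=3F(2m-1)-F(2m-3)$), with matching initial data. Either way the sole obstacle is the same: proving the closed form (equivalently, the recurrence) for the $n$‑dependent double recursion $G_r(k)$. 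The difficulty is that $n$, the cutoff $\lfloor n/2\rfloor$, and the parity term $\delta(n-1)$ all enter the recursion at once, so there is no free structural passage $n\to n-2$; one must pin down the correct two‑index formula and push a somewhat delicate induction through it, everything afterward being routine. (If the body of the paper proves a general evaluation for recursions of the shape $X_r(k)=a(n-r-k)X_r(k-1)+b(n-2r-\delta(n-1))X_{r+1}(k-1)$, then Theorem~\ref{Theorem of G7} is just the case $(a,b)=(5,-2)$ of it, and only the recognition step above remains.)
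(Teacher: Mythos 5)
The recognition half of your argument is fine: the closed form you write down, $R(n)=2^{-(n-1)}\sum_{\ell}\binom{n}{2\ell}5^{\lfloor n/2\rfloor-\ell}$, is indeed correct (it checks against $n=1,\dots,5$ and against Binet), and your binomial-theorem step from it to $F(n)$/$L(n)$ is routine. The genuine gap is that this closed form is never established, and it is the entire content of the theorem. Your plan is ``guess the analogous two-index formula for $G_r(k)$ from small cases and the path picture, then verify it satisfies \eqref{G7}'', but no candidate formula for $G_r(k)$ is exhibited. Note that $G_r(k)$ is \emph{not} a product of linear factors: in the path expansion the diagonal factors $-2(n-2r'-\delta(n-1))$ commute past everything, but the horizontal factors $5(n-r''-k')$ depend on where the diagonal steps sit, so $G_r(k)$ is a genuine sum over step positions, and the inductive verification of any correct two-index formula requires a nontrivial binomial identity (this is exactly what the paper's Theorems~\ref{FD1} and~\ref{H1} do in general, via the $\lambda_r(k)$ recurrence). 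The same unproved core underlies your alternative finish: as you yourself note, there is no structural passage $n\to n-2$, so the bisection recurrence $R(n)=3R(n-2)-R(n-4)$ is no easier to obtain than the closed form. As it stands, the decisive step is asserted, not proved.

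For comparison, the paper does not evaluate $G_r(k)$ ad hoc at all. It observes that \eqref{G7} is the $\Omega$-recurrence of Definition~\ref{omegaDef} at the point $(\zeta,\xi)=(1,-3)$ (since $2\zeta-\xi=5$ and $-2\zeta=-2$), so $G_r(k)=\Omega_r(k|1,-3|n)$; then the first fundamental theorem of the $\Omega$-sequence (Theorem~\ref{k0}, itself resting on Theorems~\ref{exp1}, \ref{FD1}, \ref{H1}) gives $G_0(\lfloor n/2\rfloor)/\big((n-1)\cdots(n-\lfloor n/2\rfloor)\big)=\Psi(1,-3,n)$, and the Binet-type formula \eqref{comp1} (or the recurrence \eqref{def0}) identifies $\Psi(1,-3,n)$ with $F(n)$ for odd $n$ and $L(n)$ for even $n$ --- which is precisely your recognition computation. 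So the parenthetical remark at the end of your proposal describes the actual situation: given the paper's general evaluation of recursions of this shape, only your last step is needed; without it, your proposal still owes a proof of the two-index formula, and that is where the real work lies.
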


	\section{The Omega sequence}

	\begin{definition}{(The Omega sequence associated with $n$ and a point $(\zeta, \xi )$)   }
		\label{omegaDef} \\
		For any given natural number $n$, and a point $(\zeta, \xi)$, $(\zeta, \xi)\neq (0,0)$, we associate the double-indexed sequence $\Omega_r(k|\zeta, \xi|n)$, where $0 \leq r + k \leq  \lfloor{\frac{n}{2}}\rfloor $, such that 
		\begin{equation}
			\label{G0} 
			\begin{aligned}
	\Omega_r\big(k|\zeta, \xi|n\big) &= (2\zeta-\xi)  \: \big(n-r-k\big) \: \Omega_r\big(k-1|\zeta, \xi|n\big) \\
	& \quad - 2\ \zeta  \: \big(n-2r-\delta(n-1)\big) \: \Omega_{r+1}\big(k-1|\zeta, \xi|n\big),  \\
				\Omega_r\big(0|\zeta, \xi|n\big) &= 1   \quad   \text{for all} \:\: r.
			\end{aligned}
		\end{equation}
	\end{definition}

	In Section~\ref{SS}, we give detailed example to compute the Omega sequence associated with some point. 
	
	\begin{notation}
		For a given point $(\zeta, \xi),$ we put \[   \Omega_r\big(k|\zeta, \xi|n\big) = \Omega_r(k|n). \]
		For a given $n$, and point $(\zeta, \xi),$ we put \[   \Omega_r\big(k|\zeta, \xi|n\big) = \Omega_r(k). \]
		
	\end{notation}
	
		\section{ An illustrative example of the calculations of \texorpdfstring{$\Omega$}{}}
	\label{SS} 
	\subsection*{The Omega sequence associated with $(1,-2)$ }
	We know that Omega sequence associated with $n$ and $(1,-2)$ is given by the recurrence relation 
	\begin{equation}
		\label{AU1} 
		\begin{aligned}
			\Omega_r\big(k|1, -2|n\big) &= 4  \: \big(n-r-k\big) \: \Omega_r\big(k-1|1, -2|n\big) \\ & \quad - 2  \: \big(n-2r-\delta(n-1)\big) \: \Omega_{r+1}\big(k-1|1, -2|n\big),  \\
			\Omega_r\big(0|1, -2|n\big) &= 1 \quad   \text{for all} \:\: r.
		\end{aligned}
	\end{equation}
	Solving \eqref{AU1}, one by one, we get
	\begin{equation}
		\label{AU2} 
		\begin{aligned}
			\Omega_r\big(1|1, -2|n\big) &=  4  \: \big(n-r-1\big) \: \Omega_r\big(0|1, -2|n\big) - 2  \: \big(n-2r-\delta(n-1)\big) \: \Omega_{r+1}\big(0|1, -2|n\big) \\
			&=   4  \: \big(n-r-1\big) \: (1) - 2  \: \big(n-2r-\delta(n-1)\big) \: (1)  \\
			&=  2 \: \big(n+\delta(n-1) - 2\big).  
		\end{aligned}
	\end{equation}
	Continue the process, we get
	\begin{equation}
		\label{AU3} 
		\begin{aligned}
			\Omega_r\big(2|1, -2|n\big) &=  4  \: \big(n-r-2\big) \: \Omega_r\big(1|1, -2|n\big) - 2  \: \big(n-2r-\delta(n-1)\big) \: \Omega_{r+1}\big(1|1, -2|n\big) \\
			&=  2^2 \: \big(n+\delta(n-1) - 2\big)  \: \big(n+\delta(n-1) - 4\big). 
		\end{aligned}
	\end{equation}
	Then again we compute
	\begin{equation}
		\label{AU4} 
		\begin{aligned}
			\Omega_r\big(3|1, -2|n\big) &=  4  \: \big(n-r-3\big) \: \Omega_r\big(2|1, -2|n\big) - 2  \: \big(n-2r-\delta(n-1)\big) \: \Omega_{r+1}\big(2|1, -2|n\big) \\
			&=  2^3 \: \big(n+\delta(n-1) - 2\big)  \: \big(n+\delta(n-1) - 4\big) \: \big(n+\delta(n-1) - 6\big).
		\end{aligned}
	\end{equation}
	
	Finally, we obtain the following result
	\begin{theorem}{}
		\label{AU5}
		The Omega sequence associated with $n$ and the point $(1,-2)$ is given by	
		\begin{equation}
			\label{AU6} 
			\Omega_r\big(k|1, -2|n\big) = \: 2^k \: \prod\limits_{\lambda = 1}^{k}\big(n+\delta(n-1) - 2 \lambda \big).
		\end{equation}
	\end{theorem}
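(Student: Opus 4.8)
The plan is to prove \eqref{AU6} by induction on the upper index $k$, taking full advantage of the feature that the proposed closed form does \emph{not} depend on the lower index $r$. This $r$-independence is the entire point: in the recurrence \eqref{AU1} the value $\Omega_r(k|1,-2|n)$ is built from \emph{two} predecessors, $\Omega_r(k-1|1,-2|n)$ and $\Omega_{r+1}(k-1|1,-2|n)$, and if both happen to equal a common ($r$-free) quantity, the two terms collapse into one and the coefficients combine, with the parts linear in $r$ cancelling.

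For the base case $k=0$, the right-hand side of \eqref{AU6} is the empty product, which equals $1$, matching the initial condition $\Omega_r(0|1,-2|n)=1$ for every $r$. For the inductive step, assume that for some $k\geq 1$ and for all admissible $r$,
\begin{equation*}
	\Omega_r\big(k-1|1,-2|n\big) \:=\: 2^{k-1}\prod_{\lambda=1}^{k-1}\big(n+\delta(n-1)-2\lambda\big) \:=:\: P,
\end{equation*}
so that $P$ is independent of $r$. Substituting into \eqref{AU1} gives $\Omega_r(k|1,-2|n) = P\cdot\big[\,4(n-r-k)-2(n-2r-\delta(n-1))\,\big]$. Expanding the bracket, the $-4r$ from the first term and the $+4r$ from the second term cancel, leaving $2n-4k+2\delta(n-1)=2\big(n+\delta(n-1)-2k\big)$. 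Hence $\Omega_r(k|1,-2|n) = 2\big(n+\delta(n-1)-2k\big)\,P = 2^{k}\prod_{\lambda=1}^{k}\big(n+\delta(n-1)-2\lambda\big)$, which is again independent of $r$; thus the induction hypothesis is re-established at level $k$, and the proof closes.

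I expect no genuine obstacle here. The explicit computations \eqref{AU2}–\eqref{AU4} already reveal the pattern and, in effect, verify the first few instances of the induction; the cancellation of the $r$-linear terms is the mechanism that was implicitly used there. The only point requiring mild care is the bookkeeping with the domain $0\leq r+k\leq\lfloor\frac{n}{2}\rfloor$, so that in the inductive step the predecessors $\Omega_r(k-1|1,-2|n)$ and $\Omega_{r+1}(k-1|1,-2|n)$ invoked by \eqref{AU1} both lie within the range where the sequence is defined; this is automatic since $r+k\leq\lfloor\frac{n}{2}\rfloor$ forces $r+(k-1)\leq\lfloor\frac{n}{2}\rfloor$ and $(r+1)+(k-1)\leq\lfloor\frac{n}{2}\rfloor$.
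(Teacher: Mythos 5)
Your proof is correct and follows essentially the same route as the paper: the paper computes $\Omega_r(1),\Omega_r(2),\Omega_r(3)$ explicitly and infers the pattern, relying implicitly on exactly the cancellation of the $r$-linear terms that you make explicit. Your induction on $k$, using the $r$-independence of the closed form, simply formalizes that computation (and your domain check is the right minor point of care), so there is nothing to add.
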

	Put $r=0$, $ k = \left\lfloor \frac{n}{2} \right\rfloor$ in \eqref{AU6}, we immediately get 	
	\begin{theorem}{}
		\label{AU5N}
		\begin{equation}
			\label{AU6N} 
			\Omega_0\big( \left\lfloor \frac{n}{2} \right\rfloor|1, -2|n\big) = \: 2^{ \left\lfloor \frac{n}{2} \right\rfloor} \: \prod\limits_{\lambda = 1}^{ \left\lfloor \frac{n}{2} \right\rfloor}\big(n+\delta(n-1) - 2 \lambda \big).
		\end{equation}
	\end{theorem}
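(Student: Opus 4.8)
The plan is to prove \eqref{AU6} by induction on $k$, for all $r$ simultaneously. The crucial observation — which I would flag at the outset — is that the claimed closed form $2^k \prod_{\lambda=1}^k \big(n+\delta(n-1)-2\lambda\big)$ does \emph{not} depend on $r$. This is exactly what makes the recurrence \eqref{AU1} collapse: both $\Omega_r(k-1|1,-2|n)$ and $\Omega_{r+1}(k-1|1,-2|n)$ will be equal by the induction hypothesis, so they factor out and the two $r$-dependent coefficients combine into something $r$-free.

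For the base case $k=0$, the product on the right of \eqref{AU6} is empty, hence equals $1$, which matches the initial condition $\Omega_r(0|1,-2|n)=1$ for all $r$. For the inductive step, fix $k\geq 1$ and assume that for every $r$ we have $\Omega_r(k-1|1,-2|n) = 2^{k-1}\prod_{\lambda=1}^{k-1}\big(n+\delta(n-1)-2\lambda\big)$; call this common value $P_{k-1}$. Substituting into \eqref{AU1} gives
\begin{equation*}
\Omega_r\big(k|1,-2|n\big) = \Big[\,4\,(n-r-k) - 2\,\big(n-2r-\delta(n-1)\big)\Big]\, P_{k-1}.
\end{equation*}
The bracket simplifies: $4(n-r-k) - 2(n-2r-\delta(n-1)) = 2n - 4k + 2\delta(n-1) = 2\big(n+\delta(n-1)-2k\big)$, with the $r$-terms cancelling. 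Hence $\Omega_r(k|1,-2|n) = 2\big(n+\delta(n-1)-2k\big)\,P_{k-1} = 2^k \prod_{\lambda=1}^{k}\big(n+\delta(n-1)-2\lambda\big)$, which is independent of $r$ and completes the induction.

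There is essentially no obstacle here beyond bookkeeping; the only thing that could go wrong is the algebra in the bracket, so I would double-check that the coefficient $2\zeta-\xi$ evaluates to $4$ and $-2\zeta$ to $-2$ at $(\zeta,\xi)=(1,-2)$, consistent with \eqref{AU1}, and that the $\delta(n-1)$ term is carried correctly through the simplification. Once \eqref{AU6} is established, Theorem~\ref{AU5N} follows immediately by setting $r=0$ and $k=\lfloor n/2\rfloor$, using that $0 \le 0 + \lfloor n/2 \rfloor \le \lfloor n/2\rfloor$ so the index range in Definition~\ref{omegaDef} is respected.
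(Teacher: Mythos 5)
Your proof is correct and follows essentially the same route as the paper: the paper derives the closed form \eqref{AU6} by iterating the recurrence and observing at each step that the $r$-dependent terms cancel, then specializes to $r=0$, $k=\lfloor n/2\rfloor$ exactly as you do. Your explicit induction on $k$ (with the bracket simplification $4(n-r-k)-2(n-2r-\delta(n-1))=2(n+\delta(n-1)-2k)$) simply formalizes the cancellation the paper leaves implicit, so no new idea is involved.
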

Noting that
\begin{equation}
	\begin{aligned}
		n+\delta(n-1) - 2 \left\lfloor \frac{n}{2} \right\rfloor &= n+\delta(n-1) - 2 \: \: \frac{n-\delta(n)}{2} \\
		&= \delta(n-1) + \delta(n) = 1.
	\end{aligned}
\end{equation}	
Therefore we get the following explicit formula.
		\begin{theorem}{}
		\label{AU5NM}
		\begin{equation}
			\label{AU6NM} 
			\Omega_0\big( \left\lfloor \frac{n}{2} \right\rfloor|1, -2|n\big) = \: 2^{ \left\lfloor \frac{n}{2} \right\rfloor} \: \big(n+\delta(n-1) - 2  \big)\big(n+\delta(n-1) - 4  \big) \dotsc (1).
		\end{equation}
	\end{theorem}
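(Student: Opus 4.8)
The plan is to reduce everything to Theorem~\ref{AU5}, whose closed form $\Omega_r(k|1,-2|n) = 2^k\prod_{\lambda=1}^k(n+\delta(n-1)-2\lambda)$ already carries all of the content; Theorems~\ref{AU5N} and~\ref{AU5NM} are then obtained by specialization plus a short parity computation. For completeness I would first record a proof of Theorem~\ref{AU5} by induction on $k$, carried out uniformly over all admissible pairs $(r,k)$ with $0\le r+k\le\lfloor n/2\rfloor$ at once (induction on $k$ alone, $r$ arbitrary). The base case $k=0$ is exactly the initial condition $\Omega_r(0|1,-2|n)=1$. For the inductive step, substitute the (crucially $r$-independent) formula for $\Omega_r(k-1|1,-2|n)$ and for $\Omega_{r+1}(k-1|1,-2|n)$ into the recurrence~\eqref{AU1}; the two terms share the common factor $2^{k-1}\prod_{\lambda=1}^{k-1}(n+\delta(n-1)-2\lambda)$, and the bracket $4(n-r-k)-2(n-2r-\delta(n-1))$ collapses to $2(n+\delta(n-1)-2k)$, with the variable $r$ cancelling. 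This yields $\Omega_r(k|1,-2|n)=2^k\prod_{\lambda=1}^k(n+\delta(n-1)-2\lambda)$, completing the induction; setting $r=0$, $k=\lfloor n/2\rfloor$ gives Theorem~\ref{AU5N}.

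To pass from Theorem~\ref{AU5N} to Theorem~\ref{AU5NM}, the only thing to check is that the $\lambda=\lfloor n/2\rfloor$ factor of the product equals $1$, so the product terminates at $(1)$ and may be written out as $(n+\delta(n-1)-2)(n+\delta(n-1)-4)\cdots(1)$. This is the computation already displayed just before the statement: since $2\lfloor n/2\rfloor = n-\delta(n)$, we get $n+\delta(n-1)-2\lfloor n/2\rfloor = \delta(n-1)+\delta(n) = 1$, the last equality because exactly one of $n-1,n$ is odd. Moreover $n+\delta(n-1)$ is always odd (it equals $n$ when $n$ is odd and $n+1$ when $n$ is even), so the factors $n+\delta(n-1)-2\lambda$ run through the odd integers from $n+\delta(n-1)-2$ down to $1$ in steps of $2$; hence the written product is unambiguous, and is in fact a double factorial, $(n-2)!!$ for odd $n$ and $(n-1)!!$ for even $n$.

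I do not expect a genuine obstacle here: the statement is essentially bookkeeping on top of Theorem~\ref{AU5}. The two places to be careful are (i) the parity arithmetic — tracking $\delta(n)$ against $\delta(n-1)$ and confirming $2\lfloor n/2\rfloor = n-\delta(n)$ — and (ii) setting up the induction for Theorem~\ref{AU5} over the whole triangular range $0\le r+k\le\lfloor n/2\rfloor$ simultaneously, since the recurrence~\eqref{AU1} couples $\Omega_r$ with $\Omega_{r+1}$ and is only defined there. Once the $r$-independence of the closed form has been observed, the rest is immediate.
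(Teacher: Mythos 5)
Your proposal is correct and follows essentially the same route as the paper: obtain the closed form $\Omega_r(k|1,-2|n)=2^k\prod_{\lambda=1}^{k}(n+\delta(n-1)-2\lambda)$, specialize at $r=0$, $k=\lfloor\frac{n}{2}\rfloor$, and use the identity $n+\delta(n-1)-2\lfloor\frac{n}{2}\rfloor=\delta(n-1)+\delta(n)=1$ to terminate the product at $1$. The only difference is that you supply an explicit induction on $k$ (uniform in $r$) for the closed form, which the paper only infers from computing the first few terms, so your write-up is if anything more complete.
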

	
			\section{Definition and properties of \texorpdfstring{$\Psi(a,b,n)$}{}}
	In \cite{1}, the author of the current paper first introduced  the following definition
	\begin{definition}
		For any given variables $a,b$, $(a,b) \neq (0,0)$, and for any natural number $n$, we define the sequence $\Psi(a,b,n)= \Psi(n),$ by the following recurrence relation
		\begin{equation}
			\begin{aligned}
				\label{def0}
				\Psi(0)=2, \Psi(1)=1,\Psi(n+1)=(2a-b)^{\delta(n)}\Psi(n) - a \Psi(n-1).
			\end{aligned}
		\end{equation}
	\end{definition}

	\subsection{  Computing  \texorpdfstring{{$\Psi(a,b,n)$}}{} }
	
		The polynomials $\Psi(a,b,n)$ enjoy natural arithmetical and also differential properties and unify many well-known polynomials. One of the methods to compute $\Psi-$sequence, see \cite{1}, for details, is the following explicit formula
	\begin{equation}
		\begin{aligned}
			\label{comp1}	 	
			\Psi(a,b,n) = \frac{(2a-b)^{\lfloor{\frac{n}{2}}\rfloor}} {2^n} \left\{  \left( 1 + \sqrt{ \frac{b+2a}{b-2a}} \right)^n + \left( 1 - \sqrt{ \frac{b+2a}{b-2a}} \right)^n \right\}.
		\end{aligned}	 	
	\end{equation} 	
	
	In \cite{1}, using the formal derivation, we proved the following identity
	\begin{equation}
		\label{00}
		x^{n} + y^{n} =\sum_{i=0}^{\left\lfloor \frac{n}{2} \right\rfloor}(-1)^{i} \frac{n}{n-i}  \binom{n-i}{i} (xy)^i (x+y)^{n-2i}.
	\end{equation}
	From \eqref{comp1}, \eqref{00} we get the proof of the following 
	\begin{theorem}
		\label{comp2}
		For any natural number $n$, the following formula is true
		\begin{equation}
			\label{comp3}
			\Psi(a,b,n) =\sum_{i=0}^{\left\lfloor \frac{n}{2} \right\rfloor}\frac{n}{n-i} \binom{n-i}{i} (-a)^i (2a-b)^{\left\lfloor \frac{n}{2} \right\rfloor - i}.
		\end{equation}
	\end{theorem}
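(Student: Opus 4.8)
The plan is to substitute the closed form \eqref{comp1} into the power-sum identity \eqref{00}. Write $\theta = \sqrt{\tfrac{b+2a}{b-2a}}$ and put $x = 1+\theta$, $y = 1-\theta$, so that \eqref{comp1} reads $\Psi(a,b,n) = (2a-b)^{\lfloor n/2\rfloor}\,2^{-n}\,(x^{n}+y^{n})$. The step that makes the whole identity collapse is the elementary evaluation of the two symmetric functions of $x$ and $y$, namely $x+y = 2$ and
\[ xy \;=\; 1 - \frac{b+2a}{b-2a} \;=\; \frac{(b-2a)-(b+2a)}{b-2a} \;=\; \frac{4a}{2a-b}. \]
Substituting these into \eqref{00} gives $x^{n}+y^{n} = \sum_{i=0}^{\lfloor n/2\rfloor}(-1)^{i}\tfrac{n}{n-i}\binom{n-i}{i}\big(\tfrac{4a}{2a-b}\big)^{i}2^{\,n-2i}$.

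What remains is purely formal simplification. Multiplying through by the prefactor $(2a-b)^{\lfloor n/2\rfloor}2^{-n}$, in the $i$-th summand the powers of two combine as $4^{i}\cdot 2^{\,n-2i}\cdot 2^{-n}=1$, the sign and the factor $a^{i}$ merge into $(-a)^{i}$, and the denominator $(2a-b)^{i}$ absorbs $i$ of the $\lfloor n/2\rfloor$ copies of $(2a-b)$ in the prefactor, leaving $(2a-b)^{\lfloor n/2\rfloor-i}$; the coefficient $\tfrac{n}{n-i}\binom{n-i}{i}$ is carried along untouched. This is exactly \eqref{comp3}.

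Strictly speaking, the radical $\theta$ and the denominator $b-2a$ confine the manipulation above to the open locus $\{b\neq 2a\}$, where $x^{n}+y^{n}$ is a genuine symmetric function of $x,y$ and hence a rational function of $x+y$ and $xy$. To conclude in full generality I would invoke a polynomial-identity argument: the recurrence \eqref{def0} shows $\Psi(a,b,n)\in\mathbb{Z}[a,b]$, the right-hand side of \eqref{comp3} is visibly in $\mathbb{Z}[a,b]$, and the two polynomials agree on the Zariski-dense set $\{b\neq 2a\}$, so they coincide identically. The only places needing any care are this density remark and the bookkeeping of the floor function in the exponents; the substantive ingredients, \eqref{comp1} and \eqref{00}, are already in hand, so the argument is short. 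A self-contained alternative avoiding radicals would be to induct on $n$ directly from \eqref{def0}, using the three-term recurrence $\tfrac{n}{n-i}\binom{n-i}{i} = \tfrac{n-1}{n-1-i}\binom{n-1-i}{i} + \tfrac{n-2}{n-1-i}\binom{n-1-i}{i-1}$ for the coefficients and splitting into the cases $n$ even and $n$ odd to absorb the parity factor $(2a-b)^{\delta(n)}$; there the even/odd split is the main nuisance.
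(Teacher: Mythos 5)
Your proposal is correct and follows the paper's own route: the paper proves Theorem \ref{comp2} precisely by combining the closed form \eqref{comp1} with the identity \eqref{00}, which is exactly your substitution $x=1+\sqrt{\tfrac{b+2a}{b-2a}}$, $y=1-\sqrt{\tfrac{b+2a}{b-2a}}$ with $x+y=2$, $xy=\tfrac{4a}{2a-b}$. Your extra remarks (the Zariski-density argument handling $b=2a$ and the optional induction from \eqref{def0}) only flesh out details the paper leaves implicit.
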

	
	\section{Arithmetic differential properties}
	
	\begin{theorem}
		\label{exp1}
		For any natural number $n$, and any real numbers $a,b, \alpha, \beta$, $ \beta a - \alpha b \neq 0 $, there exist unique polynomials in $a,b, \alpha, \beta$ with integer coefficients, that we call
		$ \Psi\left( \begin{array}{cc|r} a & b & n \\ \alpha & \beta & r \end{array} \right)$, that depend only on $a,b, \alpha, \beta, n,$ and $r$, and satisfy the following polynomial identity
		\begin{equation}
			\label{ex00} 
			\begin{aligned}
				(\beta a - \alpha b)^{\lfloor{\frac{n}{2}}\rfloor} \frac{x^n+y^n}{(x+y)^{\delta(n)}}  = \sum_{r=0}^{\lfloor{\frac{n}{2}}\rfloor}
				\Psi\left(\begin{array}{cc|r} a & b & n \\ \alpha & \beta & r \end{array}\right)
				(\alpha x^2 + \beta xy + \alpha y^2)^{\lfloor{\frac{n}{2}}\rfloor -r} (ax^2+bxy+ay^{2})^{r}. 
			\end{aligned}
		\end{equation}
		Moreover  
		\begin{equation}
			\label{ex000} 
			\begin{aligned}
				\Psi\left(\begin{array}{cc|r} a & b & n \\ \alpha & \beta & 0 \end{array} \right) = \Psi(a,b,n), 
			\end{aligned}
		\end{equation}
	and
		\begin{equation}
			\label{ex111}
			\begin{aligned}			
				\quad \quad \quad\Psi\left(\begin{array}{cc|c} a & b & n \\ \alpha & \beta & \lfloor{\frac{n}{2}}\rfloor \end{array} \right) = (-1)^{\lfloor{\frac{n}{2}}\rfloor} \: \Psi(\alpha,\beta,n). 
			\end{aligned}
		\end{equation}
	\end{theorem}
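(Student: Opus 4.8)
The plan is to obtain a single closed-form identity by pushing \eqref{00} through a linear change of variables, and then to extract existence, integrality, uniqueness, and the two boundary values \eqref{ex000}--\eqref{ex111} all from that one identity. \emph{First, reduce to symmetric functions.} Write $s=x+y$, $p=xy$, $u=x^{2}+y^{2}=s^{2}-2p$, so that $s^{2}=u+2p$. In \eqref{00} the exponent $n-2i$ of $(x+y)$ has the same parity as $n$ for every $0\le i\le\lfloor n/2\rfloor$, so dividing by $(x+y)^{\delta(n)}$ leaves only even powers of $s$ and gives, uniformly in the parity of $n$,
\[
\frac{x^{n}+y^{n}}{(x+y)^{\delta(n)}}=\sum_{i=0}^{\lfloor n/2\rfloor}(-1)^{i}\,\frac{n}{n-i}\binom{n-i}{i}\,p^{\,i}\,(u+2p)^{\lfloor n/2\rfloor-i}.
\]
Since $\frac{n}{n-i}\binom{n-i}{i}=\binom{n-i}{i}+\binom{n-i-1}{i-1}\in\mathbb Z$, the right side is a homogeneous degree-$\lfloor n/2\rfloor$ polynomial in $u,p$ with integer coefficients.

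\emph{Next, change variables.} Put $Q_{1}=\alpha u+\beta p$ and $Q_{2}=au+bp$; these are the two quadratic forms of \eqref{ex00} rewritten in $u,p$. As $\beta a-\alpha b\ne0$, this is an invertible linear substitution over $\mathbb Q(a,b,\alpha,\beta)$, with inverse
\[
u=\frac{\beta Q_{2}-bQ_{1}}{\beta a-\alpha b},\qquad p=\frac{aQ_{1}-\alpha Q_{2}}{\beta a-\alpha b},\qquad u+2p=\frac{(2a-b)Q_{1}+(\beta-2\alpha)Q_{2}}{\beta a-\alpha b}.
\]
Substituting into the previous display and clearing the denominator yields the explicit identity
\[
(\beta a-\alpha b)^{\lfloor n/2\rfloor}\frac{x^{n}+y^{n}}{(x+y)^{\delta(n)}}=\sum_{i=0}^{\lfloor n/2\rfloor}(-1)^{i}\frac{n}{n-i}\binom{n-i}{i}(aQ_{1}-\alpha Q_{2})^{i}\big((2a-b)Q_{1}+(\beta-2\alpha)Q_{2}\big)^{\lfloor n/2\rfloor-i}.
\]
Each summand is homogeneous of degree $\lfloor n/2\rfloor$ in $Q_{1},Q_{2}$, so the right side is a $\mathbb Z[a,b,\alpha,\beta]$-combination of the monomials $Q_{1}^{\lfloor n/2\rfloor-r}Q_{2}^{r}$; defining $\Psi\!\left(\begin{array}{cc|c}a&b&n\\\alpha&\beta&r\end{array}\right)$ to be the coefficient of $Q_{1}^{\lfloor n/2\rfloor-r}Q_{2}^{r}$, the identity \eqref{ex00} holds by construction, with integer polynomial coefficients depending only on $a,b,\alpha,\beta,n,r$.

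\emph{Uniqueness} follows because $u=x^{2}+y^{2}$ and $p=xy$ generate the symmetric subring $\mathbb Q[x,y]^{S_{2}}$, which is a polynomial ring, so $u,p$ are algebraically independent over $\mathbb Q$; hence $Q_{1},Q_{2}$, being an invertible $\mathbb Q(a,b,\alpha,\beta)$-linear change of $u,p$, are algebraically independent over $\mathbb Q(a,b,\alpha,\beta)$, so the monomials $Q_{1}^{\lfloor n/2\rfloor-r}Q_{2}^{r}$ are linearly independent there. Thus the coefficients in any identity of the shape \eqref{ex00} are forced, giving uniqueness.

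\emph{Finally, the boundary values} are read off the explicit identity: the coefficient of $Q_{1}^{\lfloor n/2\rfloor}$ (the $Q_{2}^{0}$ part) equals $\sum_{i}(-1)^{i}\frac{n}{n-i}\binom{n-i}{i}a^{i}(2a-b)^{\lfloor n/2\rfloor-i}$, which is $\Psi(a,b,n)$ by Theorem~\ref{comp2}, proving \eqref{ex000}; the coefficient of $Q_{2}^{\lfloor n/2\rfloor}$ equals $\sum_{i}(-1)^{i}\frac{n}{n-i}\binom{n-i}{i}(-\alpha)^{i}(\beta-2\alpha)^{\lfloor n/2\rfloor-i}$, and since $(-1)^{i}(\beta-2\alpha)^{\lfloor n/2\rfloor-i}=(-1)^{\lfloor n/2\rfloor}(2\alpha-\beta)^{\lfloor n/2\rfloor-i}$, this is $(-1)^{\lfloor n/2\rfloor}\sum_{i}\frac{n}{n-i}\binom{n-i}{i}(-\alpha)^{i}(2\alpha-\beta)^{\lfloor n/2\rfloor-i}=(-1)^{\lfloor n/2\rfloor}\Psi(\alpha,\beta,n)$ by Theorem~\ref{comp2}, proving \eqref{ex111}. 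I expect the main point requiring care to be not any single calculation but treating the factor $(x+y)^{\delta(n)}$ uniformly for even and odd $n$ and justifying the passage from an identity in $x,y$ to one in $Q_{1},Q_{2}$ — both handled once everything is moved into $\mathbb Q[u,p]$ in the first step.
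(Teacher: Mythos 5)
Your proof is correct, and its existence--uniqueness part is essentially the paper's own argument in different clothing: your inverse change of variables expressing $p=xy$ and $u+2p=(x+y)^2$ in terms of $Q_1=\alpha u+\beta p$ and $Q_2=au+bp$ is literally the pair of substitution identities \eqref{bex1}--\eqref{bex2} that the paper applies to \eqref{00}, and both proofs settle uniqueness by the algebraic independence of the two quadratic forms when $\beta a-\alpha b\neq 0$ (your reduction to the symmetric coordinates $u,p$ makes that independence claim cleaner to justify). Where you genuinely diverge is in the boundary values \eqref{ex000} and \eqref{ex111}: the paper specializes the identity at the roots of the quadratic forms, taking $x_0=-b+\sqrt{b^2-4a^2},\,y_0=2a$ so that $ax_0^2+bx_0y_0+ay_0^2=0$ (and the analogous point for $q_1$), which isolates the extreme terms and is then matched against the closed form \eqref{comp1} involving surds; you instead read the coefficients of $Q_1^{\lfloor n/2\rfloor}$ and $Q_2^{\lfloor n/2\rfloor}$ directly off your explicit binomial expansion and identify them with $\Psi(a,b,n)$ and $(-1)^{\lfloor n/2\rfloor}\Psi(\alpha,\beta,n)$ via the integer formula \eqref{comp3} of Theorem \eqref{comp2}. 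Your route is more self-contained and purely algebraic: it avoids evaluating at possibly complex points and the implicit nonvanishing of $q_1(x_0,y_0)$ that the paper's specialization needs, and as a byproduct it yields an explicit formula for every coefficient $\Psi\left(\begin{array}{cc|c} a & b & n \\ \alpha & \beta & r\end{array}\right)$, anticipating the expansion the paper only obtains later (Theorem \eqref{FD1}). The paper's specialization trick, on the other hand, is shorter at the extremes and requires no bookkeeping of the binomial expansion.
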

	\begin{proof}
		We first prove the existence and uniqueness of the integer coefficients of \eqref{ex00}.\\ Multiplying \eqref{00} by $\frac{(\beta a - \alpha b)^{\lfloor{\frac{n}{2}}\rfloor}}{(x+y)^{\delta(n)}}$, we get integer coefficients for expansion \eqref{ex00} after substituting  
		\begin{equation}
			\label{bex1}
			\begin{aligned}
				(\beta a-\alpha b)\: (x+y)^2 = (2a-b)(\alpha x^2 + \beta xy + \alpha y^2) + (\beta- 2\alpha)( a x^2 + b xy + a y^2   ), 
			\end{aligned}
		\end{equation}
		\begin{equation}
			\label{bex2}
			\begin{aligned}
				(\beta a-\alpha b) xy = a (\alpha x^2 + \beta xy + \alpha y^2) + (-\alpha)( a x^2 + b xy + a y^2   ).
			\end{aligned}
		\end{equation}
		The uniqueness of the coefficients come from the fact that $ \alpha x^2 + \beta xy + \alpha y^2 $ and $   a x^2 + b xy + a y^2$ are algebraically independent for $ \beta a - \alpha b \neq 0 $. Put $x=x_0=-b + \sqrt{b^2 - 4a^2}, \:y= y_0=2a$, then $ax_0^2 + bx_0 y_0 + a y_0^2 = 0$. It follows that 
		\begin{equation}
			\begin{aligned}
				\label{comp12}	 	
				\Psi\left(\begin{array}{cc|r} a & b & n \\ \alpha & \beta & 0 \end{array} \right) = \frac{(2a-b)^{\lfloor{\frac{n}{2}}\rfloor}} {2^n} \left\{  \left( 1 + \sqrt{ \frac{b+2a}{b-2a}} \right)^n + \left( 1 - \sqrt{ \frac{b+2a}{b-2a}} \right)^n \right\}.
			\end{aligned}	 	
		\end{equation} 	
		From \eqref{comp12} and \eqref{comp1} we get \eqref{ex000}.
		Now put $x=x_1=-\beta + \sqrt{\beta^2 - 4 \alpha^2}, \:y= y_1=2 \alpha$, then $\alpha x_1^2 + \beta x_1 y_1 + \alpha y_1^2 = 0$. Hence  
		\begin{equation}
			\begin{aligned}
				\label{comp123}	 	
				\Psi\left(\begin{array}{cc|c} a & b & n \\ \alpha & \beta & \lfloor{\frac{n}{2}}\rfloor \end{array} \right) = (-1)^{\lfloor{\frac{n}{2}}\rfloor} \: \frac{(2 \alpha -\beta)^{\lfloor{\frac{n}{2}}\rfloor}} {2^n} \left\{  \left( 1 + \sqrt{ \frac{\beta+2\alpha}{\beta-2\alpha}} \right)^n + \left( 1 - \sqrt{ \frac{\beta+2\alpha}{\beta-2\alpha}} \right)^n \right\}.
			\end{aligned}	 	
		\end{equation} 	
		From \eqref{comp123} and \eqref{comp1} we get \eqref{ex111}. This completes the proof.	\end{proof}	
	
	\begin{theorem}
		\label{Aexp1} For $\beta a - \alpha b \neq 0$,
		the polynomials $\Psi_r(n):=\Psi\left(\begin{array}{cc|c}
			a & b & n \\ \alpha & \beta & r \end{array}\right)$ satisfy
		\begin{align}
			\begin{aligned}
				\label{diff1}
				\big(\alpha \frac{{\partial} }{\partial a} +  \beta \frac{{\partial} }{\partial b}\big)\Psi_r(n) &= - (r+1)\Psi_{r+1}(n), \\
				\big(a \frac{{\partial} }{\partial \alpha} +  b \frac{{\partial} }{\partial \beta} \big) \Psi_r(n) &= - \big(\lfloor{\frac{n}{2}}\rfloor - r + 1 \big)\Psi_{r-1}(n).
			\end{aligned}
		\end{align}
	\end{theorem}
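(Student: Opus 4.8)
The plan is to differentiate the defining polynomial identity \eqref{ex00} of Theorem~\ref{exp1}. Abbreviate $m=\lfloor\frac{n}{2}\rfloor$, $D=\beta a-\alpha b$, $P=\alpha x^2+\beta xy+\alpha y^2$, $Q=ax^2+bxy+ay^2$, so that \eqref{ex00} reads $D^m\,\frac{x^n+y^n}{(x+y)^{\delta(n)}}=\sum_{r=0}^m\Psi_r(n)\,P^{m-r}Q^r$. Introduce the two Euler-type operators $\mathcal D_1:=\alpha\partial_a+\beta\partial_b$ and $\mathcal D_2:=a\partial_\alpha+b\partial_\beta$. The first step is to record the elementary identities $\mathcal D_1D=\alpha\cdot\beta+\beta\cdot(-\alpha)=0$, $\mathcal D_1P=0$, $\mathcal D_1Q=\alpha(x^2+y^2)+\beta xy=P$, and, symmetrically, $\mathcal D_2D=0$, $\mathcal D_2Q=0$, $\mathcal D_2P=Q$. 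Since $D^m$ and the quotient $(x^n+y^n)/(x+y)^{\delta(n)}$ are each annihilated by $\mathcal D_1$ and by $\mathcal D_2$, the left-hand side of \eqref{ex00} vanishes under both operators.

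Next, applying $\mathcal D_1$ to the right-hand side via the Leibniz rule and using $\mathcal D_1P=0$, $\mathcal D_1Q=P$ gives $0=\sum_{r=0}^m(\mathcal D_1\Psi_r(n))P^{m-r}Q^r+\sum_{r=1}^m r\,\Psi_r(n)\,P^{m-r+1}Q^{r-1}$. Shifting the index in the second sum ($r\mapsto r+1$) rewrites it as $\sum_{r=0}^{m-1}(r+1)\Psi_{r+1}(n)P^{m-r}Q^r$, so the coefficient of $P^{m-r}Q^r$ in the combined expression equals $\mathcal D_1\Psi_r(n)+(r+1)\Psi_{r+1}(n)$ for $0\le r\le m-1$ and equals $\mathcal D_1\Psi_m(n)$ for $r=m$. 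Because $\beta a-\alpha b\neq0$, the quadratic forms $P$ and $Q$ are algebraically independent — the fact already exploited in the proof of Theorem~\ref{exp1} — hence the monomials $P^{m-r}Q^r$, $0\le r\le m$, are linearly independent over $\mathbb Q(a,b,\alpha,\beta)$, forcing every one of those coefficients to be $0$. This is precisely the first identity $\mathcal D_1\Psi_r(n)=-(r+1)\Psi_{r+1}(n)$; the top case $r=m$ is consistent with the convention $\Psi_{m+1}(n)=0$ because, by \eqref{ex111}, $\Psi_m(n)=(-1)^m\Psi(\alpha,\beta,n)$ does not involve $a$ or $b$ and is therefore killed by $\mathcal D_1$.

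The second identity follows the same way from $\mathcal D_2$: the Leibniz expansion gives $0=\sum_{r=0}^m(\mathcal D_2\Psi_r(n))P^{m-r}Q^r+\sum_{r=0}^{m-1}(m-r)\Psi_r(n)P^{m-r-1}Q^{r+1}$, and shifting the second sum ($r\mapsto r-1$) makes the coefficient of $P^{m-r}Q^r$ equal to $\mathcal D_2\Psi_r(n)+(m-r+1)\Psi_{r-1}(n)$ for $1\le r\le m$ and equal to $\mathcal D_2\Psi_0(n)$ for $r=0$. Linear independence of the $P^{m-r}Q^r$ again forces each coefficient to vanish, yielding $\mathcal D_2\Psi_r(n)=-(\lfloor\frac{n}{2}\rfloor-r+1)\Psi_{r-1}(n)$, with the $r=0$ case matching the convention $\Psi_{-1}(n)=0$ since $\Psi_0(n)=\Psi(a,b,n)$ by \eqref{ex000} involves only $a,b$. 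The one step that genuinely needs care is this linear-independence argument — the $\Psi_r(n)$ are themselves polynomials in $a,b,\alpha,\beta$, so one must argue that they cannot cancel against each other when expanded in the $P^{m-r}Q^r$, and this is exactly where the hypothesis $\beta a-\alpha b\neq0$ (equivalently, the algebraic independence of the two quadratics) is used; everything else is routine differentiation and reindexing.
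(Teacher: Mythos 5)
Your proof is correct and follows essentially the same route as the paper: apply the operators $\alpha\frac{\partial}{\partial a}+\beta\frac{\partial}{\partial b}$ and $a\frac{\partial}{\partial \alpha}+b\frac{\partial}{\partial \beta}$ to the identity \eqref{ex00}, note they annihilate the left-hand side, expand the right-hand side by the Leibniz rule, reindex, and use the independence of the two quadratic forms (valid since $\beta a-\alpha b\neq 0$) to force each coefficient to vanish. Your explicit bookkeeping of the boundary cases $r=\lfloor\frac{n}{2}\rfloor$ and $r=0$ via \eqref{ex111} and \eqref{ex000} is a welcome extra detail, but the argument is the paper's.
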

	
	\begin{proof}
		We differentiate \eqref{ex00} with respect to the particular differential operator 
		\[ 		\big(\alpha \frac{{\partial} }{\partial a} +  \beta \frac{{\partial} }{\partial b} \big).    \]
		Noting that
		\begin{equation}
			(\alpha \frac{{\partial} }{\partial a} +  \beta \frac{{\partial} }{\partial b}) (\beta a - \alpha b)^{\lfloor{\frac{n}{2}}\rfloor} \frac{x^n+y^n}{(x+y)^{\delta(n)}} =0,
		\end{equation}
		we get
		\begin{equation}
			\label{p1}
			\begin{aligned}
				0=\Big(\alpha \frac{{\partial} }{\partial a} & +  \beta \frac{{\partial} }{\partial b} \Big) \sum_{r=0}^{\lfloor{\frac{n}{2}}\rfloor}
				\Psi_r(n) (\alpha x^2 + \beta xy + \alpha y^2)^{\lfloor{\frac{n}{2}}\rfloor -r} (ax^2+bxy+ay^{2})^{r}  \\
				&=  \sum_{r=0}^{\lfloor{\frac{n}{2}}\rfloor}
				(\alpha x^2 + \beta xy + \alpha y^2)^{\lfloor{\frac{n}{2}}\rfloor -r} (ax^2+bxy+ay^{2})^{r}\Big(\alpha \frac{{\partial} }{\partial a} +  \beta \frac{{\partial} }{\partial b} \Big)\Psi_r(n)\\
				&+   \sum_{r=0}^{\lfloor{\frac{n}{2}}\rfloor}
				\Psi_r(n)
				(\alpha x^2 + \beta xy + \alpha y^2)^{\lfloor{\frac{n}{2}}\rfloor -r}\Big(\alpha \frac{{\partial} }{\partial a} +  \beta \frac{{\partial} }{\partial b} \Big) (ax^2+bxy+ay^{2})^{r} \\
				&+  \sum_{r=0}^{\lfloor{\frac{n}{2}}\rfloor}
				\Psi_r(n)
				(ax^2+bxy+ay^{2})^{r}\Big(\alpha \frac{{\partial} }{\partial a} +  \beta \frac{{\partial} }{\partial b} \Big) (\alpha x^2 + \beta xy + \alpha y^2)^{\lfloor{\frac{n}{2}}\rfloor -r}.
			\end{aligned}
		\end{equation}
		Consequently, from \eqref{p1}, we obtain the following desirable polynomial expansion
		\begin{equation}
			\label{p2}
			0 = \sum_{r=0}^{\lfloor{\frac{n}{2}}\rfloor} \Big(	(\alpha \frac{{\partial} }{\partial a} +  \beta \frac{{\partial} }{\partial b})\Psi_r(n) + (r+1)\Psi_{r+1}(n) \Big)  (\alpha x^2 + \beta xy + \alpha y^2)^{\lfloor{\frac{n}{2}}\rfloor -r} (ax^2+bxy+ay^{2})^{r}.
		\end{equation}
		As $ \beta a - \alpha b \neq 0 $, the polynomials $(\alpha x^2 + \beta xy + \alpha y^2)$ and $(a x^2 + b xy + a y^2)$ are algebraically independent which means that all of the coefficients of ~\eqref{p2} must vanish. This means that 
		\begin{align}
			\begin{aligned}
				\big(\alpha \frac{{\partial} }{\partial a} +  \beta \frac{{\partial} }{\partial b}\big)\Psi_r(n) + (r+1)\Psi_{r+1}(n)  = 0 \quad   \text{for all} \:\: r.
			\end{aligned}
		\end{align}
		Similarly, we differentiate \eqref{ex00} with respect to the particular differential operator 
		\[ 		\big(a \frac{{\partial} }{\partial \alpha} +  b \frac{{\partial} }{\partial \beta} \big).    \]
		We get
		\begin{align}
			\begin{aligned}
				\big(a \frac{{\partial} }{\partial \alpha} +  b \frac{{\partial} }{\partial \beta} \big) \Psi_r(n) + \big(\lfloor{\frac{n}{2}}\rfloor - r + 1 \big)\Psi_{r-1}(n) = 0  \quad   \text{for all} \:\: r. 
			\end{aligned}
		\end{align}
	This completes the proof.
	\end{proof}
	This result immediately gives the following desirable theorem
	
	\begin{theorem}
		\label{A1} For $\beta a - \alpha b \neq 0$,
		the polynomials $\Psi\left(\begin{array}{cc|c}
			a & b & n \\ \alpha & \beta & r \end{array}\right)$ satisfy
		\begin{align}
			\begin{aligned}
				\label{diff3}
				\Psi\left(\begin{array}{cc|c}
					a & b & n \\ \alpha & \beta & r \end{array} \right) &=  \frac{(-1)^r}{r!}
				\Big(\alpha \frac{{\partial} }{\partial a} + \beta \frac{{\partial}}{\partial b}\Big)^{r} \Psi(a,b,n),  \quad \qquad
			\end{aligned}
		\end{align}
	and
		\begin{align}
			\begin{aligned}
				\label{diff5}
				\Psi\left( \begin{array}{cc|c}
					a & b & n \\ \alpha & \beta & r \end{array} \right) =  \frac{(-1)^r}{(\lfloor{\frac{n}{2}}\rfloor -r)!} \Big(a \frac{{\partial} }{\partial \alpha} + b \frac{{\partial}}{\partial \beta}\Big)^{\lfloor{\frac{n}{2}}\rfloor-r} \Psi(\alpha,\beta,n).  
			\end{aligned}
		\end{align}
	\end{theorem}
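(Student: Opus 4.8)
The plan is to derive both identities of Theorem~\ref{A1} as immediate consequences of the two first-order recursions in Theorem~\ref{Aexp1}, by iterating them and matching the base cases supplied by Theorem~\ref{exp1}. I would treat the two statements separately but symmetrically.

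For~\eqref{diff3}: fix $a,b,n$ and regard $\Psi_r(n)=\Psi\left(\begin{array}{cc|c} a & b & n \\ \alpha & \beta & r \end{array}\right)$ as a function of $r$. The first line of~\eqref{diff1} says $\big(\alpha\frac{\partial}{\partial a}+\beta\frac{\partial}{\partial b}\big)\Psi_r(n) = -(r+1)\Psi_{r+1}(n)$, so that $\Psi_{r+1}(n) = \frac{-1}{r+1}\big(\alpha\frac{\partial}{\partial a}+\beta\frac{\partial}{\partial b}\big)\Psi_r(n)$. I would prove $\Psi_r(n)=\frac{(-1)^r}{r!}\big(\alpha\frac{\partial}{\partial a}+\beta\frac{\partial}{\partial b}\big)^r\Psi(a,b,n)$ by induction on $r$. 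The base case $r=0$ is exactly~\eqref{ex000}, i.e. $\Psi_0(n)=\Psi(a,b,n)$. For the inductive step, assuming the formula for $r$, apply the recursion and the induction hypothesis:
\begin{equation*}
\Psi_{r+1}(n) = \frac{-1}{r+1}\Big(\alpha\frac{\partial}{\partial a}+\beta\frac{\partial}{\partial b}\Big)\frac{(-1)^r}{r!}\Big(\alpha\frac{\partial}{\partial a}+\beta\frac{\partial}{\partial b}\Big)^r\Psi(a,b,n) = \frac{(-1)^{r+1}}{(r+1)!}\Big(\alpha\frac{\partial}{\partial a}+\beta\frac{\partial}{\partial b}\Big)^{r+1}\Psi(a,b,n),
\end{equation*}
which is the claim for $r+1$. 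Here I use that $\alpha,\beta$ are treated as constants by the operator $\alpha\frac{\partial}{\partial a}+\beta\frac{\partial}{\partial b}$, so it commutes with itself and the scalar $\frac{(-1)^r}{r!}$ pulls out freely.

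For~\eqref{diff5}: now run the second recursion of~\eqref{diff1} downward from the top index. Write $m:=\lfloor n/2\rfloor$ and set $s:=m-r$, so that $\big(a\frac{\partial}{\partial\alpha}+b\frac{\partial}{\partial\beta}\big)\Psi_r(n) = -(m-r+1)\Psi_{r-1}(n)$ becomes, after reindexing, a step that lowers $r$ by one (equivalently raises $s$ by one). The natural base case is $r=m$ (i.e. $s=0$), where~\eqref{ex111} gives $\Psi_m(n)=(-1)^m\Psi(\alpha,\beta,n)$. I would then prove by induction on $s=m-r$ that $\Psi_{m-s}(n)=\frac{(-1)^{m-s}}{s!}\big(a\frac{\partial}{\partial\alpha}+b\frac{\partial}{\partial\beta}\big)^s\Psi(\alpha,\beta,n)$. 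The base case $s=0$ is~\eqref{ex111}; the inductive step uses $\Psi_{r-1}(n) = \frac{-1}{m-r+1}\big(a\frac{\partial}{\partial\alpha}+b\frac{\partial}{\partial\beta}\big)\Psi_r(n)$ with $r=m-s$, so $m-r+1=s+1$, giving the factor $\frac{-1}{s+1}$ that turns $\frac{1}{s!}$ into $\frac{1}{(s+1)!}$ and flips the sign of $(-1)^{m-s}$ to $(-1)^{m-s-1}$. Substituting back $s=m-r$ yields~\eqref{diff5}.

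I do not expect a genuine obstacle here — both identities are formal iterations of Theorem~\ref{Aexp1} anchored at the endpoints identified in Theorem~\ref{exp1}. The only point requiring a word of care is the direction of the recursion: the first operator $\alpha\frac{\partial}{\partial a}+\beta\frac{\partial}{\partial b}$ \emph{raises} the index $r$, so~\eqref{diff3} is built bottom-up from $r=0$; the second operator $a\frac{\partial}{\partial\alpha}+b\frac{\partial}{\partial\beta}$ \emph{lowers} $r$, so~\eqref{diff5} is built top-down from $r=\lfloor n/2\rfloor$, which is why the exponent of the operator is $\lfloor n/2\rfloor - r$ and the prefactorial is $(\lfloor n/2\rfloor - r)!$ rather than $r!$. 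One should also note implicitly that the recursions in~\eqref{diff1} hold \enquote{for all $r$} in the stated range, with the convention that $\Psi_{r}(n)=0$ outside $0\le r\le\lfloor n/2\rfloor$, so that the induction closes cleanly at both ends; this matches the fact that $\big(\alpha\frac{\partial}{\partial a}+\beta\frac{\partial}{\partial b}\big)^{\lfloor n/2\rfloor+1}$ annihilates $\Psi(a,b,n)$, which is a polynomial of degree $\lfloor n/2\rfloor$ in $(2a-b)$ by Theorem~\ref{comp2}.
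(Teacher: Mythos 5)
Your proposal is correct and follows essentially the same route as the paper: the paper obtains Theorem~\ref{A1} precisely as an immediate consequence of the recursions \eqref{diff1} of Theorem~\ref{Aexp1}, anchored at the endpoint values \eqref{ex000} and \eqref{ex111} of Theorem~\ref{exp1}, which is exactly the iteration you carry out, made explicit as an upward induction from $r=0$ and a downward induction from $r=\lfloor\frac{n}{2}\rfloor$.
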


\section{ An illustrative example of the calculations of \texorpdfstring{$\Psi$}{}}
	It is desirable to clarify how we apply the methods of Theorem\eqref{A1} to compute the polynomial coefficients $ \Psi\left( \begin{array}{cc|r} a & b & n \\ \alpha & \beta & r \end{array} \right)$. We choose to compute
	\[\Psi\left( \begin{array}{cc|r} a & b & 4 \\ \alpha & \beta & r \end{array} \right) \quad \mbox{for} \quad r=0,1,2=\left\lfloor \frac{4}{2} \right\rfloor. \]
	Then, from Theorem \eqref{exp1}, we get  \[
	\Psi\left( \begin{array}{cc|c}
		a & b & 4 \\ \alpha & \beta & 0 \end{array} \right) =\Psi(a,b,4)= -2a^2+b^2.   \]
	Hence, from Theorem \eqref{A1}, we get 
	\begin{align*}
		\begin{aligned}
			\Psi\left( \begin{array}{cc|c}
				a & b & 4 \\ \alpha & \beta & 1 \end{array} \right)  &=   \frac{-1}{1}\Big(\alpha \frac{{\partial} }{\partial a} +  \beta \frac{{\partial} }{\partial b} \Big)(-2a^2+b^2)= 4a \alpha - 2b \beta , \\
			\Psi\left( \begin{array}{cc|c}
				a & b & 4 \\ \alpha & \beta & 2 \end{array} \right)  &= \frac{-1}{2}  \Big(\alpha \frac{{\partial} }{\partial a} + \beta \frac{{\partial}}{\partial b} \Big) (4a \alpha - 2b \beta) = - 2 \alpha^2 + \beta^2. \\
		\end{aligned}
	\end{align*}
	Then from Theorem \eqref{exp1} we immediately obtain the following polynomial identity
	\begin{align}
		\begin{aligned}
			\label{G1}
			(\beta a - \alpha b)^2 (x^4 + y^4) &= (-2 a^2 + b^2)(\alpha x^2 +\beta xy + \alpha y^2)^2 \\
			&+ (4a \alpha - 2b \beta)(\alpha x^2 +\beta xy + \alpha y^2)(a x^2 +b xy + a y^2) \\
			&+ (-2 \alpha^2 + \beta^2)(a x^2 + b xy + a y^2)^2.
		\end{aligned}
	\end{align}
	Generally, it is desirable to search for values for the parameters $a, b, \alpha, \beta$ that make the middle term, $ \Psi\left( \begin{array}{cc|c} a & b & 4 \\ \alpha & \beta & 1 \end{array} \right)  $ get vanished. Therefore, we put $ \left( \begin{array}{cc} a & b \\ \alpha & \beta \end{array} \right) = \left( \begin{array}{cc} 1 & 1 \\ 1 & 2 \end{array} \right) $. Then the middle coefficient, $4a \alpha - 2b \beta$, of ~\eqref{G1}, is vanished, and we obtain the following special case for a well-known identity in the history of number theory that is used extensively in the  study of equal sums of like powers and in discovering new formulas for Fibonacci numbers
	\begin{align}
		\begin{aligned}
			x^4 + y^4 + (x+y)^4 = 2 (x^2 + xy + y^2)^2.
		\end{aligned}
	\end{align}
	Volume 2, \cite{Dickson}, attributes this special case to C. B. Haldeman (1905), although Proth (1878) used it in passing.
	\section{The fundamental theorem of \texorpdfstring{$\Psi-$}{} sequence}
	Now, put $ r=\lfloor{\frac{n}{2}}\rfloor  $ in equation \eqref{diff3} of Theorem \eqref{A1}, together with Theorem \eqref{exp1}, we get the following immediate consequence
	
	\begin{theorem}{(The fundamental theorem of $\Psi-$sequence)}\\
		\label{Aexp2} For any numbers $a,b, \alpha, \beta  $, $\beta a - \alpha b \neq 0$, and any natural number $n$, we have
		\begin{align}
			\begin{aligned}
				\frac{1}{(\lfloor{\frac{n}{2}}\rfloor)!} \Big(\alpha \frac{{\partial} }{\partial a} + \beta \frac{{\partial}}{\partial b}\Big)^{\lfloor{\frac{n}{2}}\rfloor} \Psi(a,b,n) = \Psi(\alpha,\beta,n).  
			\end{aligned}
		\end{align}
	\end{theorem}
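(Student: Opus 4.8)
The plan is to obtain this statement as an immediate consequence of the two representations of the ``top'' coefficient $\Psi\left(\begin{array}{cc|c} a & b & n \\ \alpha & \beta & \lfloor{\frac{n}{2}}\rfloor \end{array}\right)$ that were established in Theorems~\ref{exp1} and~\ref{A1}; no new analytic input is required beyond specializing and comparing.

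First I would specialize identity~\eqref{diff3} of Theorem~\ref{A1} to the extreme index $r=\lfloor{\frac{n}{2}}\rfloor$, which gives
\[
\Psi\left(\begin{array}{cc|c} a & b & n \\ \alpha & \beta & \lfloor{\frac{n}{2}}\rfloor \end{array}\right) = \frac{(-1)^{\lfloor{\frac{n}{2}}\rfloor}}{(\lfloor{\frac{n}{2}}\rfloor)!}\Big(\alpha\frac{\partial}{\partial a}+\beta\frac{\partial}{\partial b}\Big)^{\lfloor{\frac{n}{2}}\rfloor}\Psi(a,b,n).
\]
Next I would invoke identity~\eqref{ex111} of Theorem~\ref{exp1}, namely
\[
\Psi\left(\begin{array}{cc|c} a & b & n \\ \alpha & \beta & \lfloor{\frac{n}{2}}\rfloor \end{array}\right) = (-1)^{\lfloor{\frac{n}{2}}\rfloor}\,\Psi(\alpha,\beta,n).
\]
Equating the two right-hand sides and cancelling the common nonzero factor $(-1)^{\lfloor{\frac{n}{2}}\rfloor}$ yields exactly the asserted formula. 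The hypothesis $\beta a-\alpha b\neq 0$ is precisely the one under which both cited results were proved, so the specialization is legitimate with nothing to add.

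There is essentially no obstacle here: the real content was already discharged in the derivations of Theorems~\ref{Aexp1}, \ref{A1}, and~\ref{exp1}---specifically, iterating the differential recursion $(\alpha\frac{\partial}{\partial a}+\beta\frac{\partial}{\partial b})\Psi_r(n)=-(r+1)\Psi_{r+1}(n)$ exactly $\lfloor{\frac{n}{2}}\rfloor$ times, together with evaluating the closed form on the vanishing locus $\alpha x_1^2+\beta x_1y_1+\alpha y_1^2=0$ to identify the boundary coefficient with $(-1)^{\lfloor{\frac{n}{2}}\rfloor}\Psi(\alpha,\beta,n)$. The only point that merits a moment's care is the sign bookkeeping: the factor $(-1)^{\lfloor{\frac{n}{2}}\rfloor}$ occurs with the same exponent on both sides, so it cancels cleanly and does not appear in the final identity.
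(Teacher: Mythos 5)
Your proposal is correct and matches the paper's own derivation exactly: the paper obtains Theorem~\ref{Aexp2} by putting $r=\lfloor{\frac{n}{2}}\rfloor$ in identity~\eqref{diff3} of Theorem~\ref{A1} and combining with identity~\eqref{ex111} of Theorem~\ref{exp1}, cancelling the sign $(-1)^{\lfloor{\frac{n}{2}}\rfloor}$ just as you do.
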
		
		Also, it is useful to deduce the following relations. Replace each $\alpha$ and $\beta$ by $\lambda \alpha$ and $\lambda \beta$ respectively in Theorem \eqref{exp1}, we get the following polynomial identity for any $\lambda$
	\begin{equation*}
		\label{}
		\begin{aligned}
			&(\lambda\beta a - \lambda\alpha b)^{\lfloor{\frac{n}{2}}\rfloor} \frac{x^n+y^n}{(x+y)^{\delta(n)}} = \\
			&\sum_{r=0}^{\lfloor{\frac{n}{2}}\rfloor}
			\Psi\left( \begin{array}{cc|r} a & b & n \\ \lambda\alpha & \lambda\beta & r \end{array} \right)
			(\lambda\alpha x^2 + \lambda\beta xy + \lambda \alpha y^2)^{\lfloor{\frac{n}{2}}\rfloor -r} (ax^2+bxy+ay^{2})^{r}.
		\end{aligned}
	\end{equation*}
	Then
	\begin{equation}
		\label{V1}
		\begin{aligned}
			&(\beta a - \alpha b)^{\lfloor{\frac{n}{2}}\rfloor} \frac{x^n+y^n}{(x+y)^{\delta(n)}} = \\ &\sum_{r=0}^{\lfloor{\frac{n}{2}}\rfloor} \lambda^{ -r}
			\Psi\left( \begin{array}{cc|r} a & b & n \\ \lambda\alpha & \lambda\beta & r \end{array} \right)
			(\alpha x^2 + \beta xy +  \alpha y^2)^{\lfloor{\frac{n}{2}}\rfloor -r} (ax^2+bxy+ay^{2})^{r}.
		\end{aligned}
	\end{equation}
	Comparing \eqref{V1} with \eqref{exp1}, we obtain
	\[     \lambda^{ -r}
	\Psi\left( \begin{array}{cc|r} a & b & n \\ \lambda\alpha & \lambda\beta & r \end{array} \right) =     \Psi\left( \begin{array}{cc|r} a & b & n \\ \alpha & \beta & r \end{array} \right).       \]
	Similarly, we can prove the following useful relations.
	
	\begin{theorem}
		\label{W11}
		For any numbers $a, b, \alpha, \beta, \beta a - \alpha b \neq 0, \lambda, r,  n $,  we get
		\begin{equation}
			\label{W22}
			\begin{aligned}
				\Psi\left( \begin{array}{cc|c}
					a & b & n \\ \lambda \alpha & \lambda \beta & r \end{array} \right)
				&=\lambda^{r} \Psi\left( \begin{array}{cc|c}
					a & b & n \\ \alpha & \beta & r \end{array} \right),  \\  
				\Psi\left( \begin{array}{cc|c}
					\lambda a &\lambda b & n \\ \alpha & \beta & r \end{array} \right)
				&=\lambda^{\lfloor{\frac{n}{2}}\rfloor - r}
				\Psi\left(\begin{array}{cc|c}
					a & b & n \\ \alpha & \beta & r \end{array} \right), \\
				\Psi\left( \begin{array}{cc|c}
					a & b & n \\ \alpha & \beta & r \end{array} \right) &=
				(-1)^{\lfloor{\frac{n}{2}} \rfloor}
				\Psi\left( \begin{array}{cc|c}
					\alpha & \beta  & n \\  a & b &\lfloor{\frac{n}{2}} \rfloor - r \end{array} \right),
				\\
				\text{and}\quad \quad \quad \quad  \quad \quad \quad \quad \quad \quad \quad \quad \\
				\lambda^{\lfloor{\frac{n}{2}}\rfloor}\Psi(a,b,n) &= \Psi(\lambda a,\lambda  b,n).  		
			\end{aligned}
		\end{equation}
	\end{theorem}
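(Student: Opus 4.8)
The plan is to deduce all four identities of Theorem~\ref{W11} from the defining polynomial expansion \eqref{ex00} of Theorem~\ref{exp1}, using its uniqueness clause: since $\alpha x^2+\beta xy+\alpha y^2$ and $ax^2+bxy+ay^2$ are algebraically independent whenever $\beta a-\alpha b\neq 0$, any two expansions of $(\beta a-\alpha b)^{\lfloor n/2\rfloor}\frac{x^n+y^n}{(x+y)^{\delta(n)}}$ of the shape \eqref{ex00} must have the same coefficients. The common template, already applied in the text just above the statement to obtain the first identity, is: substitute scaled or permuted parameters into \eqref{ex00}, pull the resulting scalars out of the powers of the two quadratic forms, divide through by the overall scalar appearing on the left, and match coefficients against the original \eqref{ex00}.

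For the first identity I would simply repeat the computation preceding the theorem: replace $(\alpha,\beta)$ by $(\lambda\alpha,\lambda\beta)$, use $(\lambda\beta a-\lambda\alpha b)^{\lfloor n/2\rfloor}=\lambda^{\lfloor n/2\rfloor}(\beta a-\alpha b)^{\lfloor n/2\rfloor}$ together with $(\lambda\alpha x^2+\lambda\beta xy+\lambda\alpha y^2)^{\lfloor n/2\rfloor-r}=\lambda^{\lfloor n/2\rfloor-r}(\alpha x^2+\beta xy+\alpha y^2)^{\lfloor n/2\rfloor-r}$, divide by $\lambda^{\lfloor n/2\rfloor}$, and compare. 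For the second identity I would do the symmetric substitution $(a,b)\mapsto(\lambda a,\lambda b)$ in \eqref{ex00}: now $(\beta\lambda a-\alpha\lambda b)^{\lfloor n/2\rfloor}=\lambda^{\lfloor n/2\rfloor}(\beta a-\alpha b)^{\lfloor n/2\rfloor}$ and $(\lambda a x^2+\lambda b xy+\lambda a y^2)^{r}=\lambda^{r}(ax^2+bxy+ay^2)^{r}$, so dividing by $\lambda^{\lfloor n/2\rfloor}$ puts the coefficient $\lambda^{\,r-\lfloor n/2\rfloor}\,\Psi\!\big(\begin{smallmatrix}\lambda a&\lambda b&n\\\alpha&\beta&r\end{smallmatrix}\big)$ in front of $(\alpha x^2+\beta xy+\alpha y^2)^{\lfloor n/2\rfloor-r}(ax^2+bxy+ay^2)^{r}$; uniqueness forces this to equal $\Psi\!\big(\begin{smallmatrix}a&b&n\\\alpha&\beta&r\end{smallmatrix}\big)$, which is exactly the stated scaling law. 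The fourth identity $\lambda^{\lfloor n/2\rfloor}\Psi(a,b,n)=\Psi(\lambda a,\lambda b,n)$ is then immediate: it is the $r=0$ case of the second identity combined with \eqref{ex000}, and alternatively it follows directly from the explicit formula \eqref{comp3} of Theorem~\ref{comp2}, since replacing $a\mapsto\lambda a$, $b\mapsto\lambda b$ scales the $i$-th summand by $\lambda^{i}\lambda^{\lfloor n/2\rfloor-i}=\lambda^{\lfloor n/2\rfloor}$.

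For the third identity I would interchange the two rows in \eqref{ex00}, i.e.\ substitute $(a,b)\leftrightarrow(\alpha,\beta)$. The left-hand scalar becomes $(b\alpha-a\beta)^{\lfloor n/2\rfloor}=(-1)^{\lfloor n/2\rfloor}(\beta a-\alpha b)^{\lfloor n/2\rfloor}$, and the two quadratic forms exchange roles, so the right-hand side turns into $\sum_{r}\Psi\!\big(\begin{smallmatrix}\alpha&\beta&n\\a&b&r\end{smallmatrix}\big)(ax^2+bxy+ay^2)^{\lfloor n/2\rfloor-r}(\alpha x^2+\beta xy+\alpha y^2)^{r}$. Re-indexing by $s=\lfloor n/2\rfloor-r$ rewrites this as $\sum_{s}\Psi\!\big(\begin{smallmatrix}\alpha&\beta&n\\a&b&\lfloor n/2\rfloor-s\end{smallmatrix}\big)(\alpha x^2+\beta xy+\alpha y^2)^{\lfloor n/2\rfloor-s}(ax^2+bxy+ay^2)^{s}$; multiplying both sides by $(-1)^{\lfloor n/2\rfloor}$ and comparing with \eqref{ex00} gives, by uniqueness, the asserted relation. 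The only genuinely delicate point anywhere in the argument is the bookkeeping in this last step — the simultaneous row swap, the reindexing $r\mapsto\lfloor n/2\rfloor-r$, and the sign $(-1)^{\lfloor n/2\rfloor}$ — but no new idea is required beyond the uniqueness of the expansion \eqref{ex00}, which we may invoke throughout because the nondegeneracy hypothesis $\beta a-\alpha b\neq 0$ (equivalently $b\alpha-a\beta\neq 0$, and unchanged under the scalings) is preserved by every substitution used.
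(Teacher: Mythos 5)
Your proposal is correct and follows essentially the same route as the paper: the paper derives the first identity by exactly this scaling-substitution into \eqref{ex00} followed by comparison of coefficients (justified by the algebraic independence of the two quadratic forms), and then states that the remaining identities follow "similarly," which is precisely what you carry out. Your explicit bookkeeping for the row-swap identity (the reindexing $r\mapsto\lfloor n/2\rfloor-r$ and the sign $(-1)^{\lfloor n/2\rfloor}$) and the two ways of getting the last identity are correct fillings-in of the details the paper leaves to the reader.
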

	
	\section{The \texorpdfstring{$\Psi-$}{}representation for the  \texorpdfstring{$\Psi-$}{}sequence}
	We now ready to prove the following theorem.
	\begin{theorem}
		\label{Ready00}
		For any $a,b,\alpha,\beta, \theta, n$, $\beta a - \alpha b \neq 0,$ the following identities are true
		\begin{equation}
			\label{Ready1}
			\sum_{r=0}^{\lfloor{\frac{n}{2}}\rfloor}  \Psi\left( \begin{array}{cc|r} a & b & n \\ \alpha & \beta & r \end{array} \right) \theta^r = \Psi(a-\alpha \theta , b - \beta \theta, n).
		\end{equation}
		
	\end{theorem}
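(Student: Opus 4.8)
The plan is to recognize the left-hand side of~\eqref{Ready1} as a terminating directional Taylor expansion and then to substitute the differential representation of Theorem~\ref{A1}. Write $D := \alpha\frac{\partial}{\partial a} + \beta\frac{\partial}{\partial b}$ and, as in Theorem~\ref{Aexp1}, abbreviate $\Psi_r(n) := \Psi\!\left(\begin{array}{cc|c} a & b & n \\ \alpha & \beta & r \end{array}\right)$. The first ingredient is a degree bound: by the explicit formula~\eqref{comp3} of Theorem~\ref{comp2}, every summand of $\Psi(a,b,n)$ has total degree exactly $\lfloor\frac{n}{2}\rfloor$ in the pair $(a,b)$, so $\Psi(a,b,n)$ is a polynomial in $a,b$ of total degree $\lfloor\frac{n}{2}\rfloor$; hence $D^{r}\Psi(a,b,n) = 0$ for every $r > \lfloor\frac{n}{2}\rfloor$. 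This holds with no restriction on the parameters.

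The second, and essentially only analytic, ingredient is the elementary exact Taylor expansion. For $a,b,\alpha,\beta$ fixed, set $\phi(\theta) := \Psi(a - \alpha\theta,\, b - \beta\theta,\, n)$, a polynomial in the single variable $\theta$ of degree at most $\lfloor\frac{n}{2}\rfloor$. By the chain rule $\phi^{(r)}(0) = (-1)^{r}\,(D^{r}\Psi)(a,b,n)$, and since a polynomial equals its Taylor polynomial at $0$ we obtain
\[
\Psi(a - \alpha\theta,\, b - \beta\theta,\, n) \;=\; \sum_{r=0}^{\lfloor n/2\rfloor} \frac{(-\theta)^{r}}{r!}\,(D^{r}\Psi)(a,b,n),
\]
the sum truncating at $\lfloor\frac{n}{2}\rfloor$ by the degree bound of the previous paragraph.

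Finally, assuming $\beta a - \alpha b \neq 0$ so that the polynomials $\Psi_r(n)$ are defined, I would invoke formula~\eqref{diff3} of Theorem~\ref{A1}, which says precisely that $\frac{(-1)^{r}}{r!}(D^{r}\Psi)(a,b,n) = \Psi_r(n)$ for $0 \le r \le \lfloor\frac{n}{2}\rfloor$. Substituting this into the displayed Taylor expansion turns its right-hand side into $\sum_{r=0}^{\lfloor n/2\rfloor}\Psi_r(n)\,\theta^{r}$, which is exactly the right-hand side of~\eqref{Ready1}. This finishes the argument.

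I do not anticipate a genuine obstacle; the only points needing care are bookkeeping ones --- confirming from~\eqref{comp3} that the truncation occurs at $\lfloor\frac{n}{2}\rfloor$, so that the range of summation matches the statement, and that~\eqref{diff3} is available for each such $r$. For a route that stays entirely inside the polynomial-substitution techniques used earlier in the paper, one may alternatively specialize the defining identity~\eqref{ex00} at a root $(x,y)$ of the quadratic form $(a-\alpha\theta)X^{2} + (b-\beta\theta)XY + (a-\alpha\theta)Y^{2}$, e.g.\ $x = -(b-\beta\theta) + \sqrt{(b-\beta\theta)^{2} - 4(a-\alpha\theta)^{2}}$ and $y = 2(a-\alpha\theta)$: then $ax^{2}+bxy+ay^{2} = \theta(\alpha x^{2}+\beta xy+\alpha y^{2})$, so the right-hand side of~\eqref{ex00} collapses to $(\alpha x^{2}+\beta xy+\alpha y^{2})^{\lfloor n/2\rfloor}\sum_{r}\Psi_r(n)\theta^{r}$, while the same specialization of~\eqref{ex00} applied to the pair $(a-\alpha\theta,\, b-\beta\theta)$ collapses to its $r=0$ term; comparing the two, using the invariance $\beta a - \alpha b = \beta(a-\alpha\theta) - \alpha(b-\beta\theta)$ together with~\eqref{ex000}, yields~\eqref{Ready1}, the genericity needed to divide by $\alpha x^{2}+\beta xy+\alpha y^{2}$ being harmless since~\eqref{Ready1} is a polynomial identity.
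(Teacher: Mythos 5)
Your main argument is correct, and it takes a genuinely different route from the paper. The paper proves \eqref{Ready1} by a congruence argument inside the defining identity \eqref{ex00}: it sets $q_1=\alpha x^2+\beta xy+\alpha y^2$, $q_2=ax^2+bxy+ay^2$, $\Lambda_\theta=\theta q_1-q_2$, reduces the expansion modulo $\Lambda_\theta$ (so $q_2\equiv\theta q_1$), compares with the same identity written for the parameters $\alpha\theta-a,\ \beta\theta-b$ (whose second quadratic form is exactly $\Lambda_\theta$, killing all terms with $r\ge 1$), and then uses the sign relation from Theorem \eqref{W11} together with the algebraic independence of $\Lambda_\theta$ and $q_1$ to extract \eqref{Ready1}. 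You instead observe from \eqref{comp3} that $\Psi(a,b,n)$ is homogeneous of degree $\lfloor\frac{n}{2}\rfloor$ in $(a,b)$, so the directional derivative $D=\alpha\frac{\partial}{\partial a}+\beta\frac{\partial}{\partial b}$ annihilates it after $\lfloor\frac{n}{2}\rfloor$ steps, and then identify the left side of \eqref{Ready1} as the exact (terminating) Taylor expansion in $\theta$ of $\Psi(a-\alpha\theta,b-\beta\theta,n)$, converting the Taylor coefficients into the $\Psi_r$ via \eqref{diff3} of Theorem \eqref{A1}; all ingredients precede Theorem \eqref{Ready00}, so there is no circularity, and the hypothesis $\beta a-\alpha b\neq 0$ is used exactly where you say, to have the $\Psi_r$ defined. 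What each approach buys: yours is shorter and makes the truncation at $\lfloor\frac{n}{2}\rfloor$ transparent by a degree count, at the price of invoking the explicit formula \eqref{comp3} and the differential representation; the paper's stays entirely within the polynomial-substitution framework of \eqref{ex00} and needs no derivatives, only the independence of the two quadratic forms. Your closing alternative sketch is essentially the paper's argument in evaluation form (specializing at a root of $-\Lambda_\theta$ rather than reducing modulo it); the paper's congruence formulation is slightly cleaner there, since it avoids the genericity caveat about dividing by $q_1^{\lfloor n/2\rfloor}$ that you have to wave away.
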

	\begin{proof}
		Define $q_1:=\alpha x^2+\beta xy +\alpha y^2$ and $q_2:=a x^2+b xy +a y^2$ and
		\[ \Lambda_\theta :=\theta q_1 - q_2 = (\alpha \theta - a)x^2 + (\beta \theta - b)xy + (\alpha \theta - a)y^2 .\] 	
		From Theorem \eqref{exp1}, we know that
		\[
		(\beta a - \alpha b)^{\lfloor{\frac{n}{2}}\rfloor} \frac{x^n+y^n}{(x+y)^{\delta(n)}} = \sum_{r=0}^{\lfloor{\frac{n}{2}}\rfloor}  \Psi\left( \begin{array}{cc|r} a & b & n \\ \alpha & \beta & r \end{array} \right) (q_1)^{\lfloor{\frac{n}{2}}\rfloor -r} (q_2)^{r}. \]
		As $q_2 \equiv \theta q_1  \pmod{\Lambda_\theta}$, we get
		\begin{equation}
			\label{Ready2}
			(\beta a - \alpha b)^{\lfloor{\frac{n}{2}}\rfloor} \frac{x^n+y^n}{(x+y)^{\delta(n)}} \equiv q_1^{\lfloor{\frac{n}{2}}\rfloor} \sum_{r=0}^{\lfloor{\frac{n}{2}}\rfloor}  \Psi\left( \begin{array}{cc|r} a & b & n \\ \alpha & \beta & r \end{array} \right) \theta^r   \pmod{\Lambda_\theta}.
		\end{equation}
		Replace each of $a,b$ by $\alpha \theta - a, \beta \theta - b$ respectively,  in Theorem \eqref{exp1}, we obtain
		\begin{equation}
			\label{}
			(\beta [\alpha \theta - a] -
			\alpha[\beta \theta - b])^{\lfloor{\frac{n}{2}}\rfloor} \frac{x^n+y^n}{(x+y)^{\delta(n)}} \equiv
			\Psi(\alpha \theta - a, \beta \theta - b, n)  q_1^{\lfloor{\frac{n}{2}}\rfloor} \pmod{\Lambda_\theta}.
		\end{equation}	
		As $\beta [\alpha \theta - a] -
		\alpha[\beta \theta - b] = - (\beta a - \alpha b)$, and noting from Theorem \eqref{W11} that
		\[ (-1)^{\lfloor{\frac{n}{2}}\rfloor} \Psi(\alpha \theta - a, \beta \theta - b, n) =   \Psi(a-\alpha \theta , b - \beta \theta, n), \]
		we immediately get the following congruence
		\begin{equation}
			\label{Ready3}
			(  \beta a - \alpha b  )^{\lfloor{\frac{n}{2}}\rfloor} \frac{x^n+y^n}{(x+y)^{\delta(n)}} \equiv
			\Psi(a-\alpha \theta , b - \beta \theta, n) q_1^{\lfloor{\frac{n}{2}}\rfloor} \pmod{\Lambda_\theta}.
		\end{equation}
		Now, subtracting \eqref{Ready2} and\eqref{Ready3}, we obtain
		\begin{equation}
			\label{Ready4}
			0 \equiv \Big(
			\sum_{r=0}^{\lfloor{\frac{n}{2}}\rfloor}  \Psi\left( \begin{array}{cc|r} a & b & n \\ \alpha & \beta & r \end{array} \right) \theta^r -
			\Psi(a-\alpha \theta , b - \beta \theta, n) \Big) q_1^{\lfloor{\frac{n}{2}}\rfloor} \pmod{\Lambda_\theta}.
		\end{equation}
		As the congruence \eqref{Ready4} is true for any $x,y$, and as $(\beta \theta - b) \alpha -  (\alpha \theta - a) \beta = \beta a - \alpha b \neq 0$, then the binary quadratic forms $\Lambda_\theta$ and $ q_1 $ are algebraically independent. This immediately leads to
		\begin{equation}
			\label{Ready5}
			0= \sum_{r=0}^{\lfloor{\frac{n}{2}}\rfloor}  \Psi\left( \begin{array}{cc|r} a & b & n \\ \alpha & \beta & r \end{array} \right) \theta^r -
			\Psi(a-\alpha \theta , b - \beta \theta, n).
		\end{equation}
		Hence we obtained the proof of \eqref{Ready1}. This completes the proof of Theorem \eqref{Ready00}.  \end{proof}
	\section{Specialization and lifting}
	The following desirable generalization is important
	
	\begin{theorem}
		\label{Ready6}
		For any $a,b,\alpha,\beta, \eta, \xi, n$, $\beta a - \alpha b \neq 0,$ the following identities are true
		\begin{equation}
			\label{Ready66}
			\begin{aligned}
				\sum_{r=0}^{\lfloor{\frac{n}{2}}\rfloor}  \Psi\left( \begin{array}{cc|r} a & b & n \\ \alpha & \beta & r \end{array} \right)  \xi^{\lfloor{\frac{n}{2}}\rfloor  - r}  \eta^r =\Psi(a \xi-\alpha \eta, b \xi - \beta \eta, n).
			\end{aligned}
		\end{equation}
	\end{theorem}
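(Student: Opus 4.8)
The plan is to obtain Theorem~\ref{Ready6} as a homogenized form of Theorem~\ref{Ready00}, the passage being supplied by the scaling identities of Theorem~\ref{W11}. The key observation is that \eqref{Ready66} is the projective closure of \eqref{Ready1}: dividing \eqref{Ready66} through by $\xi^{\lfloor n/2\rfloor}$ and writing $\theta=\eta/\xi$ turns it into \eqref{Ready1}, so all one really has to do is run this substitution backwards while keeping careful track of the powers of $\xi$.

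First I would assume $\xi\neq 0$ and set $\theta:=\eta/\xi$ in Theorem~\ref{Ready00}; this is legitimate since $\theta$ there is a free parameter and the standing hypothesis $\beta a-\alpha b\neq 0$ is untouched. This gives
\[
\sum_{r=0}^{\lfloor n/2\rfloor}\Psi\left(\begin{array}{cc|c} a & b & n \\ \alpha & \beta & r \end{array}\right)\Big(\frac{\eta}{\xi}\Big)^{r}=\Psi\Big(a-\frac{\alpha\eta}{\xi},\ b-\frac{\beta\eta}{\xi},\ n\Big).
\]
Next I multiply both sides by $\xi^{\lfloor n/2\rfloor}$. The left side becomes exactly $\sum_{r}\Psi(\cdots)\,\xi^{\lfloor n/2\rfloor-r}\eta^{r}$, the left side of \eqref{Ready66}. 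For the right side I invoke the last relation of Theorem~\ref{W11}, namely $\lambda^{\lfloor n/2\rfloor}\Psi(a,b,n)=\Psi(\lambda a,\lambda b,n)$, with $\lambda=\xi$ and with $(a,b)$ replaced by $\big(a-\tfrac{\alpha\eta}{\xi},\,b-\tfrac{\beta\eta}{\xi}\big)$; this collapses $\xi^{\lfloor n/2\rfloor}\Psi\big(a-\tfrac{\alpha\eta}{\xi},b-\tfrac{\beta\eta}{\xi},n\big)$ into $\Psi(a\xi-\alpha\eta,\ b\xi-\beta\eta,\ n)$, the right side of \eqref{Ready66}. Hence \eqref{Ready66} holds for every $\xi\neq 0$.

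It remains to dispose of $\xi=0$. Since, by Theorem~\ref{exp1}, each coefficient $\Psi\left(\begin{array}{cc|c} a & b & n \\ \alpha & \beta & r \end{array}\right)$ is a polynomial in $a,b,\alpha,\beta$, both sides of \eqref{Ready66} are polynomials in $\xi$ (with coefficients in $\mathbb{Q}(a,b,\alpha,\beta,\eta)$, the hypothesis $\beta a-\alpha b\neq 0$ ensuring that the $\Psi$-coefficients are well defined); two polynomials in one variable that agree at infinitely many values are equal, so the identity extends to $\xi=0$ as well. Alternatively, at $\xi=0$ one checks directly that the left side reduces, via \eqref{ex111}, to $(-1)^{\lfloor n/2\rfloor}\Psi(\alpha,\beta,n)\,\eta^{\lfloor n/2\rfloor}$, which equals $\Psi(-\alpha\eta,-\beta\eta,n)$ by the same scaling relation. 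I do not anticipate a genuine obstacle: the whole argument is bookkeeping with the homogeneity of the $\Psi$-coefficients, and the two substantive inputs, Theorems~\ref{Ready00} and \ref{W11}, are already available.

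If instead one wanted a proof parallel in spirit to that of Theorem~\ref{Ready00}, one could multiply the defining identity \eqref{ex00} by $\xi^{\lfloor n/2\rfloor}$, rewrite the summand as $(\xi q_1)^{\lfloor n/2\rfloor-r}(\xi q_2)^{r}$ with $q_1=\alpha x^2+\beta xy+\alpha y^2$ and $q_2=ax^2+bxy+ay^2$, reduce modulo the binary quadratic form $\Lambda:=\xi q_2-\eta q_1$ (so that $\xi q_2\equiv\eta q_1$), and compare with \eqref{ex00} applied to the pair $(a\xi-\alpha\eta,\,b\xi-\beta\eta)$, whose associated quadratic form is precisely $\Lambda$ and whose determinant is $\xi(\beta a-\alpha b)$. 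Subtracting the two congruences and using the algebraic independence of $\Lambda$ and $q_1$ (valid when $\xi\neq 0$) yields \eqref{Ready66}, the same $\xi=0$ caveat then being removed by the polynomial-identity argument above.
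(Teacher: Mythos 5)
Your proposal is correct and follows essentially the same route as the paper: substitute $\theta=\eta/\xi$ into Theorem~\ref{Ready00}, multiply by $\xi^{\lfloor n/2\rfloor}$, and absorb the factor using the homogeneity relation of Theorem~\ref{W11}. Your extra care with the $\xi=0$ case (via the polynomial-identity argument) only makes explicit what the paper dismisses with ``without loss of generality.''
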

	\begin{proof}
		Without loss of generality, let $\xi \neq 0$. We obtain the proof by replacing each $\theta$ in equation \eqref{Ready1} of Theorem \eqref{Ready00} by $\frac{\eta}{\xi},$ and multiplying each side by $\xi^{\lfloor{\frac{n}{2}}\rfloor},$ and  noting from Theorem \eqref{W11} that 
		\[\xi^{\lfloor{\frac{n}{2}}\rfloor}\Psi(a-\alpha \frac{\eta}{\xi} , b - \beta \frac{\eta}{\xi}, n) = \Psi(a \xi-\alpha \eta, b \xi - \beta \eta, n). \] 
	\end{proof}
	Replacing $\theta$ by $\pm 1$ in \eqref{Ready1}, we obtain the following desirable special cases
	\begin{theorem}
		\label{sum}
		For any $a,b,\alpha,\beta, n$, $\beta a - \alpha b \neq 0$ the following identities are true
		\begin{align}
			\begin{aligned}
				\label{theta equal 1}
				\centering	 	
				\sum_{r=0}^{\lfloor{\frac{n}{2}}\rfloor}  \Psi\left( \begin{array}{cc|r} a & b & n \\ \alpha & \beta & r \end{array} \right) &= \Psi(a-\alpha , b - \beta , n), \\ 	 	
				\sum_{r=0}^{\lfloor{\frac{n}{2}}\rfloor}  \Psi\left( \begin{array}{cc|r} a & b & n \\ \alpha & \beta & r \end{array} \right) (-1)^{r} &= \Psi(a + \alpha , b + \beta , n). 
			\end{aligned}
		\end{align}
	\end{theorem}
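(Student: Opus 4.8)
The plan is to obtain Theorem~\ref{sum} as an immediate specialization of Theorem~\ref{Ready00}, which has already been proved. Recall that, under the hypothesis $\beta a - \alpha b \neq 0$, Theorem~\ref{Ready00} gives the polynomial identity
\[ \sum_{r=0}^{\lfloor{\frac{n}{2}}\rfloor} \Psi\left( \begin{array}{cc|r} a & b & n \\ \alpha & \beta & r \end{array} \right) \theta^r = \Psi(a-\alpha \theta, b - \beta \theta, n), \]
valid for every value of the free parameter $\theta$. The idea is simply to substitute $\theta = 1$ and then $\theta = -1$.

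First I would put $\theta = 1$. On the left-hand side every $\theta^r$ becomes $1$, so the sum collapses to $\sum_{r=0}^{\lfloor n/2 \rfloor} \Psi\!\left(\begin{array}{cc|r} a & b & n \\ \alpha & \beta & r \end{array}\right)$, while the right-hand side becomes $\Psi(a-\alpha, b-\beta, n)$; this is exactly the first identity of Theorem~\ref{sum}. Next I would put $\theta = -1$, so that $\theta^r = (-1)^r$. The left-hand side is then the alternating sum $\sum_{r=0}^{\lfloor n/2 \rfloor} (-1)^r \Psi\!\left(\begin{array}{cc|r} a & b & n \\ \alpha & \beta & r \end{array}\right)$, and the right-hand side becomes $\Psi(a-\alpha(-1), b-\beta(-1), n) = \Psi(a+\alpha, b+\beta, n)$, which is the second identity.

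The only point meriting a remark is that the standing hypothesis $\beta a - \alpha b \neq 0$ of Theorem~\ref{sum} is precisely the hypothesis needed to invoke Theorem~\ref{Ready00}, and it is imposed on the very same parameters $a,b,\alpha,\beta$, so nothing has to be re-verified after the substitution. (It is also worth observing in passing that $\beta(a \mp \alpha) - \alpha(b \mp \beta) = \beta a - \alpha b \neq 0$, so the arguments $a \mp \alpha$, $b \mp \beta$ on the right-hand sides are themselves admissible inputs for $\Psi$.) Consequently there is no genuine obstacle in this proof: it is a two-line specialization, and all the substantive work was carried out in establishing Theorem~\ref{Ready00}.
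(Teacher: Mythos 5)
Your proposal is correct and is exactly the paper's argument: the paper obtains Theorem~\ref{sum} by substituting $\theta = \pm 1$ into identity \eqref{Ready1} of Theorem~\ref{Ready00}, just as you do. Your added remark that $\beta(a\mp\alpha)-\alpha(b\mp\beta)=\beta a-\alpha b\neq 0$ is a harmless extra observation and changes nothing essential.
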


	\subsection{More generalizations}
	Now we can generalize Theorem \eqref{Ready00} by applying the following specific differential map
	\[ \Big( - \frac{{\partial}}{\partial \theta} \Big)^{k}   \]
	on equation \eqref{Ready1}, and  noting that 
	
	\begin{equation*}
		\begin{aligned}
			\Big( - \frac{{\partial}}{\partial \theta} \Big)^{k} &
			\Psi(a-\alpha \theta , b - \beta \theta, n) 
			= (-1)^{k} (k!) \Psi\left( \begin{array}{cc|r} a-\alpha \theta & b - \beta \theta & n \\ \alpha & \beta & k\end{array} \right).	
		\end{aligned}
	\end{equation*}
	Hence we immediately obtain the following desirable generalization
	\begin{theorem}
		\label{Ready7}
		For any $n,k, a,b,\alpha,\beta, \theta, n$, $\beta a - \alpha b \neq 0,$ the following identity is true
		\begin{equation}
			\label{Ready8}
			\sum_{r=k}^{\lfloor{\frac{n}{2}}\rfloor} \binom{r}{k}  \Psi\left( \begin{array}{cc|r} a & b & n \\ \alpha & \beta & r \end{array} \right) \theta^{r-k} = \Psi\left( \begin{array}{cc|r} a-\alpha \theta  & b-\beta \theta  & n \\ \alpha & \beta & k \end{array} \right).
		\end{equation}
		
	\end{theorem}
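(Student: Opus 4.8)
The plan is to carry out in detail the differentiation sketched just before the statement: apply the operator $\left(-\frac{\partial}{\partial\theta}\right)^{k}$ to identity \eqref{Ready1} of Theorem~\ref{Ready00}, namely
\[
\sum_{r=0}^{\lfloor{\frac{n}{2}}\rfloor}  \Psi\left( \begin{array}{cc|c} a & b & n \\ \alpha & \beta & r \end{array} \right) \theta^r = \Psi(a-\alpha \theta , b - \beta \theta, n),
\]
and then read off \eqref{Ready8} by comparing the two transformed sides. Both sides here are genuine polynomials in $\theta$ — the right-hand side because $\Psi(\,\cdot\,,\,\cdot\,,n)$ is a polynomial in its first two arguments by Theorem~\ref{comp2} — so the iterated differentiation is purely formal and no analytic subtleties arise.

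First I would treat the left-hand side. For each fixed $r$ one has $\left(-\frac{\partial}{\partial\theta}\right)^{k}\theta^{r} = (-1)^{k}\,r(r-1)\cdots(r-k+1)\,\theta^{r-k} = (-1)^{k}\,k!\,\binom{r}{k}\,\theta^{r-k}$, with the understanding that this vanishes when $r<k$. Summing over $r$ then gives
\[
\left(-\frac{\partial}{\partial\theta}\right)^{k}\sum_{r=0}^{\lfloor{\frac{n}{2}}\rfloor}  \Psi\left( \begin{array}{cc|c} a & b & n \\ \alpha & \beta & r \end{array} \right) \theta^r = (-1)^{k}\,k!\sum_{r=k}^{\lfloor{\frac{n}{2}}\rfloor} \binom{r}{k}\,\Psi\left( \begin{array}{cc|c} a & b & n \\ \alpha & \beta & r \end{array} \right) \theta^{r-k}.
\]
Next I would treat the right-hand side by the chain rule. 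Viewing $\Psi(\,\cdot\,,\,\cdot\,,n)$ as a function of two variables, the substitution $a\mapsto a-\alpha\theta$, $b\mapsto b-\beta\theta$ turns $-\frac{\partial}{\partial\theta}$ into $\alpha\frac{\partial}{\partial a}+\beta\frac{\partial}{\partial b}$, and since $\alpha,\beta$ are free of $\theta$ the operator iterates cleanly, yielding
\[
\left(-\frac{\partial}{\partial\theta}\right)^{k}\Psi(a-\alpha\theta,\,b-\beta\theta,\,n) = \left.\left(\alpha\frac{\partial}{\partial a}+\beta\frac{\partial}{\partial b}\right)^{k}\Psi(a,b,n)\right|_{a\mapsto a-\alpha\theta,\ b\mapsto b-\beta\theta}.
\]
Now I apply identity \eqref{diff3} of Theorem~\ref{A1} with $r=k$, i.e.\ $\left(\alpha\frac{\partial}{\partial a}+\beta\frac{\partial}{\partial b}\right)^{k}\Psi(a,b,n) = (-1)^{k}k!\,\Psi\left( \begin{array}{cc|c} a & b & n \\ \alpha & \beta & k \end{array} \right)$, an identity of polynomials in $a,b,\alpha,\beta$ valid whenever $\beta a-\alpha b\neq 0$; it therefore survives the substitution, the non-degeneracy hypothesis being preserved since $\beta(a-\alpha\theta)-\alpha(b-\beta\theta)=\beta a-\alpha b$. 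Hence the right-hand side equals $(-1)^{k}k!\,\Psi\left( \begin{array}{cc|c} a-\alpha\theta & b-\beta\theta & n \\ \alpha & \beta & k \end{array} \right)$.

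Equating the two displays and cancelling the nonzero scalar $(-1)^{k}k!$ gives exactly \eqref{Ready8}. I do not expect a genuine obstacle: the only point that warrants an explicit sentence is the legitimacy of substituting $a-\alpha\theta,\ b-\beta\theta$ into the polynomial identity \eqref{diff3}, which is immediate once one notes that \eqref{diff3} holds identically and that $\beta a-\alpha b$ is invariant under the shift. (One could alternatively differentiate the defining expansion \eqref{ex00} in $\theta$ after the substitution and avoid citing Theorem~\ref{A1}, but routing through \eqref{diff3} is shorter.)
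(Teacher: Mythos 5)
Your proposal is correct and follows essentially the same route as the paper: the paper obtains Theorem \eqref{Ready7} precisely by applying $\big(-\frac{\partial}{\partial\theta}\big)^{k}$ to identity \eqref{Ready1} and using the fact that this derivative of $\Psi(a-\alpha\theta,\,b-\beta\theta,\,n)$ equals $(-1)^{k}k!\,\Psi\left(\begin{array}{cc|c} a-\alpha\theta & b-\beta\theta & n \\ \alpha & \beta & k \end{array}\right)$, which is exactly your chain-rule step via \eqref{diff3}. You merely spell out the details (the differentiation of $\theta^{r}$, the invariance of $\beta a-\alpha b$ under the shift) that the paper leaves implicit.
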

	Again, without loss of generality, let $\xi \neq 0$. By replacing each $\theta$ in \eqref{Ready8} by $\frac{\eta}{\xi}$, and multiplying each side by $\xi^{\lfloor{\frac{n}{2}}\rfloor -k}$, and noting the properties of $\Psi$ of Theorem \eqref{W11} that
		
		\begin{equation*} \xi^{\lfloor{\frac{n}{2}}\rfloor -k} \Psi\left( \begin{array}{cc|r} a-\alpha \frac{\eta}{\xi}  & b-\beta \frac{\eta}{\xi}  & n \\ \alpha & \beta & k \end{array}     \right)  =  \Psi\left( \begin{array}{cc|r} a\xi-\alpha \eta  & b\xi-\beta \eta  & n \\ \alpha & \beta & k \end{array} \right),        		\end{equation*}
		
	we obtain the following generalization for Theorem \eqref{Ready7}
	\begin{theorem}
		\label{Ready9}
		For any $n,k,a,b,\alpha,\beta, \theta, n$, $\beta a - \alpha b \neq 0,$ the following identity is true
		\begin{align}
			\begin{aligned}
				\label{Ready10}
				\sum_{r=k}^{\lfloor{\frac{n}{2}}\rfloor} \binom{r}{k}  \Psi\left( \begin{array}{cc|r} a & b & n \\ \alpha & \beta & r \end{array} \right) \xi^{\lfloor{\frac{n}{2}}\rfloor  - r}  \eta^{r-k} = \Psi\left( \begin{array}{cc|r} a\xi-\alpha \eta  & b\xi-\beta \eta  & n \\ \alpha & \beta & k \end{array} \right).
			\end{aligned}
		\end{align}
	\end{theorem}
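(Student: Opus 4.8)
The plan is to deduce Theorem~\ref{Ready9} from Theorem~\ref{Ready7} by exactly the homogenization device already used to pass from Theorem~\ref{Ready00} to Theorem~\ref{Ready6}, now carried out with the extra index $k$. Since equation~\eqref{Ready8} is a polynomial identity in all the listed variables, and since both sides of \eqref{Ready10} are polynomials --- indeed each is jointly homogeneous of degree $\lfloor \frac{n}{2}\rfloor - k$ in $(\xi,\eta)$ --- it is enough to verify \eqref{Ready10} on the Zariski-dense set $\xi \neq 0$; the remaining locus $\xi = 0$ (which forces $\eta \neq 0$) then follows by continuity of polynomials. So assume $\xi \neq 0$.

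First I would substitute $\theta \mapsto \frac{\eta}{\xi}$ in \eqref{Ready8}, which is legitimate for $\xi\neq 0$ and leaves the summation range $k \le r \le \lfloor\frac{n}{2}\rfloor$ and the binomial coefficients $\binom{r}{k}$ untouched, since the substitution only alters the scalar $\theta$. This gives
\[
\sum_{r=k}^{\lfloor\frac{n}{2}\rfloor} \binom{r}{k}\, \Psi\left(\begin{array}{cc|c} a & b & n \\ \alpha & \beta & r \end{array}\right)\Big(\tfrac{\eta}{\xi}\Big)^{r-k}
= \Psi\left(\begin{array}{cc|c} a-\alpha\tfrac{\eta}{\xi} & b-\beta\tfrac{\eta}{\xi} & n \\ \alpha & \beta & k \end{array}\right).
\]
Next I would multiply both sides by $\xi^{\lfloor\frac{n}{2}\rfloor - k}$. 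On the left, each term contributes $\xi^{\lfloor\frac{n}{2}\rfloor - k}\big(\tfrac{\eta}{\xi}\big)^{r-k} = \xi^{\lfloor\frac{n}{2}\rfloor - r}\eta^{r-k}$, which is precisely the monomial appearing in \eqref{Ready10}. On the right, the second scaling identity of Theorem~\ref{W11} --- scaling the top row $(a,b)$ by $\lambda$ pulls out the factor $\lambda^{\lfloor\frac{n}{2}\rfloor - r}$ --- applied with $\lambda = \xi$ and the index in the $r$-slot equal to $k$, together with $\xi\big(a - \alpha\tfrac{\eta}{\xi}\big) = a\xi - \alpha\eta$ and likewise for the second entry, yields
\[
\xi^{\lfloor\frac{n}{2}\rfloor - k}\, \Psi\left(\begin{array}{cc|c} a-\alpha\tfrac{\eta}{\xi} & b-\beta\tfrac{\eta}{\xi} & n \\ \alpha & \beta & k \end{array}\right)
= \Psi\left(\begin{array}{cc|c} a\xi-\alpha\eta & b\xi-\beta\eta & n \\ \alpha & \beta & k \end{array}\right).
\]
Combining the two displays gives \eqref{Ready10}.

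There is no real analytic obstacle here: this is a one-line computation once Theorems~\ref{Ready7} and~\ref{W11} are in hand, and it is the $k$-parameter refinement of the derivation of Theorem~\ref{Ready6}. The only point needing care is the exponent bookkeeping --- one must confirm that the power $\xi^{\lfloor\frac{n}{2}\rfloor - k}$ pulled through the left-hand sum matches the homogeneity weight $\lambda^{\lfloor\frac{n}{2}\rfloor - r}$ of the scaling lemma \emph{evaluated at} $r = k$, so that both sides emerge homogeneous of the same degree $\lfloor\frac{n}{2}\rfloor - k$ in $(\xi,\eta)$ --- and, separately, that the degenerate case $\xi = 0$ is indeed covered by the polynomial-identity/continuity argument rather than by the substitution itself. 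As an alternative route, one could instead apply $\frac{1}{k!}\big(-\frac{\partial}{\partial\eta}\big)^{k}$ directly to the identity of Theorem~\ref{Ready6} and simplify the right-hand side by the chain rule together with formula~\eqref{diff3} of Theorem~\ref{A1}; the two stray factors of $(-1)^k$ that appear then cancel, again delivering \eqref{Ready10}.
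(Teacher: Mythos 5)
Your argument is correct and is essentially the paper's own proof: substitute $\theta=\frac{\eta}{\xi}$ into \eqref{Ready8}, multiply by $\xi^{\lfloor\frac{n}{2}\rfloor-k}$, and absorb the factor on the right via the scaling identity of Theorem \eqref{W11} with $\lambda=\xi$ at index $k$. Your extra remarks on the $\xi=0$ case and the alternative derivative route only refine the paper's brief ``without loss of generality, let $\xi\neq 0$.''
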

	\section{The \texorpdfstring{$\Psi-$}{}representation for sums of powers}
	Now, put $\xi = \alpha x^2 +\beta xy + \alpha y^2$ and $ \eta = a x^2 +bxy +a y^2$ in equation \eqref{Ready66} of Theorem \eqref{Ready6}, we obtain
	\begin{equation}
		\begin{aligned}
			\label{WW1}
			\sum_{r=0}^{\lfloor{\frac{n}{2}}\rfloor}  \Psi\left( \begin{array}{cc|r} a & b & n \\ \alpha & \beta & r \end{array} \right) & (\alpha x^2 +\beta xy + \alpha y^2)^{\lfloor{\frac{n}{2}}\rfloor  - r}  (  a x^2 +bxy +a y^2 )^r \\
			& = (  \beta a - \alpha b  )^{\lfloor{\frac{n}{2}}\rfloor} \Psi(xy,-x^2-y^2, n).
		\end{aligned}
	\end{equation}
	Now, from \eqref{exp1}, we get
	\begin{equation}
		\begin{aligned}
			\label{WW2}
			\sum_{r=0}^{\lfloor{\frac{n}{2}}\rfloor}  \Psi\left( \begin{array}{cc|r} a & b & n \\ \alpha & \beta & r \end{array} \right) & (\alpha x^2 +\beta xy + \alpha y^2)^{\lfloor{\frac{n}{2}}\rfloor  - r}  (  a x^2 +bxy +a y^2 )^r \\
			&= (  \beta a - \alpha b  )^{\lfloor{\frac{n}{2}}\rfloor}\frac{x^n+y^n}{(x+y)^{\delta(n)}}.
		\end{aligned}
	\end{equation}
	From \eqref{WW1}, \eqref{WW2}, we get the following desirable $\Psi-$representation for the sums of powers 	
	\begin{theorem}{(The $\Psi-$representation for sums of powers)}
		\label{WW3}
		For any natural number $n$, the $\Psi-$polynomial satisfy the following identity
		\begin{equation}
			\label{WW4}
			\begin{aligned}
				\Psi(xy,-x^2-y^2,n) &= \frac{x^n+y^n}{(x+y)^{\delta(n)}}.   \\
			\end{aligned}
		\end{equation}
	\end{theorem}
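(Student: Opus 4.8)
The plan is to specialize the ``specialization and lifting'' identity of Theorem~\ref{Ready6} in the two binary quadratic forms that already govern the whole $\Psi$-calculus, and then match the result against the defining expansion of Theorem~\ref{exp1}. Concretely, in equation~\eqref{Ready66} I would substitute $\xi = \alpha x^2 + \beta xy + \alpha y^2$ and $\eta = a x^2 + b xy + a y^2$, now regarding $x,y$ as fresh indeterminates. The left-hand side of \eqref{Ready66} then turns into exactly the sum appearing on the left of \eqref{ex00}, so the entire task reduces to identifying the right-hand side $\Psi(a\xi - \alpha\eta,\, b\xi - \beta\eta,\, n)$ with a scalar multiple of $\Psi(xy,\,-x^2-y^2,\,n)$.

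The key computation is the pair of elementary identities $a\xi - \alpha\eta = (\beta a - \alpha b)\,xy$ and $b\xi - \beta\eta = -(\beta a - \alpha b)(x^2+y^2)$, both of which fall out after expanding and cancelling (the $xy$-terms in the first, the $x^2$- and $y^2$-terms in the second). Substituting these into the right-hand side of \eqref{Ready66} and invoking the homogeneity relation $\Psi(\lambda a,\lambda b,n)=\lambda^{\lfloor n/2\rfloor}\Psi(a,b,n)$ of Theorem~\ref{W11} with $\lambda = \beta a - \alpha b$ yields $\Psi(a\xi - \alpha\eta,\, b\xi - \beta\eta,\, n) = (\beta a - \alpha b)^{\lfloor n/2\rfloor}\,\Psi(xy,\,-x^2-y^2,\,n)$, which is precisely equation~\eqref{WW1}. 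Feeding the very same $\xi$ and $\eta$ into Theorem~\ref{exp1} produces equation~\eqref{WW2}, that is, the identical sum equals $(\beta a - \alpha b)^{\lfloor n/2\rfloor}\frac{x^n+y^n}{(x+y)^{\delta(n)}}$. Comparing \eqref{WW1} with \eqref{WW2}, the left-hand sides coincide termwise, hence so do the right-hand sides; since $\beta a - \alpha b \neq 0$ one cancels the common factor $(\beta a - \alpha b)^{\lfloor n/2\rfloor}$ and arrives at \eqref{WW4}.

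The only genuine care needed — the ``main obstacle,'' such as it is — is the bookkeeping: getting the two argument substitutions right with the correct sign (the minus sign in $b\xi - \beta\eta$ is exactly what forces $-x^2-y^2$ rather than $+x^2+y^2$ in the final formula), and checking that the homogeneity exponent $\lfloor n/2\rfloor$ supplied by Theorem~\ref{W11} matches the power of $(\beta a - \alpha b)$ emerging from \eqref{ex00}, so that the cancellation is clean and no residual power of $\beta a - \alpha b$ is left behind. Everything else is a direct substitution into results already established in the preceding sections.
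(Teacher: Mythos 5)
Your proposal is correct and follows essentially the same route as the paper: substituting $\xi=\alpha x^2+\beta xy+\alpha y^2$, $\eta=ax^2+bxy+ay^2$ into Theorem~\ref{Ready6}, computing $a\xi-\alpha\eta=(\beta a-\alpha b)xy$ and $b\xi-\beta\eta=-(\beta a-\alpha b)(x^2+y^2)$, applying the homogeneity relation of Theorem~\ref{W11}, and comparing with Theorem~\ref{exp1} before cancelling $(\beta a-\alpha b)^{\lfloor n/2\rfloor}\neq 0$. Indeed, your write-up makes explicit the intermediate computation that the paper leaves implicit in passing from \eqref{Ready66} to \eqref{WW1}.
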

	
	\section{The representation of \texorpdfstring{$\Psi$}{} in terms of \texorpdfstring{$\Omega$}{} sequence }
	
	\begin{theorem}{}
		\label{FD1} 
		For any $n,k,a,b,\alpha,\beta, \theta, n$, $\beta a - \alpha b \neq 0,$ the following expansion is true
		\begin{equation}
			\label{FD2}
			\begin{aligned}
				\Psi\left(\begin{array}{cc|c} a & b & n \\ \alpha & \beta & k \end{array} \right) = \sum_{r=0}^{\left\lfloor \frac{n}{2} \right\rfloor -k} \frac{(-1)^k}{k!}  \: \lambda_r(k|\alpha, \beta|n) a^r (2a-b)^{\left\lfloor \frac{n}{2} \right\rfloor -k- r}
			\end{aligned}
		\end{equation}
		where the numbers $\lambda_r(k|\alpha, \beta|n)$ are integers and divisible by $k!$ and satisfy the double-indexed recurrence relation 		
		
		\begin{equation}
			\label{FD3}
			\begin{aligned}
				\begin{cases}
					&        \lambda_r(k|\alpha, \beta|n)= 
					\:	(2\alpha - \beta) \:
					\Big(\left\lfloor \frac{n}{2} \right\rfloor -k- r +1 \Big)
					\: \lambda_r(k-1|\alpha, \beta|n)      \\	  
					&\qquad \qquad \qquad  \qquad \qquad \qquad  \qquad \qquad \:\: \: + \quad  \alpha \: (r+1) \: \lambda_{r+1}(k-1|\alpha, \beta|n)         	    \:     \\ 
					&\: \lambda_r(\:0\:|\:\alpha, \beta \:|\:n\:) = (-1)^r  \frac{n}{n-r} \binom{n-r}{r}   \\
				\end{cases} 
			\end{aligned}
		\end{equation}
	\end{theorem}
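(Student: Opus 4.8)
The plan is to take the recurrence \eqref{FD3} as the \emph{definition} of the polynomials $\lambda_r(k|\alpha,\beta|n)$ in $\alpha,\beta$, and then to prove the expansion \eqref{FD2} by induction on $k$; integrality and divisibility by $k!$ are extracted afterwards from Theorem \ref{comp2} and Theorem \ref{exp1} respectively. Throughout I would write $M:=\left\lfloor\frac n2\right\rfloor$, $u:=2a-b$, $D:=\alpha\frac{\partial}{\partial a}+\beta\frac{\partial}{\partial b}$, and abbreviate $\Psi_k(n):=\Psi\!\left(\begin{array}{cc|c} a & b & n \\ \alpha & \beta & k \end{array}\right)$.

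For the base case $k=0$, equation \eqref{ex000} gives $\Psi_0(n)=\Psi(a,b,n)$, and Theorem \ref{comp2} rewrites this as $\Psi(a,b,n)=\sum_{r=0}^{M}(-1)^r\frac{n}{n-r}\binom{n-r}{r}\,a^r u^{M-r}$, which is exactly \eqref{FD2} for $k=0$ with the coefficients $\lambda_r(0|\alpha,\beta|n)=(-1)^r\frac{n}{n-r}\binom{n-r}{r}$ prescribed by \eqref{FD3}. For the inductive step I would invoke the first differential identity of Theorem \ref{Aexp1}: specialized to index $k-1$ it becomes $D\,\Psi_{k-1}(n)=-k\,\Psi_k(n)$, i.e.\ $\Psi_k(n)=-\tfrac1k\,D\,\Psi_{k-1}(n)$. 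A one-line computation gives $D\big(a^r u^m\big)=\alpha r\,a^{r-1}u^m+(2\alpha-\beta)m\,a^r u^{m-1}$; applying this termwise to the inductive form $\Psi_{k-1}(n)=\frac{(-1)^{k-1}}{(k-1)!}\sum_r\lambda_r(k-1|\alpha,\beta|n)\,a^r u^{M-k+1-r}$, multiplying by $-\tfrac1k$ (which converts $\frac{(-1)^{k-1}}{(k-1)!}$ into $\frac{(-1)^k}{k!}$), and reading off the coefficient of $a^\rho u^{M-k-\rho}$ reproduces exactly the right-hand side of \eqref{FD3}, while a check of the summation limits shows that the surviving indices are precisely $0\le\rho\le M-k$. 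This establishes \eqref{FD2}.

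Integrality of $\lambda_r(k|\alpha,\beta|n)$ is then immediate by induction, since $\frac{n}{n-r}\binom{n-r}{r}$ is a well-known integer (it is one of the integer coefficients of the identity \eqref{00}) and the recurrence \eqref{FD3} has coefficients in $\mathbb Z[\alpha,\beta]$. For divisibility by $k!$, I would use that $\Psi_k(n)$ has integer coefficients as a polynomial in $a,b,\alpha,\beta$ by Theorem \ref{exp1}. Fixing $N:=M-k$, the expansion $a^r(2a-b)^{N-r}=\sum_{s=0}^{N-r}\binom{N-r}{s}2^{\,N-r-s}(-1)^s a^{N-s}b^s$ exhibits the change of basis between $\{a^r(2a-b)^{N-r}\}_{r=0}^{N}$ and $\{a^{N-j}b^{j}\}_{j=0}^{N}$ in the degree-$N$ homogeneous part of $\mathbb Z[a,b]$ as triangular with diagonal entries $\pm1$, hence unimodular over $\mathbb Z$; therefore the coefficients of $\Psi_k(n)$ in the basis $\{a^r(2a-b)^{N-r}\}$ are again integer polynomials in $\alpha,\beta$, and by \eqref{FD2} those coefficients equal $\frac{(-1)^k}{k!}\lambda_r(k|\alpha,\beta|n)$, so $k!\mid\lambda_r(k|\alpha,\beta|n)$ in $\mathbb Z[\alpha,\beta]$.

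The step I expect to require the most care is this last one: passing from ``$\Psi_k(n)$ has integer coefficients in $a,b$'' to ``$\Psi_k(n)$ has integer coefficients when re-expanded in powers of $a$ and $2a-b$'' is precisely where the unimodularity of the change of basis is indispensable. Everything else is routine: the elementary derivative identity for $D(a^r u^m)$, an induction on $k$, and the sign-and-factorial bookkeeping that turns $\frac{(-1)^{k-1}}{(k-1)!}$ into $\frac{(-1)^k}{k!}$.
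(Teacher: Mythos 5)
Your proposal is correct, and its engine is the same as the paper's: the differential relation of Theorem \ref{Aexp1} (equivalently $\Psi_k(n)=\frac{(-1)^k}{k!}\bigl(\alpha\frac{\partial}{\partial a}+\beta\frac{\partial}{\partial b}\bigr)^k\Psi(a,b,n)$ from Theorem \ref{A1}), the explicit formula \eqref{comp3} for the base case, termwise differentiation of $a^r(2a-b)^m$, and comparison of coefficients using the algebraic independence of $a$ and $2a-b$. The organization is reversed: the paper defines $\lambda_r(k|\alpha,\beta|n)$ as the coefficients of the expansion \eqref{Q2} and then derives the recurrence \eqref{Q9}, whereas you take the recurrence \eqref{FD3} as the definition and recover the expansion \eqref{FD2} by induction on $k$; this is the same computation read in the opposite direction, and your bookkeeping of the index range $0\le\rho\le\lfloor\frac n2\rfloor-k$ and of the factor $\frac{(-1)^{k-1}}{(k-1)!}\mapsto\frac{(-1)^k}{k!}$ is right. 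The genuine difference lies in the arithmetic claims: the paper merely asserts, in the sentence following \eqref{Q1}, that the $\lambda_r$ are integers divisible by $k!$, while you prove integrality by induction from the recurrence (integer initial values $(-1)^r\frac{n}{n-r}\binom{n-r}{r}$ and coefficients in $\mathbb{Z}[\alpha,\beta]$) and prove the $k!$-divisibility by combining the integrality of the coefficients of the polynomials of Theorem \ref{exp1} with the observation that the change of basis between $\{a^r(2a-b)^{N-r}\}_{r=0}^{N}$ and the monomials $\{a^{N-j}b^j\}_{j=0}^{N}$ is triangular with diagonal entries $\pm1$, hence unimodular over $\mathbb{Z}$. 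That unimodularity argument supplies a justification the paper leaves implicit, and it is sound; the only hypothesis you use tacitly, homogeneity of degree $\lfloor\frac n2\rfloor-k$ in $(a,b)$, is already furnished by the expansion \eqref{FD2} you have just proved (or by the second scaling identity of Theorem \ref{W11}). So: same route in substance, with your version filling in the divisibility step that the paper states without proof.
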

		\begin{proof}  
		From Theorem\eqref{A1}, we know that 
		\[ \Psi\left(\begin{array}{cc|c} a & b & n \\ \alpha & \beta & k \end{array} \right) =  \frac{(-1)^k}{k!} \Big(\alpha \frac{{\partial} }{\partial a} + \beta \frac{{\partial}}{\partial b}\Big)^{k} \Psi(a,b,n). \]			
		Then from equation \eqref{comp3}, it follows  	
		\begin{equation}
			\label{Q1}
			\begin{aligned}
				\Psi\left(\begin{array}{cc|c} a & b & n \\ \alpha & \beta & k \end{array} \right)  &= \frac{(-1)^k}{k!} \Big(\alpha \frac{{\partial} }{\partial a} + \beta \frac{{\partial}}{\partial b}\Big)^{k}  \sum_{r=0}^{\left\lfloor \frac{n}{2} \right\rfloor}\frac{n}{n-r} \binom{n-r}{r} (-a)^r (2a-b)^{\left\lfloor \frac{n}{2} \right\rfloor - r} \\
				&= \frac{(-1)^k}{k!} \sum_{r=0}^{\left\lfloor \frac{n}{2} \right\rfloor} (-1)^r  \frac{n}{n-r} \binom{n-r}{r}  \Big(\alpha \frac{{\partial} }{\partial a} + \beta \frac{{\partial}}{\partial b}\Big)^{k}   a^r (2a-b)^{\left\lfloor \frac{n}{2} \right\rfloor - r}. 
			\end{aligned}
		\end{equation}
		From \eqref{Q1}, and for $0 \leq k+r \leq \left\lfloor \frac{n}{2} \right\rfloor$, there exist integers $\lambda_r(k|\alpha, \beta|n)$, which  are divisible by $k!$, and depend only on the numbers $r,k,\alpha, \beta,n$ (and independent on $a,b$), such that   
		\begin{equation}
			\label{Q2}
			\begin{aligned}
				\Psi\left(\begin{array}{cc|c} a & b & n \\ \alpha & \beta & k \end{array} \right) = \frac{(-1)^k}{k!} \sum_{r=0}^{\left\lfloor \frac{n}{2} \right\rfloor -k}  \lambda_r(k|\alpha, \beta|n) a^r (2a-b)^{\left\lfloor \frac{n}{2} \right\rfloor -k- r}. \\
			\end{aligned}
		\end{equation}
		This proves \eqref{FD2}. Now, to study the coefficients $\lambda_r(k|\alpha, \beta|n)$, we need to compute the recurrence relation that arise up easily once we notice, from Theorem \eqref{Aexp1}, that the following differential property of $\Psi$ for any non-negative integer $k$ 
		
		\begin{align}
			\begin{aligned}
				\label{Q4}
				\big(\alpha \frac{{\partial} }{\partial a} +  \beta \frac{{\partial} }{\partial b}\big)	\Psi\left(\begin{array}{cc|c} a & b & n \\ \alpha & \beta & k \end{array} \right) = - (k+1)	\Psi\left(\begin{array}{cc|c} a & b & n \\ \alpha & \beta & k+1 \end{array} \right).   
			\end{aligned}
		\end{align}	
		Then from \eqref{Q2}, \eqref{Q4}, we get 	
		\begin{equation}
			\label{Q5}
			\begin{aligned}
				\big(\alpha \frac{{\partial} }{\partial a} +  \beta \frac{{\partial} }{\partial b}\big)		\frac{(-1)^k}{k!} &\sum_{r=0}^{\left\lfloor \frac{n}{2} \right\rfloor -k}  \lambda_r(k|\alpha, \beta|n) a^r (2a-b)^{\left\lfloor \frac{n}{2} \right\rfloor -k- r} \\ &= - (k+1)  \frac{(-1)^{(k+1)}}{(k+1)!} \sum_{r=0}^{\left\lfloor \frac{n}{2} \right\rfloor -k -1}  \lambda_r(k+1|\alpha, \beta|n) a^r (2a-b)^{\left\lfloor \frac{n}{2} \right\rfloor -k-1- r}.
			\end{aligned}
		\end{equation}	
		Simplifying, and noting that the coefficients $\lambda_r(k|\alpha, \beta|n)$ are independent on $a,b$, we get
		\[ \big(\alpha \frac{{\partial} }{\partial a} +  \beta \frac{{\partial} }{\partial b}\big)  \lambda_r(k|\alpha, \beta|n) =0.                  \] 
		Therefore
		\begin{equation}
			\label{Q6}
			\begin{aligned}
				\sum_{r=0}^{\left\lfloor \frac{n}{2} \right\rfloor -k}  \lambda_r(k|\alpha, \beta|n)        &  	\big(\alpha \frac{{\partial} }{\partial a} +  \beta \frac{{\partial} }{\partial b}\big)	            a^r (2a-b)^{\left\lfloor \frac{n}{2} \right\rfloor -k- r} \\ &=  \sum_{r=0}^{\left\lfloor \frac{n}{2} \right\rfloor -k -1}  \lambda_r(k+1|\alpha, \beta|n) a^r (2a-b)^{\left\lfloor \frac{n}{2} \right\rfloor -k- r -1}.
			\end{aligned}
		\end{equation}		
		Hence
		\begin{equation}
			\label{Q7}
			\begin{aligned}
				\sum_{r=0}^{\left\lfloor  \frac{n}{2} \right\rfloor -k -1}  \alpha &\: (r+1) \: \lambda_{r+1}(k|\alpha, \beta|n)         	    \:     a^r \:  (2a-b)^{\left\lfloor \frac{n}{2} \right\rfloor -k- r-1} \\ 
				+	\sum_{r=0}^{\left\lfloor \frac{n}{2} \right\rfloor -k -1} & (2\alpha - \beta)
				\Big(\left\lfloor \frac{n}{2} \right\rfloor -k- r \Big)
				\: \lambda_r(k|\alpha, \beta|n)      	    \:     a^r \: (2a-b)^{\left\lfloor \frac{n}{2} \right\rfloor -k- r-1} \\
				=  \sum_{r=0}^{\left\lfloor \frac{n}{2} \right\rfloor -k -1} & \lambda_r(k+1|\alpha, \beta|n) \: a^r \: (2a-b)^{\left\lfloor \frac{n}{2} \right\rfloor -k- r-1}.
			\end{aligned}
		\end{equation}	
		From \eqref{Q7}, comparing the coefficients, and noting that $a, 2a-b$ are algebraically independent, we immediately get 
	
		\begin{equation}
			\label{Q9}
			\begin{aligned}
				\alpha \: (r+1)  \: \lambda_{r+1}(k|\alpha, \beta|n)  
				+ \:	(2\alpha - \beta) \:
				\Big(\left\lfloor \frac{n}{2} \right\rfloor -k- r \Big)&
				\:  \lambda_r(k|\alpha, \beta|n)       	  \\
				&\qquad  =   \lambda_r(k+1|\alpha, \beta|n).
			\end{aligned}
		\end{equation}
		
		Now the initial value for $\lambda_r(\:k\:|\alpha, \beta \:|\:n\:)$, that corresponds to $k=0$, is given by
		
		\begin{equation}
			\label{FD2Q}
			\begin{aligned}
				\Psi\left(\begin{array}{cc|c} a & b & n \\ \alpha & \beta & 0 \end{array} \right) = \sum_{r=0}^{\left\lfloor \frac{n}{2} \right\rfloor -k} \: \lambda_r(0|\alpha, \beta|n) a^r (2a-b)^{\left\lfloor \frac{n}{2} \right\rfloor - r}.
			\end{aligned}
		\end{equation}
	Then, from 	\eqref{ex000}, we get 
		\begin{equation}
		\label{FD2QQ}
		\begin{aligned}
			\Psi(a,b,n)= \sum_{r=0}^{\left\lfloor \frac{n}{2} \right\rfloor -k}  \: \lambda_r(0|\alpha, \beta|n) a^r (2a-b)^{\left\lfloor \frac{n}{2} \right\rfloor - r}.
		\end{aligned}		
\end{equation}		
Consequently, from \eqref{comp3}, we immediately get 		
   \begin{equation}
			\label{Q3}
			\begin{aligned}
				\lambda_r(\:0\:|\alpha, \beta \:|\:n\:) = (-1)^r  \frac{n}{n-r} \binom{n-r}{r}, 
			\end{aligned}
		\end{equation}
which completes the proof.	
			\end{proof}
	
	\begin{theorem}
		\label{H1} 
		For any $n,r, k,\alpha,\beta, n$, the following relation is true

		\begin{equation}
			\label{H2} 
			\begin{aligned}
				\lambda_r(k|\alpha, \beta|n)= (-1)^r \: \frac{\: n \: \: (n-r-k-1)! \: (\left\lfloor \frac{n}{2} \right\rfloor - r )! \: }{\: (n-2r)! \:\: r! \:\:  (\left\lfloor \frac{n}{2} \right\rfloor - r -k)! \: }	\:  \Omega_r\big(k|\:\alpha, \beta \: | n \big).         		
			\end{aligned}
		\end{equation}
	\end{theorem}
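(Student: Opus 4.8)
The plan is to prove \eqref{H2} by induction on $k$, using that both sides satisfy explicit first-order recurrences in the variable $k$. On the left, $\lambda_r(k|\alpha,\beta|n)$ obeys the recurrence \eqref{FD3} established in Theorem~\ref{FD1}; on the right, $\Omega_r(k|\alpha,\beta|n)$ obeys its defining recurrence \eqref{G0} of Definition~\ref{omegaDef} with the point $(\zeta,\xi)$ specialized to $(\alpha,\beta)$. Writing $L:=\lfloor n/2\rfloor$ and (suppressing the fixed data $\alpha,\beta,n$ from the notation)
\[ c_r(k):=(-1)^r\,\frac{n\,(n-r-k-1)!\,(L-r)!}{(n-2r)!\,r!\,(L-r-k)!}\,,\qquad 0\le r+k\le L\,, \]
the identity \eqref{H2} is exactly $\lambda_r(k)=c_r(k)\,\Omega_r(k)$, so this is what I want to establish for all admissible $r,k$.

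First I would settle the base case $k=0$: here $\Omega_r(0)=1$ by Definition~\ref{omegaDef}, while $c_r(0)$ collapses to $(-1)^r\frac{n(n-r-1)!}{(n-2r)!\,r!}=(-1)^r\frac{n}{n-r}\binom{n-r}{r}$, which is precisely the recorded initial value of $\lambda_r(0)$ from \eqref{Q3}. Hence the identity holds at level $k=0$.

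For the inductive step I would assume $\lambda_\rho(k-1)=c_\rho(k-1)\,\Omega_\rho(k-1)$ for all admissible $\rho$, substitute this into \eqref{FD3}, and try to arrive at $c_r(k)$ times the right-hand side of \eqref{G0}. After the substitution the step reduces to two purely arithmetic identities among the coefficients, namely
\[ (L-k-r+1)\,c_r(k-1)=(n-r-k)\,c_r(k)\qquad\text{and}\qquad (r+1)\,c_{r+1}(k-1)=-2\,(n-2r-\delta(n-1))\,c_r(k)\,. \]
Granting these, one reads off
\[ \lambda_r(k)=c_r(k)\Big[(2\alpha-\beta)(n-r-k)\,\Omega_r(k-1)-2\alpha\,(n-2r-\delta(n-1))\,\Omega_{r+1}(k-1)\Big]=c_r(k)\,\Omega_r(k)\,, \]
by \eqref{G0}, which closes the induction.

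The first coefficient identity is a one-line cancellation, using $(n-r-k)(n-r-k-1)!=(n-r-k)!$ together with $(L-k-r+1)/(L-r-k+1)!=1/(L-r-k)!$. The second identity is the step I expect to be the only genuinely delicate point: after cancelling the common factorials and the sign it reduces to the scalar equation $(n-2r)(n-2r-1)=2\,(n-2r-\delta(n-1))\,(L-r)$, and the $\delta(n-1)$ correction is precisely the device that makes this hold uniformly in the parity of $n$ — for even $n$ one has $L-r=(n-2r)/2$ and $\delta(n-1)=1$, for odd $n$ one has $L-r=(n-2r-1)/2$ and $\delta(n-1)=0$, and in both cases the right-hand side equals $(n-2r)(n-2r-1)$. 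I would also note in passing that all factorials appearing stay non-negative over the admissible range, since $r+k\le L$ gives $n-r-k\ge\lceil n/2\rceil\ge1$, and in the inductive step (where $k\ge1$) one further has $n-2r\ge2$, so that $c_{r+1}(k-1)$ is well defined; with the two coefficient identities in hand the induction — and the proof — is complete.
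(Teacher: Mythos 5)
Your proof is correct, and it is essentially the paper's argument: the paper proves the same statement by defining $\bar{\Omega}_r(k)$ through the claimed relation and checking that it satisfies the defining recurrence and initial value of $\Omega$, which amounts to exactly your induction on $k$, with the same two coefficient cancellations and the same key parity identity $(n-2r)(n-2r-1)=2\,(n-2r-\delta(n-1))(\lfloor\frac{n}{2}\rfloor-r)$ at its core. The only difference is presentational (explicit induction versus ``same recurrence, same initial data, hence equal''), so nothing substantive separates the two proofs.
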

	
	\begin{proof}
		To prove \eqref{H2}, define $\bar{\Omega}_r\big(k|\:\alpha, \beta \: | n \big)$ as following
		
		\begin{equation}
			\label{H3} 
			\begin{aligned}
				\lambda_r(k|\alpha, \beta|n)= (-1)^r \: \frac{\: n \: \: (n-r-k-1)! \: (\left\lfloor \frac{n}{2} \right\rfloor - r )! \: }{\: (n-2r)! \:\: r! \:\:  (\left\lfloor \frac{n}{2} \right\rfloor - r -k)! \: }	\:  \bar{\Omega}_r\big(k|\:\alpha, \beta \: | n \big).         		
			\end{aligned}
		\end{equation}
		We need to prove that 
		\begin{equation}
			\label{H4} 
			\begin{aligned}
				\bar{\Omega}_r\big(k|\:\alpha, \beta \: | n \big) \: = \:	\Omega_r\big(k|\:\alpha, \beta \: | n \big),         		
			\end{aligned}
		\end{equation}
		as following. From \eqref{H3}, 	\eqref{FD3}, we get
		
		\begin{equation}
			\label{H5} 
			\begin{aligned}
				& (-1)^r \: \frac{\: n \: \: (n-r-k-1)! \: (\left\lfloor \frac{n}{2} \right\rfloor - r )! \: }{\: (n-2r)! \:\: r! \:\:  (\left\lfloor \frac{n}{2} \right\rfloor - r -k)! \: }	\:  \bar{\Omega}_r\big(k|\:\alpha, \beta \: | n \big) \\   &= \: 
				(2\alpha - \beta) \:
				\Big(\left\lfloor \frac{n}{2} \right\rfloor -k- r +1 \Big)  (-1)^r \: \frac{\: n \: \: (n-r-k)! \: (\left\lfloor \frac{n}{2} \right\rfloor - r )! \: }{\: (n-2r)! \:\: r! \:\:  (\left\lfloor \frac{n}{2} \right\rfloor - r -k +1)! \: }	\:  \bar{\Omega}_r\big(k-1|\:\alpha, \beta \: | n \big) \\
				& + \quad  \alpha \: (r+1) \: 
				(-1)^{r+1} \: \frac{\: n \: \: (n-r-k-1)! \: (\left\lfloor \frac{n}{2} \right\rfloor - r -1 )! \: }{\: (n-2r -2)! \:\: (r+1)! \:\:  (\left\lfloor \frac{n}{2} \right\rfloor - r -k )! \: }	\:  \bar{\Omega}_{r+1}\big(k-1|\:\alpha, \beta \: | n \big).
			\end{aligned}
		\end{equation}	
		Simplifying again, we get 
		\begin{equation}
			\label{H6} 
			\begin{aligned}
				& \:  \bar{\Omega}_r\big(k|\:\alpha, \beta \: | n \big) \\   &= \: 
				(2\alpha - \beta) \: (n-r-k)	\:  \bar{\Omega}_r\big(k-1|\:\alpha, \beta \: | n \big) \\
				& - \quad  \alpha \:
				\: \frac{\: \: (n-2r) \: (n-2r-1) \:}{\: (\left\lfloor \frac{n}{2} \right\rfloor - r )! \: }	\:  \bar{\Omega}_{r+1}\big(k-1|\:\alpha, \beta \: | n \big).
			\end{aligned}
		\end{equation}		
		If $\delta(n)=0$ then $ \delta(n-1)=1$, and if  	$\delta(n)=1$ then $\delta(n-1)=0$. Therefore, for either case, we get the following 
		\begin{equation}
			\label{H7} 
			\begin{aligned}
				\: \frac{\: \: (n-2r) \: (n-2r-1) \:}{\: (\left\lfloor \frac{n}{2} \right\rfloor - r )! \: }	\: &=  \frac{(n-2r-\delta(n)) \:(n-2r-\delta(n-1)) \: }{ \frac{n-\delta(n)}{2}\: - \:r} \\
				&=  2 \:\frac{(n-2r-\delta(n) \:(n-2r-\delta(n-1)) \: }{n-2r-\delta(n) } \\	
				&=   2 \: 	(n-2r-\delta(n-1)).
			\end{aligned}
		\end{equation}
		Hence, from \eqref{H6}, \eqref{H7}, we get
		
		\begin{equation}
			\label{H8} 
			\begin{aligned}
				& \:  \bar{\Omega}_r\big(k|\:\alpha, \beta \: | n \big) \\   &= \: 
				(2\alpha - \beta) \: (n-r-k)	\:  \bar{\Omega}_r\big(k-1|\:\alpha, \beta \: | n \big) \\
				& - \: 2 \:  \alpha \:
				\: 	(n-2r-\delta(n-1))	\:  \bar{\Omega}_{r+1}\big(k-1|\:\alpha, \beta \: | n \big).
			\end{aligned}
		\end{equation}	
		Now, it remains to compute the initial value 
		\[ \bar{\Omega}_r\big(0|\:\alpha, \beta \: | n \big),   \]
		as following.	Put $k=0$ in \eqref{H3}, and noting \eqref{FD3},   we get 
		
		\begin{equation}
			\label{H9} 
			\begin{aligned}
				(-1)^r  \frac{n}{n-r} \binom{n-r}{r}    = (-1)^r \: \frac{\: n \: \: (n-r-1)! \: (\left\lfloor \frac{n}{2} \right\rfloor - r )! \: }{\: (n-2r)! \:\: r! \:\:  (\left\lfloor \frac{n}{2} \right\rfloor - r )! \: }	\:  \bar{\Omega}_r\big( 0 \:|\:\alpha, \beta \: | n \big).         		
			\end{aligned}
		\end{equation}
		Consequently, for any $r$, we get 	
		\begin{equation}
			\label{H10} 
			\begin{aligned}
				1   = \:  \bar{\Omega}_r\big(0\:|\:\alpha, \beta\: | n \big).         		
			\end{aligned}
		\end{equation}
		From \eqref{H10}, \eqref{H8}, and from definition \eqref{omegaDef} of Omega sequence, we immediately conclude
		\begin{equation}
			\label{H11} 
			\begin{aligned}
				\bar{\Omega}_r\big(k|\:\alpha, \beta \: | n \big) \: = \:	\Omega_r\big(k|\:\alpha, \beta\: | n \big).         		
			\end{aligned}
		\end{equation}
		This completes the proof of Theorem \eqref{H1}.	
		\end{proof}
 Therefore, from Theorem \eqref{FD1} and Theorem \eqref{H1}, we get the following representation for $\Psi\left( \begin{array}{cc|r} a & b & n \\ \alpha & \beta & k \end{array} \right)    $ in terms of $\Omega-$sequence, which is quite desirable.
	\begin{theorem}{(The representation of $\Psi-$sequence in terms of $\Omega-$sequence)}\\
		\label{F11} 
For any numbers $a,b,\alpha,\beta, n$, $\beta a - \alpha b \neq 0$, we get the following expansion  		
		
		\begin{equation}
			\label{F1100} 
			\begin{aligned}
				&\quad \quad \Psi\left( \begin{array}{cc|r} a & b & n \\ \alpha & \beta & k \end{array} \right)   \\
				&=  \sum_{r=0}^{\lfloor{\frac{n}{2}}\rfloor - k} (-1)^{r+k} \: \frac{\: (n-r-k-1)! \: \:n \: \:}{(n-2r)! \: r!}
				\left(\begin{array}{c} \lfloor{\frac{n}{2}}\rfloor - r \\ k \end{array}\right)   \:	\Omega_r\big(k|\:\alpha, \beta \: | n \big) \:\:
				a^{r} \: (2a-b)^{\lfloor{\frac{n}{2}}\rfloor -k -r},
			\end{aligned}
		\end{equation}
		where the coefficients 
		\begin{equation}
			\label{F22} 
			\begin{aligned}
				(-1)^{r+k} \: \frac{\: (n-r-k-1)! \: \:n \: \:}{(n-2r)! \: r!}
				\left(\begin{array}{c} \lfloor{\frac{n}{2}}\rfloor - r \\ k \end{array}\right)   \:	\Omega_r\big(k|\:\alpha, \beta \: | n \big) 
			\end{aligned}
		\end{equation}
		are integers. 	
	\end{theorem}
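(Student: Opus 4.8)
The plan is to obtain the expansion \eqref{F1100} by a direct substitution: plug the closed form for $\lambda_r(k|\alpha,\beta|n)$ provided by Theorem \ref{H1} into the expansion \eqref{FD2} of Theorem \ref{FD1}, and simplify. Concretely, Theorem \ref{FD1} gives
\[
\Psi\left(\begin{array}{cc|c} a & b & n \\ \alpha & \beta & k \end{array}\right) = \sum_{r=0}^{\lfloor\frac{n}{2}\rfloor - k} \frac{(-1)^k}{k!}\,\lambda_r(k|\alpha,\beta|n)\, a^r (2a-b)^{\lfloor\frac{n}{2}\rfloor - k - r},
\]
while Theorem \ref{H1} gives
\[
\lambda_r(k|\alpha,\beta|n) = (-1)^r\,\frac{n\,(n-r-k-1)!\,(\lfloor\frac{n}{2}\rfloor - r)!}{(n-2r)!\,r!\,(\lfloor\frac{n}{2}\rfloor - r - k)!}\,\Omega_r(k|\alpha,\beta|n).
\]
Substituting, the scalar multiplying $\Omega_r(k|\alpha,\beta|n)\,a^r(2a-b)^{\lfloor\frac{n}{2}\rfloor - k - r}$ becomes
\[
(-1)^{r+k}\,\frac{n\,(n-r-k-1)!}{(n-2r)!\,r!}\cdot\frac{(\lfloor\frac{n}{2}\rfloor - r)!}{k!\,(\lfloor\frac{n}{2}\rfloor - r - k)!},
\]
and recognizing the last factor as $\binom{\lfloor\frac{n}{2}\rfloor - r}{k}$ produces precisely the coefficient displayed in \eqref{F1100}; the summation range $0 \le r \le \lfloor\frac{n}{2}\rfloor - k$ carries over unchanged from \eqref{FD2}.

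The only points needing care in this first step are bookkeeping ones: the sign $(-1)^k$ coming from Theorem \ref{FD1} and the sign $(-1)^r$ coming from Theorem \ref{H1} must be combined into $(-1)^{r+k}$, and one should note that every factorial appearing has a non-negative argument for $0 \le r \le \lfloor\frac{n}{2}\rfloor - k$ — which is immediate from $r+k \le \lfloor\frac{n}{2}\rfloor$ — so that the rearrangement $\frac{(\lfloor\frac{n}{2}\rfloor - r)!}{k!\,(\lfloor\frac{n}{2}\rfloor - r - k)!} = \binom{\lfloor\frac{n}{2}\rfloor - r}{k}$ is legitimate.

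For the integrality of the coefficients \eqref{F22}, I would \emph{not} attempt to show that the separate factors $\frac{n(n-r-k-1)!}{(n-2r)!\,r!}$, $\binom{\lfloor\frac{n}{2}\rfloor - r}{k}$, and $\Omega_r(k|\alpha,\beta|n)$ are individually integers, since in general they are not. Instead I would observe that, by the identity just established, their product equals $\frac{(-1)^k}{k!}\lambda_r(k|\alpha,\beta|n)$, which Theorem \ref{FD1} already guarantees to be an integer (equivalently, an integer-coefficient polynomial in $\alpha,\beta$), because $\lambda_r(k|\alpha,\beta|n)$ is divisible by $k!$. A more structural way to say the same thing: by Theorem \ref{exp1} the polynomial $\Psi\left(\begin{array}{cc|c} a & b & n \\ \alpha & \beta & k\end{array}\right)$ has integer coefficients in $a,b,\alpha,\beta$, and since $a$ and $2a-b$ are algebraically independent its expansion in the monomials $a^r(2a-b)^s$ is unique with each coefficient an integer-coefficient polynomial in $\alpha,\beta$; the coefficient of $a^r(2a-b)^{\lfloor\frac{n}{2}\rfloor - k - r}$ is then exactly the expression \eqref{F22}, hence an integer.

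I expect essentially no conceptual obstacle: Theorem \ref{F11} is a consolidation of the two immediately preceding theorems, and its entire content is the factorial-to-binomial rearrangement together with the transfer of integrality from Theorem \ref{FD1}. If anything is delicate, it is only making sure the normalization factors are combined exactly as above and that the hypothesis $\beta a - \alpha b \neq 0$ — needed for $\Psi\left(\begin{array}{cc|c} a & b & n \\ \alpha & \beta & k\end{array}\right)$ to be defined via Theorem \ref{exp1} — is carried along throughout.
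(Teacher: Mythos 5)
Your proposal is correct and follows exactly the route the paper takes: the paper obtains Theorem \ref{F11} by substituting the closed form of $\lambda_r(k|\alpha,\beta|n)$ from Theorem \ref{H1} into the expansion \eqref{FD2} of Theorem \ref{FD1}, with the same sign combination and the same rewriting of $\frac{(\lfloor\frac{n}{2}\rfloor-r)!}{k!\,(\lfloor\frac{n}{2}\rfloor-r-k)!}$ as $\binom{\lfloor\frac{n}{2}\rfloor-r}{k}$, and the integrality likewise rests on the $k!$-divisibility of $\lambda_r$ asserted in Theorem \ref{FD1}. Your added remark that the individual factors need not be integers, and the alternative justification via uniqueness of the expansion in the algebraically independent quantities $a$ and $2a-b$, are sound refinements of the same argument.
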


	Now, we get the following desirable theorem.
	\begin{theorem}{ (The first fundamental theorem of $\Omega-$sequence) }\\
		\label{k0} 
For any numbers $\alpha,\beta, n$, $(\alpha, \beta)\neq (0,0)$, the ratio 

\begin{equation}
	\label{k00N} 
	\begin{aligned}
	 \: \frac{\:	\Omega_0\big(\left\lfloor \frac{n}{2} \right\rfloor|\:\alpha, \beta\: | n \big)  \:}{\:(n-1)(n-2) \cdots  (n-\left\lfloor \frac{n}{2} \right\rfloor) \: }   		
	\end{aligned}
\end{equation}
is integer. Moreover this ratio gives $\Psi(\alpha, \beta,n)$. Namely
		\begin{equation}
			\label{k00} 
			\begin{aligned}
				\Psi(\alpha, \beta, n) \:= \: \frac{\:	\Omega_0\big(\left\lfloor \frac{n}{2} \right\rfloor|\:\alpha, \beta\: | n \big)  \:}{\:(n-1)(n-2) \cdots  (n-\left\lfloor \frac{n}{2} \right\rfloor) \: }.    		
			\end{aligned}
		\end{equation}
	\end{theorem}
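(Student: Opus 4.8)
The plan is to obtain Theorem~\ref{k0} as the single-term specialization of Theorem~\ref{F11} at $k=\lfloor\frac{n}{2}\rfloor$, combined with equation~\eqref{ex111} of Theorem~\ref{exp1}. First I would fix $\alpha,\beta$ with $(\alpha,\beta)\neq(0,0)$ and choose auxiliary values $a,b$ with $\beta a-\alpha b\neq 0$; this is possible precisely because $(\alpha,\beta)\neq(0,0)$ (take $(a,b)=(1,0)$ when $\beta\neq0$ and $(a,b)=(0,1)$ when $\alpha\neq0$). Substituting $k=\lfloor\frac{n}{2}\rfloor$ into \eqref{F1100}, the index $r$ runs only over $r=0$, the binomial coefficient $\binom{\lfloor n/2\rfloor-r}{k}$ becomes $\binom{\lfloor n/2\rfloor}{\lfloor n/2\rfloor}=1$, and the monomial $a^{r}(2a-b)^{\lfloor n/2\rfloor-k-r}$ becomes $1$, so the right-hand side collapses to a single scalar multiple of $\Omega_0\big(\lfloor\frac{n}{2}\rfloor\mid\alpha,\beta\mid n\big)$.

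Next I would carry out the factorial bookkeeping. The surviving scalar is $(-1)^{\lfloor n/2\rfloor}\,\frac{n\,(n-\lfloor n/2\rfloor-1)!}{n!}$, and using $n!=n\,(n-1)!$ together with $(n-1)!=(n-1)(n-2)\cdots(n-\lfloor\frac{n}{2}\rfloor)\,(n-\lfloor\frac{n}{2}\rfloor-1)!$ this equals $\frac{(-1)^{\lfloor n/2\rfloor}}{(n-1)(n-2)\cdots(n-\lfloor n/2\rfloor)}$. Thus \eqref{F1100} reduces to
\[
\Psi\left(\begin{array}{cc|c} a & b & n \\ \alpha & \beta & \lfloor{\frac{n}{2}}\rfloor \end{array}\right)
= (-1)^{\lfloor{\frac{n}{2}}\rfloor}\,\frac{\Omega_0\big(\lfloor{\frac{n}{2}}\rfloor\mid\alpha,\beta\mid n\big)}{(n-1)(n-2)\cdots(n-\lfloor{\frac{n}{2}}\rfloor)}.
\]
Now I would invoke \eqref{ex111}, which identifies the left-hand side as $(-1)^{\lfloor n/2\rfloor}\,\Psi(\alpha,\beta,n)$; cancelling $(-1)^{\lfloor n/2\rfloor}$ gives exactly \eqref{k00}. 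A short remark is needed on the hypotheses: the displayed right-hand side does not involve $a,b$ at all, so although Theorem~\ref{F11} was applied under $\beta a-\alpha b\neq0$, the conclusion depends only on $(\alpha,\beta)\neq(0,0)$, matching the statement.

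For the integrality claim \eqref{k00N}, I would note that Theorem~\ref{F11} already asserts that the coefficient \eqref{F22} is an integer, and that for $r=0$, $k=\lfloor\frac{n}{2}\rfloor$ this coefficient equals $(-1)^{\lfloor n/2\rfloor}$ times the ratio in \eqref{k00N}, so that ratio is an integer; equivalently, by \eqref{comp3} the quantity $\Psi(\alpha,\beta,n)$ is a polynomial in $\alpha,\beta$ with integer coefficients, and (since by Definition~\ref{omegaDef} $\Omega_0\big(\lfloor\frac{n}{2}\rfloor\mid\alpha,\beta\mid n\big)$ is itself a polynomial in $\alpha,\beta$) the equal ratio is an integer for integer $\alpha,\beta$. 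I do not expect a genuine obstacle here: the argument is a direct specialization, and the only places calling for care are the reduction of the factorial coefficient and the brief reconciliation of the hypothesis $\beta a-\alpha b\neq0$ with $(\alpha,\beta)\neq(0,0)$.
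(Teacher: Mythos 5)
Your proposal is correct and follows essentially the same route as the paper: specialize Theorem~\ref{F11} at $k=\lfloor\frac{n}{2}\rfloor$ so the sum collapses to the $r=0$ term, simplify the factorial coefficient to $(-1)^{\lfloor n/2\rfloor}/\big((n-1)\cdots(n-\lfloor\frac{n}{2}\rfloor)\big)$, and identify the left-hand side via \eqref{ex111} of Theorem~\ref{exp1}. Your added remarks on choosing auxiliary $a,b$ with $\beta a-\alpha b\neq0$ and on the integrality of the ratio are sound refinements of details the paper leaves implicit.
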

	
	\begin{proof}
		To deduce the formula \eqref{k00} for  \[	\Omega_0\big(\left\lfloor \frac{n}{2} \right\rfloor\:|\:\alpha, \beta \: | n \big),       \]	
		we put $k=\left\lfloor \frac{n}{2} \right\rfloor$ in Theorem \eqref{F11} as following	
		
		\begin{equation}
			\label{k1} 
			\begin{aligned}
				&\quad \quad \Psi\left( \begin{array}{cc|c} a & b & n \\ \alpha & \beta &\left\lfloor \frac{n}{2} \right\rfloor \end{array} \right)   \\
				&=  \sum_{r=0}^{0} (-1)^{r+\left\lfloor \frac{n}{2} \right\rfloor} \: \frac{\: (n-r-\left\lfloor \frac{n}{2} \right\rfloor-1)! \: \:n \: \:}{(n-2r)! \: r!}
				\left(\begin{array}{c} \lfloor{\frac{n}{2}}\rfloor - r \\ \left\lfloor \frac{n}{2} \right\rfloor \end{array}\right)   \:	\Omega_r\big(\left\lfloor \frac{n}{2} \right\rfloor|\:\alpha, \beta \: | n \big) \:\:
				a^{r} \: (2a-b)^{-r}\\
				&=  (-1)^{\left\lfloor \frac{n}{2} \right\rfloor} \: \frac{\: (n-\left\lfloor \frac{n}{2} \right\rfloor-1)! \: \:n \: \:}{(n)! \: }
				\:	\Omega_0\big(\left\lfloor \frac{n}{2} \right\rfloor|\:\alpha, \beta \: | n \big) \:\:
				\:\\	
				&=  (-1)^{\left\lfloor \frac{n}{2} \right\rfloor} \: \frac{\:	\Omega_0\big(\left\lfloor \frac{n}{2} \right\rfloor|\:\alpha, \beta \: | n \big)  \:}{\:(n-1)(n-2) \cdots  (n-\left\lfloor \frac{n}{2} \right\rfloor) \: }.
			\end{aligned}
		\end{equation}
	From Theorem \eqref{exp1}, it follows  that
		
		\begin{equation}
			\label{k2} 
			\begin{aligned}
				&\quad \quad \Psi\left( \begin{array}{cc|c} a & b & n \\ \alpha & \beta &\left\lfloor \frac{n}{2} \right\rfloor \end{array} \right)   \:= \:  (-1)^{\left\lfloor \frac{n}{2} \right\rfloor} \: \Psi(\alpha, \beta, n).
			\end{aligned}
		\end{equation}
		From \eqref{k1} and \eqref{k2}, we immediately get the proof. 	
		\end{proof}

	Now, from Theorems \eqref{k0} and \eqref{WW3}, we get the following theorem
	
	\begin{theorem}{(The $\Omega-$representation for sums of powers)}
		\label{FA1} 
		\begin{equation}
			\label{FA2} 
			\begin{aligned}
				\frac{x^n+y^n}{(x+y)^{\delta(n)}}\:= \: \frac{\Omega_0\big(\lfloor{\frac{n}{2}}\rfloor |\:xy , - x^2 - y^2 \: |n \big) }{(n-1)(n-2) \cdots (n - \lfloor{\frac{n}{2}}\rfloor )} 	
			\end{aligned}
		\end{equation}
		
	\end{theorem}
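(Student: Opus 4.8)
The plan is to specialize the first fundamental theorem of the $\Omega$-sequence, Theorem~\ref{k0}, at the particular point $(\alpha,\beta)=(xy,\,-x^2-y^2)$, and then substitute the $\Psi$-representation for sums of powers from Theorem~\ref{WW3}. First I would record two elementary remarks that make the specialization legitimate. By Definition~\ref{omegaDef} the quantity $\Omega_r\big(k|\zeta,\xi|n\big)$ is a polynomial in $\zeta,\xi$ with integer coefficients, so replacing $\zeta$ by $xy$ and $\xi$ by $-x^2-y^2$ yields a genuine polynomial in $x,y$; similarly $\dfrac{x^n+y^n}{(x+y)^{\delta(n)}}$ is a polynomial in $x,y$, since for odd $n$ one has the divisibility $(x+y)\mid(x^n+y^n)$.

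Next I would check that the hypothesis $(\alpha,\beta)\neq(0,0)$ of Theorem~\ref{k0} is satisfied by $(\alpha,\beta)=(xy,\,-x^2-y^2)$ whenever $(x,y)\neq(0,0)$: the relation $-x^2-y^2=0$ already forces $x=y=0$ over $\mathbb{R}$, and together with $xy=0$ it forces $x=y=0$ over $\mathbb{C}$ as well. Hence Theorem~\ref{k0} gives, for all $(x,y)\neq(0,0)$,
\begin{equation*}
\Psi\big(xy,\,-x^2-y^2,\,n\big) \;=\; \frac{\Omega_0\big(\lfloor\tfrac{n}{2}\rfloor\,\big|\,xy,\,-x^2-y^2\,\big|\,n\big)}{(n-1)(n-2)\cdots(n-\lfloor\tfrac{n}{2}\rfloor)},
\end{equation*}
and Theorem~\ref{WW3} identifies the left-hand side with $\dfrac{x^n+y^n}{(x+y)^{\delta(n)}}$. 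Combining the two displays proves~\eqref{FA2} for every $(x,y)\neq(0,0)$, and since both sides are polynomials in $x,y$ (up to the fixed nonzero scalar denominator) that agree on the dense set $\{(x,y)\neq(0,0)\}$, the identity holds identically, including at the origin.

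Since the whole argument is a substitution into two previously proved theorems, I do not anticipate a real obstacle. The only points demanding any care are the admissibility check for the specialization --- that $(xy,\,-x^2-y^2)$ is not the zero point --- and the brief polynomiality remark needed to pass from the dense set to all $(x,y)$; both are routine.
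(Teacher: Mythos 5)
Your proposal is correct and follows exactly the paper's route: the paper obtains Theorem~\ref{FA1} precisely by combining Theorem~\ref{k0} (specialized at $(\alpha,\beta)=(xy,\,-x^2-y^2)$) with Theorem~\ref{WW3}. Your additional checks (that the point is nonzero when $(x,y)\neq(0,0)$ and the polynomiality/density remark) are sensible bits of care that the paper leaves implicit, but they do not change the argument.
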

	
		Before exploring the second fundamental theorem of Omega sequence, we need to define the Omega space at level $n$ as following.
	\section{The Omega space at level \texorpdfstring{$n$}{}}
	
	\begin{definition}{(The Omega space at level $n$)} \\
		For any given natural number $n$, we define the Omega space, $\omega(n)$, at level $n$ as following  
		\begin{equation}
			\label{Space} 
			\begin{aligned}
			\omega(n):= \{ (a,b)| \: \Psi(a,b,n) \neq 0 \}.
			\end{aligned}
		\end{equation}
	\end{definition}

Here are some specific examples for points  belonging to Omega space at level $n$. 
\begin{equation}
	\label{S1} 
	\begin{aligned}
		\blacksquare \quad	&\text{If}\: \: &n \equiv \pm 1       \pmod{8} \quad  &\Rightarrow  & 	\quad \: \:\Psi(1,0,n) =1 \neq 0 \:  &\Rightarrow & \quad (1,0) &\in \: \omega(n) \\					
		\blacksquare \quad	&\text{If}\: \:  &n \equiv \pm 2       \pmod{12} \quad  &\Rightarrow  & \quad 	\Psi(1,-1,n) = 1 \neq 0 \:  &\Rightarrow & \quad (1,-1) &\in \: \omega(n)\\
		\blacksquare \quad	&\text{If}\: \:  &n \equiv \pm 3       \pmod{16} \quad  &\Rightarrow  & \quad 	\Psi(1,\sqrt{2},n) = -1-\sqrt{2}\neq 0 \:  &\Rightarrow & \quad (1,\sqrt{2}) &\in \: \omega(n)\\
		\blacksquare \quad	&\text{If}\: \:  &n \equiv \pm 4      \pmod{20} \quad  &\Rightarrow  & \quad 	\Psi(1,\varphi -1,n) = -\varphi \neq 0 \:  &\Rightarrow & \quad (1,\varphi -1) &\in \: \omega(n)\\	
		\blacksquare \quad	&\text{If}\: \:  &n \equiv \pm 5       \pmod{24} \quad  &\Rightarrow  & \quad 	\Psi(1,\sqrt{3},n) =  2+ \sqrt{3}\neq 0 \:  &\Rightarrow & \quad (1,\sqrt{3}) &\in \: \omega(n)
	\end{aligned}
\end{equation}
where $\varphi$ is the Golden ratio.

Here are some specific examples for points not belonging to Omega space at level $n$. 
\begin{equation}
	\label{S11} 
	\begin{aligned}
		\blacksquare \quad	&\text{If}\: \: &n \equiv \pm 2       \pmod{8} \quad  &\Rightarrow  & 	\quad \: \:\Psi(1,0,n) =0 \:  &\Rightarrow & \quad (1,0) &\notin \: \omega(n) \\					
		\blacksquare \quad	&\text{If}\: \:  &n \equiv \pm 3       \pmod{12} \quad  &\Rightarrow  & \quad 	\Psi(1,-1,n) =0 \:  &\Rightarrow & \quad (1,-1) &\notin \: \omega(n)\\
		\blacksquare \quad	&\text{If}\: \:  &n \equiv \pm 4       \pmod{16} \quad  &\Rightarrow  & \quad 	\Psi(1,\sqrt{2},n) =0 \:  &\Rightarrow & \quad (1,\sqrt{2}) &\notin \: \omega(n)\\
		\blacksquare \quad	&\text{If}\: \:  &n \equiv \pm 5       \pmod{20} \quad  &\Rightarrow  & \quad 	\Psi(1,\varphi -1,n) =0 \:  &\Rightarrow & \quad (1,\varphi -1) &\notin \: \omega(n)\\	
		\blacksquare \quad	&\text{If}\: \:  &n \equiv \pm 6       \pmod{24} \quad  &\Rightarrow  & \quad 	\Psi(1,\sqrt{3},n) =0 \:  &\Rightarrow & \quad (1,\sqrt{3}) &\notin \: \omega(n)
	\end{aligned}
\end{equation}
 
These examples suggest one to define the Kernel of Omega space for further research developments. 
\begin{definition}{(The Kernel of the Omega space at level $n$)} 
	For any given natural number $n$, we define the Kernel of the Omega space, $Ker_{\omega}(n)$, at level $n$ as following  
	\begin{equation}
		\label{kernel} 
		\begin{aligned}
			Ker_{\omega}(n):= \{ (a,b)| \: \Psi(a,b,n) = 0 \}.
		\end{aligned}
	\end{equation}
\end{definition}

From \eqref{WW4}, we immediately get 
\[       \Psi(xy,-x^2-y^2,n) \neq 0                             \]

for any integers $x,y$ where $x\neq -y $. Hence we
 get 	
\begin{theorem}{ }
	\label{sH} 	For any natural number $n$, the space $\omega(n)$ is infinite and include all the integer points $(xy,-x^2-y^2)$ where $x,y$ any integers such that $x \neq -y$. 
\end{theorem}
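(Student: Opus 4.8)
The plan is to combine Theorem~\ref{WW3} with Theorem~\ref{k0} so that the infinitude of $\omega(n)$ reduces to the elementary fact that $\Psi(xy,-x^2-y^2,n)\neq 0$ for suitable integer choices of $x,y$. First I would recall from Theorem~\ref{WW3} that
\begin{equation*}
\Psi(xy,-x^2-y^2,n) = \frac{x^n+y^n}{(x+y)^{\delta(n)}}
\end{equation*}
for any natural number $n$, this being a polynomial identity in $x,y$ and hence valid for any integer substitution with $x+y\neq 0$. So the point $(xy,-x^2-y^2)$ lies in the kernel $Ker_{\omega}(n)$ exactly when the right-hand side vanishes, and lies in $\omega(n)$ otherwise. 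Thus the whole statement follows once I exhibit infinitely many integer pairs $(x,y)$ with $x\neq -y$ for which $\dfrac{x^n+y^n}{(x+y)^{\delta(n)}}\neq 0$, and for which the resulting points $(xy,-x^2-y^2)$ are genuinely distinct.

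Next I would check nonvanishing of $x^n+y^n$ (resp.\ $(x^n+y^n)/(x+y)$ when $n$ is odd). The cleanest choice is $y=1$: then $x^n+1$ vanishes over the integers only for $x=-1$ (when $n$ is odd) or never (when $n$ is even), so for every integer $x\ge 1$ the numerator $x^n+1$ is a positive integer, in particular nonzero; and when $n$ is odd, $x+1\mid x^n+1$ with quotient $x^{n-1}-x^{n-2}+\cdots+1>0$, again nonzero. Hence $(x,1)$ with $x\ge 1$ always gives $\Psi(x,-x^2-1,n)\neq 0$, so $(x,-x^2-1)\in\omega(n)$. Since the second coordinate $-x^2-1$ is strictly decreasing in $x\ge 1$, these points are pairwise distinct, which already shows $\omega(n)$ is infinite.

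For the full assertion — that $\omega(n)$ contains \emph{all} integer points of the form $(xy,-x^2-y^2)$ with $x\neq -y$ — I would argue that for such $x,y$ the quantity $\frac{x^n+y^n}{(x+y)^{\delta(n)}}$ is never zero: if $n$ is even, $x^n+y^n=0$ forces $x=y=0$, contradicting $x\neq -y$ (or handled trivially); if $n$ is odd, $x^n+y^n=(x+y)(x^{n-1}-x^{n-2}y+\cdots+y^{n-1})$, and after dividing by $x+y$ we are left with a value that vanishes only if $x^{n-1}-x^{n-2}y+\cdots+y^{n-1}=0$, which for real (in particular integer) $x,y$ with $(x,y)\neq(0,0)$ cannot happen since $x^n+y^n$ and $x+y$ have the same sign — more carefully, writing $t=x/y$ (assuming $y\neq0$) the polynomial $t^{n-1}-t^{n-2}+\cdots+1$ has no real roots for odd $n-1$...

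Actually let me reconsider — I should be careful about the odd case. If $n$ is odd and $y \neq 0$, then $\frac{x^n+y^n}{x+y} = y^{n-1}\cdot\frac{t^n+1}{t+1}$ where $t=x/y$; the rational function $\frac{t^n+1}{t+1} = t^{n-1}-t^{n-2}+\cdots-t+1$ takes the value $n/2$... no. At $t=1$ it equals... for $n$ odd, $\frac{1+1}{1+1}=1\neq 0$; in general this is $\sum_{j=0}^{n-1}(-1)^j t^j$, which for $t>0$ is positive (pairing consecutive terms when $t<1$, or it equals $\frac{t^n+1}{t+1}>0$ directly), and for $t<0$ equals $\frac{t^n+1}{t+1}$ with numerator and denominator of matching behavior... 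Let me just say: for integers $x,y$ with $x+y\neq 0$ and $n$ odd, $\frac{x^n+y^n}{x+y}$ is a nonzero integer because $x^n+y^n=0 \iff x=-y$ (for odd $n$), which is excluded. This is the clean argument and covers everything.

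\begin{proof}[Proof sketch]
Let $n$ be a natural number and let $x,y$ be integers with $x\neq -y$. By Theorem~\ref{WW3}, the polynomial identity
\begin{equation*}
\Psi(xy,\,-x^2-y^2,\,n) \;=\; \frac{x^n+y^n}{(x+y)^{\delta(n)}}
\end{equation*}
holds; substituting the integers $x,y$ (legitimate since $x+y\neq 0$) we must check that the right-hand side is nonzero. If $n$ is even, $\delta(n)=0$ and $x^n+y^n\neq 0$ unless $x=y=0$, which is excluded by $x\neq -y$ together with $(x,y)\neq(0,0)$ (if $(x,y)=(0,0)$ then $x=-y$). If $n$ is odd, $x^n+y^n=(x+y)\bigl(x^{n-1}-x^{n-2}y+\cdots+y^{n-1}\bigr)$, and since $x^n+y^n=0$ for odd $n$ forces $x=-y$, which is excluded, the cofactor $\dfrac{x^n+y^n}{x+y}$ is a nonzero integer. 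In either case $\Psi(xy,-x^2-y^2,n)\neq 0$, so $(xy,-x^2-y^2)\in\omega(n)$ by the definition \eqref{Space}. This proves that $\omega(n)$ contains every integer point of the stated form. Finally, taking $y=1$ and letting $x$ range over the positive integers yields the points $(x,-x^2-1)$, whose second coordinates $-x^2-1$ are all distinct; hence $\omega(n)$ is infinite.
\end{proof}

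The main obstacle, such as it is, lies entirely in the odd-$n$ case: one must be sure that dividing $x^n+y^n$ by $x+y$ cannot produce zero for some clever integer pair with $x\neq -y$. The resolution is the factorization above together with the fact that the only real zero of $x^n+y^n$ (for odd $n$) is the line $x=-y$; no subtler bound is needed. Everything else is a direct appeal to Theorem~\ref{WW3} and the definition of $\omega(n)$.
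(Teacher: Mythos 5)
Your proposal is correct and follows the same route as the paper: both rest on the identity $\Psi(xy,-x^2-y^2,n)=\frac{x^n+y^n}{(x+y)^{\delta(n)}}$ from Theorem \ref{WW3} and the observation that this is nonzero for integers $x,y$ with $x\neq -y$. You simply spell out the nonvanishing (even/odd cases) and the distinctness of the points $(x,-x^2-1)$ more explicitly than the paper, which asserts these steps as immediate.
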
 

	For any natural number $n > 1$, we should observe from formula \eqref{k00} that  \[ \Psi(\alpha,\beta,n)\neq 0  \quad \iff \quad  \Omega_0\big(\left\lfloor \frac{n}{2}  \right\rfloor|\:\alpha, \beta\: | n \big) \neq 0.\]
	Hence, the following ratio 
	\begin{equation}
		\label{space1} 
		\begin{aligned}
 \: \frac{\:	\Omega_0\big(\left\lfloor \frac{n}{2} \right\rfloor|\:\alpha, \beta\: | n \big)  \:}{ 	\Psi(\alpha, \beta, n) 	     }    		
		\end{aligned}
	\end{equation}	
is well-defined for any point $(\alpha,\beta) \in \omega(n)$. Consequently, for any point $(\alpha,\beta) \in \omega(n)$, the product 
\[      	\:(n-1)(n-2) \cdots  (n-\left\lfloor \frac{n}{2} \right\rfloor) \:    \]
can be represented as following 	
			
		\begin{equation}
			\label{space2} 
			\begin{aligned}
				\:(n-1)(n-2) \cdots  (n-\left\lfloor \frac{n}{2} \right\rfloor) \:    \:= \: \frac{\:	\Omega_0\big(\left\lfloor \frac{n}{2} \right\rfloor|\:\alpha, \beta\: | n \big)  \:}{ 	\Psi(\alpha, \beta, n) 	     }.    		
			\end{aligned}
		\end{equation}	
	
	Substitute $n$ by $2n$ in \eqref{space2}  we get the following theorem
	
	\begin{theorem}{ (The first  fundamental theorem of $\Omega-$sequence) (version 2)}\\
		\label{space3} 
		For any natural number $n$, and any point $(\alpha,\beta) \in \omega(2n)$, we get
	
		\begin{equation}
			\label{space4} 
		\begin{aligned}
			\:n(n+1)  \dotsc (2n-1) \:    \:= \: \frac{\:	\Omega_0\big(n|\:\alpha, \beta\: | 2n \big)  \:}{ 	\Psi(\alpha, \beta, 2n) 	     }.    		
		\end{aligned}
	\end{equation}	
\end{theorem}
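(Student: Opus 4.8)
The plan is to obtain this statement as an immediate specialization of equation~\eqref{space2}, which was itself derived from Theorem~\ref{k0}. First I would recall that, by the definition of the Omega space, the hypothesis $(\alpha,\beta)\in\omega(2n)$ means precisely that $\Psi(\alpha,\beta,2n)\neq 0$; this is exactly what is needed for the quotient on the right-hand side of \eqref{space4} to be well-defined, and it is worth emphasizing that it is $\Psi(\alpha,\beta,2n)$, not $\Psi(\alpha,\beta,n)$, that must be nonzero, which is why the hypothesis is phrased at level $2n$.

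Next I would invoke Theorem~\ref{k0} (equivalently, relation~\eqref{space2}) with the level parameter taken to be $2n$ in place of $n$. Since $\left\lfloor \frac{2n}{2}\right\rfloor = n$, formula~\eqref{k00} becomes
\[
\Psi(\alpha,\beta,2n) \;=\; \frac{\Omega_0\big(n\,|\,\alpha,\beta\,|\,2n\big)}{(2n-1)(2n-2)\cdots(2n-n)},
\]
and, because $\Psi(\alpha,\beta,2n)\neq 0$, we may rearrange this to
\[
(2n-1)(2n-2)\cdots(2n-n) \;=\; \frac{\Omega_0\big(n\,|\,\alpha,\beta\,|\,2n\big)}{\Psi(\alpha,\beta,2n)}.
\]
The only remaining step is a cosmetic reindexing of the product on the left: the descending product $(2n-1)(2n-2)\cdots(n+1)\cdot n$ consists of exactly $n$ factors, namely every integer from $n$ up through $2n-1$, so it coincides with the ascending product $n(n+1)\cdots(2n-1)$. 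Substituting this identity yields \eqref{space4} verbatim.

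I do not anticipate any genuine obstacle: the entire content is carried by Theorem~\ref{k0}, and beyond it one needs only the substitution $n\mapsto 2n$, the elementary fact $\left\lfloor\frac{2n}{2}\right\rfloor=n$, and the trivial reindexing of the descending product into an ascending one. The single point requiring care is bookkeeping the floor function and the index range of the product correctly, so that the left-hand side genuinely reads $n(n+1)\cdots(2n-1)$ rather than, say, a product with one too many or too few factors.
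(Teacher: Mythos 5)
Your proposal is correct and follows exactly the paper's own route: the paper derives relation \eqref{space2} by dividing \eqref{k00} by $\Psi(\alpha,\beta,n)\neq 0$ on $\omega(n)$ and then substitutes $n\mapsto 2n$, which is precisely your argument, including the observation that $\left\lfloor\frac{2n}{2}\right\rfloor=n$ and the reindexing of $(2n-1)(2n-2)\cdots n$ as $n(n+1)\cdots(2n-1)$. No gaps.
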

	The following unexpected theorem proves that any new prime must be a factor to Omega sequence.
	\begin{notation} 
		Everywhere below, $p_k$ denotes the $k$-th prime, $k >1$. 
	\end{notation}
	\section{The emergence of a new prime number}

	In 1850, Chebyshev proved Bertrand postulate (1845) which states that for any integer $m > 3$ there always exists at least one prime number $p$ with $m < p < 2 m-2$. The postulate is also called the Bertrand–Chebyshev theorem. For a proof of the theorem and for some additional information, for example, see \cite{24}. Therefore, from Bertrand–Chebyshev theorem, one get 	
	\begin{equation}
		\label{space44} 
		\begin{aligned}
		p_{k+1} \: \vert \: 	\:(p_{k}+1)(p_{k}+2)  \dotsc (2{p_k}-3).    		
		\end{aligned}
	\end{equation}

Hence 
	\begin{equation}
	\label{space44e} 
	\begin{aligned}
		p_{k+1} \: \vert \: 	\: p_{k}  (p_{k}+1)(p_{k}+2)  \dotsc (2{p_k}-1).    		
	\end{aligned}
\end{equation}
	
From \eqref{space4}, put $ n= p_{k}$, we get  
\begin{equation}
	\label{space444} 
	\begin{aligned}
		\:p_{k}(p_{k}+1)  \dotsc (2{p_k}-1) \:    \:= \: \frac{\:	\Omega_0\big(p_{k}|\:\alpha, \beta\: | 2p_{k} \big)  \:}{ 	\Psi(\alpha, \beta, 2p_{k}) 	     } 		
	\end{aligned}
\end{equation}	
where $(\alpha,\beta) \in \omega(p_{k})$. Hence from  \eqref{space44e}, and \eqref{space444}, we get the following desirable result

		\begin{theorem}{ (The second fundamental theorem of $\Omega-$sequence)}\\
		\label{space5} 
		For any point $(\alpha,\beta) \in \omega(2p_{k})$, the ratio
		
		\begin{equation}
			\label{space6E} 
			\begin{aligned}
			\frac{\:	\Omega_0\big(p_{k}|\:\alpha, \beta\: | 2p_{k} \big)  \:}{ 	\Psi(\alpha, \beta, 2p_{k}) 	     }
			\end{aligned}
		\end{equation}	
	is integer. And 	
			\begin{equation}
			\label{space6} 
			\begin{aligned}
		p_{k+1}\:  \vert \: \frac{\:	\Omega_0\big(p_{k}|\:\alpha, \beta\: | 2p_{k} \big)  \:}{ 	\Psi(\alpha, \beta, 2p_{k}) 	     }.
			\end{aligned}
		\end{equation}	
	  	Moreover, for any finite set $\varpi \subset \omega(2p_{k}) $, and any finite set $I$ of integers we get 
	  	
	  	\begin{equation}
	  		\label{space8} 
	  		\begin{aligned}
	  					p_{k+1}\:  \: \vert  \: \sum_{\substack{(\alpha,\beta) \in \varpi \\ \lambda \in I}} \: \lambda \: \frac{\:	\Omega_0\big(p_{k}|\:\alpha, \beta\: | 2p_{k} \big)  \:}{ 	\Psi(\alpha, \beta, 2p_{k}) 	     }.    		
	  		\end{aligned}
	  	\end{equation}	
	  \end{theorem}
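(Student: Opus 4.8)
The plan is to deduce Theorem~\ref{space5} directly from the machinery already assembled, treating it as essentially a bookkeeping exercise on top of Theorem~\ref{space3} and the Bertrand--Chebyshev divisibility. First I would invoke Theorem~\ref{space3} with $n = p_k$: since $(\alpha,\beta)\in\omega(2p_k)$ means $\Psi(\alpha,\beta,2p_k)\neq 0$, the ratio in \eqref{space6E} is well-defined and equals the integer product $p_k(p_k+1)\cdots(2p_k-1)$; this settles the integrality claim and gives the explicit identity \eqref{space444}. Next, Bertrand's postulate in the form stated (there is a prime strictly between $m$ and $2m-2$ for $m>3$) applied to $m=p_k$ with $k>1$ guarantees a prime strictly between $p_k$ and $2p_k-2$, and by the definition of consecutive primes that prime must be $p_{k+1}$; hence $p_k < p_{k+1} \le 2p_k-3$, so $p_{k+1}$ appears as one of the factors in $(p_k+1)(p_k+2)\cdots(2p_k-3)$, giving \eqref{space44} and therefore \eqref{space44e}. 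Combining \eqref{space44e} with \eqref{space444} yields \eqref{space6} immediately.

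For the final assertion \eqref{space8} I would simply observe that \eqref{space6} holds for \emph{each} point $(\alpha,\beta)\in\varpi\subset\omega(2p_k)$ — each summand $\dfrac{\Omega_0(p_k|\alpha,\beta|2p_k)}{\Psi(\alpha,\beta,2p_k)}$ equals the same integer $p_k(p_k+1)\cdots(2p_k-1)$, which is divisible by $p_{k+1}$ — so any $\mathbb{Z}$-linear combination $\sum_{(\alpha,\beta)\in\varpi,\ \lambda\in I}\lambda\,\dfrac{\Omega_0(p_k|\alpha,\beta|2p_k)}{\Psi(\alpha,\beta,2p_k)}$ is a $\mathbb{Z}$-multiple of $p_{k+1}$. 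Since divisibility by a fixed integer is preserved under integer linear combinations, \eqref{space8} follows with no further work.

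The only genuine subtlety — and the step I would write out most carefully — is the passage from Bertrand's postulate to \eqref{space44}: one must be sure that the prime furnished between $p_k$ and $2p_k-2$ really is the \emph{next} prime $p_{k+1}$ and, crucially, that it is at most $2p_k-3$ so that it divides the truncated product ending at $2p_k-3$ rather than needing the full product up to $2p_k-1$. The hypothesis $k>1$ (so $p_k\ge 3$, and in fact the relevant bound $m>3$ forces $p_k\ge 5$ for the strict two-sided version to apply cleanly) is exactly what is needed here; for small $k$ one can check the claim by hand. Everything else is a direct substitution into previously proved identities, so I expect no real obstacle beyond stating these edge cases precisely.
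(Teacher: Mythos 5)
Your proposal is correct and follows essentially the same route as the paper: apply Theorem~\ref{space3} with $n=p_k$ to identify the ratio with the integer $p_k(p_k+1)\cdots(2p_k-1)$, use Bertrand--Chebyshev to place $p_{k+1}$ among the factors (via $p_{k+1}\le 2p_k-3$), and pass to integer linear combinations for \eqref{space8}. Your explicit attention to the small-$k$ edge case (where the $m>3$ hypothesis of Bertrand's postulate does not apply, e.g.\ $p_k=3$, and the divisibility must be checked directly via the factor $2p_k-1$) is a point the paper leaves implicit.
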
 
We should observe the following generalization for Theorem \eqref{space5}

\begin{theorem}
	\label{gen1} 
	For any point $(\alpha,\beta) \in \omega(2p_{k})$, the ratio 
		
	\begin{equation}
		\label{gen11} 
		\begin{aligned}			
		 \frac{\:	\Omega_0\big(p_{k}|\:\alpha, \beta\: | 2p_{k} \big)  \:}{ 	 p_k(2p_{k}-1)(2p_k-2) \Psi(\alpha, \beta, 2p_{k}) } 			
		\end{aligned}
	\end{equation}
is integer. Moreover 		
	\begin{equation}
		\label{gen111} 
		\begin{aligned}			
			p_{k+1}\:  \vert \: \frac{\:	\Omega_0\big(p_{k}|\:\alpha, \beta\: | 2p_{k} \big)  \:}{ 	 p_k(2p_{k}-1)(2p_k-2) \Psi(\alpha, \beta, 2p_{k}) }.	
		\end{aligned}
	\end{equation}	
\end{theorem}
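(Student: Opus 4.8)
The plan is to deduce Theorem~\ref{gen1} directly from Theorem~\eqref{space3} (equivalently, from the identity \eqref{space444} already recorded above) together with the Bertrand--Chebyshev theorem, simply by cancelling three explicit factors from a product of consecutive integers.

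\textbf{Step 1: rewrite the ratio as a product of consecutive integers.} First I would invoke \eqref{space444}: since $(\alpha,\beta)\in\omega(2p_k)$, it gives
\[
p_k(p_k+1)\dotsc(2p_k-1) \;=\; \frac{\Omega_0\big(p_k|\,\alpha,\beta\,|\,2p_k\big)}{\Psi(\alpha,\beta,2p_k)}.
\]
Dividing both sides by $p_k(2p_k-1)(2p_k-2)$ produces exactly the ratio appearing in \eqref{gen11}. On the left-hand side the three deleted numbers $p_k$, $2p_k-2$, $2p_k-1$ are literally among the consecutive integers $p_k,p_k+1,\dots,2p_k-1$ --- they are the first factor and the last two --- so after cancellation the ratio in \eqref{gen11} equals the product of consecutive integers $(p_k+1)(p_k+2)\dotsc(2p_k-3)$. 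This is the empty product $1$ when $p_k=3$ and an honest product of positive integers when $p_k\ge5$; in either case it is an integer, which establishes the first assertion of the theorem.

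\textbf{Step 2: locate $p_{k+1}$ among the surviving factors.} Next I would apply the Bertrand--Chebyshev theorem with $m=p_k$ (valid since $p_k>3$ whenever Bertrand's postulate is in force): it furnishes a prime strictly between $p_k$ and $2p_k-2$. Such a prime exceeds $p_k$, hence is at least $p_{k+1}$, and is less than $2p_k-2$, hence at most $2p_k-3$; therefore $p_k+1\le p_{k+1}\le 2p_k-3$. Thus $p_{k+1}$ is one of the factors of $(p_k+1)(p_k+2)\dotsc(2p_k-3)$ --- this is precisely \eqref{space44}. Consequently $p_{k+1}$ divides that product, and by Step~1 it divides the ratio in \eqref{gen11}, which is \eqref{gen111}.

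\textbf{On the main obstacle.} There is essentially no analytic content here beyond Theorem~\eqref{space3} and Bertrand's postulate, which is why the result is advertised as a ``generalization'' of Theorem~\eqref{space5} rather than as an independent theorem. The only points needing care are bookkeeping: (i) confirming that $p_k$, $2p_k-1$, $2p_k-2$ genuinely occur in the consecutive run $p_k,\dots,2p_k-1$, so that the quotient is again a run of consecutive integers and hence manifestly an integer rather than merely a rational number; and (ii) checking that the prime produced by Bertrand's postulate is exactly $p_{k+1}$ and that it lands in the surviving interval $[\,p_k+1,\,2p_k-3\,]$ --- both immediate once one recalls that $p_{k+1}$ is by definition the smallest prime exceeding $p_k$.
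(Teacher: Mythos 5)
Your proposal is correct and is essentially the argument the paper intends: the paper offers no separate proof of Theorem \ref{gen1}, presenting it as an observation that follows from \eqref{space444} (equivalently Theorem \ref{space3}) by cancelling the factors $p_k$, $2p_k-1$, $2p_k-2$ and then invoking Bertrand--Chebyshev as in \eqref{space44}, which is exactly your Steps 1 and 2. If anything you are more careful than the paper, since your parenthetical remarks about the empty product and the requirement $p_k>3$ in Bertrand's postulate correctly isolate the edge case $p_k=3$ (where $2p_k-1=p_{k+1}$ is cancelled and \eqref{gen111} in fact fails), a restriction the paper leaves implicit.
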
 

We should also observe the following specialization for Theorem \eqref{space5}
	\begin{theorem}
		\label{gen2} 
For any point $(\alpha,\beta)$, we get the following property 
		
		\begin{equation}
			\label{gen22} 
			\begin{aligned}
				p_{k+1}\:  \vert \: \Omega_0\big(p_{k}|\:\alpha, \beta\: | 2p_{k} \big).						
			\end{aligned}
		\end{equation}	
	\end{theorem}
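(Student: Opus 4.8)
The plan is to obtain Theorem~\ref{gen2} as the ``denominator-free'' refinement of Theorem~\ref{space5}. The starting point is the first fundamental theorem of the $\Omega$-sequence, Theorem~\ref{k0}. Although \eqref{k00} is phrased as an equality of numbers for $(\alpha,\beta)\neq(0,0)$, both of its sides are polynomials in $\alpha,\beta$: the member $\Omega_0\big(\lfloor n/2\rfloor\mid\alpha,\beta\mid n\big)$ is a polynomial with integer coefficients by the recurrence \eqref{G0} (each step multiplies the previous terms by $(2\alpha-\beta)$ or by $-2\alpha$ together with an integer constant, starting from $\Omega_r(0)=1$), while $(n-1)(n-2)\cdots(n-\lfloor n/2\rfloor)$ is a nonzero integer for $n\ge 2$. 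Since the two sides of \eqref{k00} agree at every point $(\alpha,\beta)\neq(0,0)$, and polynomials are continuous, they coincide identically; that is,
\begin{equation*}
\Omega_0\big(\lfloor n/2\rfloor\mid\alpha,\beta\mid n\big) \;=\; (n-1)(n-2)\cdots(n-\lfloor n/2\rfloor)\;\Psi(\alpha,\beta,n)\qquad\text{in }\ \mathbb{Z}[\alpha,\beta].
\end{equation*}

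Next I would specialize $n=2p_k$, so that $\lfloor n/2\rfloor=p_k$ and the falling product reads $(2p_k-1)(2p_k-2)\cdots(2p_k-p_k)=p_k(p_k+1)\cdots(2p_k-1)$ — this is exactly \eqref{space4} of Theorem~\ref{space3} with the denominator cleared — giving
\begin{equation*}
\Omega_0\big(p_k\mid\alpha,\beta\mid 2p_k\big)\;=\;p_k(p_k+1)\cdots(2p_k-1)\;\Psi(\alpha,\beta,2p_k).
\end{equation*}
By Theorem~\ref{comp2} (formula \eqref{comp3}) the factor $\Psi(\alpha,\beta,2p_k)$ lies in $\mathbb{Z}[\alpha,\beta]$, since the coefficients $\tfrac{n}{n-i}\binom{n-i}{i}$ are integers. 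Thus it suffices to check that $p_{k+1}$ divides the integer $p_k(p_k+1)\cdots(2p_k-1)$, and this is precisely \eqref{space44e}, itself a consequence of the Bertrand–Chebyshev theorem: the prime it guarantees in $(p_k,\,2p_k-2)$ is $\ge p_{k+1}$ and $<2p_k$, so $p_{k+1}$ occurs among the factors $p_k,p_k+1,\dots,2p_k-1$.

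Combining the two displays, $\Omega_0\big(p_k\mid\alpha,\beta\mid 2p_k\big)$ equals $p_{k+1}$ times a polynomial with integer coefficients, which yields \eqref{gen22}, both as a divisibility in $\mathbb{Z}[\alpha,\beta]$ and at every integer point $(\alpha,\beta)$. I expect no genuine obstacle here: the only delicate point is that Theorem~\ref{space5} carries the hypothesis $(\alpha,\beta)\in\omega(2p_k)$ because its proof divides through by $\Psi(\alpha,\beta,2p_k)$, whereas the present statement needs no such restriction — when $\Psi(\alpha,\beta,2p_k)=0$ the product formula forces $\Omega_0\big(p_k\mid\alpha,\beta\mid 2p_k\big)=0$, which is trivially a multiple of $p_{k+1}$ (and this also covers the degenerate value $(\alpha,\beta)=(0,0)$ via the polynomial extension). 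So the real content is just the bookkeeping observation that \eqref{k00} is a polynomial identity rather than merely a numerical one, after which Bertrand's postulate does the rest.
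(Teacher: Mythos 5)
Your argument is correct and follows essentially the same route as the paper: clear the denominator in the first fundamental theorem (Theorem~\ref{k0} specialized to $n=2p_k$, i.e., Theorem~\ref{space3}) to get $\Omega_0\big(p_k\mid\alpha,\beta\mid 2p_k\big)=p_k(p_k+1)\cdots(2p_k-1)\,\Psi(\alpha,\beta,2p_k)$, then invoke Bertrand--Chebyshev as in \eqref{space44e}. Your explicit handling of the polynomial identity and of the points with $\Psi(\alpha,\beta,2p_k)=0$ only makes precise what the paper leaves implicit when it calls Theorem~\ref{gen2} a specialization of Theorem~\ref{space5}.
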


	\section{Some useful special cases }
	
	\subsection{\texorpdfstring{$\Psi(1,1,n)$}{}} For $\alpha=1,\: \beta=1$ we get the following formula for the $\Psi$-sequence
	\begin{equation}
		\label{PP0}
		\Psi(1,1,n) =
		\begin{cases}
			+2	      &   n \equiv \pm 0       \pmod{6} \\
			+1	      &   n \equiv \pm 1       \pmod{6} \\
			-1        &   n \equiv \pm 2       \pmod{6} \\
			-2	      &   n \equiv \pm 3      \pmod{6} \\
		\end{cases}   
	\end{equation}
	Therefore, we get
	
	\begin{theorem}{}
		\label{PP00}
		\begin{equation}
			\label{PP1} 
			\: \frac{\:	\Omega_0\big(\left\lfloor \frac{n}{2} \right\rfloor|\:1, 1\: | n \big)  \:}{\:(n-1)(n-2) \cdots  (n-\left\lfloor \frac{n}{2} \right\rfloor)} \:=
			\begin{cases}
				+2	      &   n \equiv \pm 0       \pmod{6} \\
				+1	      &   n \equiv \pm 1       \pmod{6} \\
				-1        &   n \equiv \pm 2       \pmod{6} \\
				-2	      &   n \equiv \pm 3      \pmod{6} \\
			\end{cases}    
		\end{equation}
	\end{theorem}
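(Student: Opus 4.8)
The statement to prove, Theorem~\ref{PP00}, is an immediate specialization of the first fundamental theorem of $\Omega$-sequence (Theorem~\ref{k0}) to the point $(\alpha,\beta)=(1,1)$, combined with the explicit closed form \eqref{PP0} for $\Psi(1,1,n)$. So the plan has two logically separate parts: invoke the already-established machinery, and then verify the $6$-periodic closed form for $\Psi(1,1,n)$.

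First I would apply Theorem~\ref{k0} with $\alpha=\beta=1$. Since $(1,1)\neq(0,0)$, that theorem tells us directly that the ratio
\begin{equation*}
\frac{\Omega_0\big(\lfloor \tfrac{n}{2}\rfloor | 1,1 | n\big)}{(n-1)(n-2)\cdots(n-\lfloor \tfrac{n}{2}\rfloor)}
\end{equation*}
is an integer and equals $\Psi(1,1,n)$. Hence everything reduces to computing $\Psi(1,1,n)$ explicitly. For this I would use either the defining recurrence \eqref{def0}, which for $a=b=1$ reads $\Psi(n+1)=(2\cdot1-1)^{\delta(n)}\Psi(n)-\Psi(n-1)=\Psi(n)-\Psi(n-1)$ when $n$ is odd and $\Psi(n+1)=\Psi(n)-\Psi(n-1)$ when $n$ is even — wait, one must be careful: $(2a-b)^{\delta(n)} = 1^{\delta(n)} = 1$ regardless of parity, so in fact $\Psi(n+1)=\Psi(n)-\Psi(n-1)$ for all $n$. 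This is the Fibonacci-like recurrence with characteristic roots the primitive sixth roots of unity, so $\Psi$ is periodic with period $6$. Alternatively, and more cleanly, I would substitute $a=b=1$ into the closed form \eqref{comp1}: there $2a-b=1$ and $\frac{b+2a}{b-2a}=3$, giving $\Psi(1,1,n)=\frac{1}{2^n}\big((1+\sqrt3)^n+(1-\sqrt3)^n\big)$; writing $1\pm\sqrt3 = 2(\cos(\pm\pi/3)+\text{something})$ — more precisely $\frac{1+\sqrt 3}{2}$ and $\frac{1-\sqrt3}{2}$ have modulus $1$ and argument $\pm\pi/3$, so $\Psi(1,1,n)=2\cos(n\pi/3)$, which takes exactly the values $2,1,-1,-2,-1,1$ as $n$ runs through residues $0,1,2,3,4,5\pmod 6$, matching \eqref{PP0} after folding $n\equiv 4,5$ into $n\equiv \pm 2,\pm 1$.

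Concretely the steps I would carry out, in order: (1) state that $(1,1)\in$ the hypothesis set of Theorem~\ref{k0}; (2) quote \eqref{k00} to get the ratio equals $\Psi(1,1,n)$ and is an integer; (3) compute $\Psi(1,1,n)$ either by checking the first few values from \eqref{def0} and invoking periodicity of the linear recurrence $\Psi(n+1)=\Psi(n)-\Psi(n-1)$, or by plugging $a=b=1$ into \eqref{comp1} and simplifying to $2\cos(n\pi/3)$; (4) tabulate the six residues mod $6$ to recover \eqref{PP0}; (5) substitute into step (2). The only mild subtlety — not really an obstacle — is bookkeeping the sign/residue identification (matching $n\equiv 4,5\pmod 6$ with $n\equiv \pm 2,\pm 1\pmod 6$) and confirming $\delta(n)$ plays no role here because $2a-b=1$; once that is observed, the proof is essentially a one-line citation of Theorem~\ref{k0} plus a routine evaluation of $\Psi(1,1,n)$.
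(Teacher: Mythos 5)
Your proposal is correct and takes essentially the same route as the paper: the paper also just evaluates $\Psi(1,1,n)$ (the $6$-periodic values \eqref{PP0}, coming from the recurrence \eqref{def0}, which for $a=b=1$ is $\Psi(n+1)=\Psi(n)-\Psi(n-1)$) and then cites Theorem \ref{k0} with $(\alpha,\beta)=(1,1)$. One small slip in your alternative closed-form route: with $a=b=1$ one has $\frac{b+2a}{b-2a}=-3$, so the relevant numbers are $\frac{1\pm i\sqrt{3}}{2}=e^{\pm i\pi/3}$ (not $\frac{1\pm\sqrt{3}}{2}$), which indeed gives $\Psi(1,1,n)=2\cos(n\pi/3)$; your recurrence argument is already complete on its own.
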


\subsection{\texorpdfstring{$\Psi(1,0,n)$}{}} For $\alpha=1,\: \beta=0$ we get the following formula for the $\Psi$-sequence
\begin{equation}
	\label{PP0Q}
	\Psi(1,0,n) \:=
	\begin{cases}
		+2	      &   n \equiv \pm 0       \pmod{8} \\
		+1	      &   n \equiv \pm 1       \pmod{8} \\
		\: 0	      &   n \equiv \pm 2       \pmod{8} \\				
		-1        &   n \equiv \pm 3       \pmod{8} \\
		-2	      &   n \equiv \pm 4      \pmod{8} \\
	\end{cases}     
\end{equation}
Therefore, we get

\begin{theorem}{}
	\label{PP00Q}
	\begin{equation}
		\label{PP1Q} 
		\: \frac{\:	\Omega_0\big(\left\lfloor \frac{n}{2} \right\rfloor|\:1, 0\: | n \big)  \:}{\:(n-1)(n-2) \cdots  (n-\left\lfloor \frac{n}{2} \right\rfloor)} \:=
		\begin{cases}
			+2	      &   n \equiv \pm 0       \pmod{8} \\
			+1	      &   n \equiv \pm 1       \pmod{8} \\
			\: 0	      &   n \equiv \pm 2       \pmod{8} \\				
			-1        &   n \equiv \pm 3       \pmod{8} \\
			-2	      &   n \equiv \pm 4      \pmod{8} \\
		\end{cases}        
	\end{equation}
\end{theorem}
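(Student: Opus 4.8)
The plan is to observe that, modulo the results already proved, the assertion is essentially a restatement of formula~\eqref{PP0Q}. First I would apply Theorem~\ref{k0} (the first fundamental theorem of the $\Omega$-sequence) with the point $(\alpha,\beta)=(1,0)$; since $(1,0)\neq(0,0)$ the hypothesis is satisfied, and the theorem tells us both that the ratio
\[
\frac{\Omega_0\big(\lfloor\tfrac{n}{2}\rfloor\,|\,1,0\,|\,n\big)}{(n-1)(n-2)\cdots(n-\lfloor\tfrac{n}{2}\rfloor)}
\]
is an integer and that it equals $\Psi(1,0,n)$. Hence the entire content of Theorem~\ref{PP00Q} reduces to verifying that $\Psi(1,0,n)$ is the $8$-periodic sequence $2,1,0,-1,-2,-1,0,1,\dots$ recorded in~\eqref{PP0Q}.

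To establish~\eqref{PP0Q} I would specialise the closed form~\eqref{comp1} at $a=1$, $b=0$: there $2a-b=2$ and $\tfrac{b+2a}{b-2a}=-1$, so the braced factor becomes $(1+i)^n+(1-i)^n$, the formula~\eqref{comp1} remaining valid as an algebraic identity so that the square root of $-1$ causes no difficulty. Writing $1\pm i=\sqrt2\,e^{\pm i\pi/4}$ gives $(1+i)^n+(1-i)^n=2^{1+n/2}\cos(n\pi/4)$, and since $\lfloor n/2\rfloor-\tfrac n2=-\tfrac12\delta(n)$ one obtains
\[
\Psi(1,0,n)=\frac{2^{\lfloor n/2\rfloor}}{2^n}\,2^{1+n/2}\cos\!\Big(\frac{n\pi}{4}\Big)=2^{\,1-\frac12\delta(n)}\cos\!\Big(\frac{n\pi}{4}\Big).
\]
Running $n$ through a fixed residue class modulo $8$ then yields each of the five cases: the factor $2^{1-\frac12\delta(n)}$ equals $2$ for $n$ even and $\sqrt2$ for $n$ odd, which is exactly what is needed to turn the values $\cos(n\pi/4)\in\{\,0,\pm1,\pm\tfrac{\sqrt2}{2}\,\}$ into the integers listed. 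A cleaner, complex-number-free alternative — the route I would in fact prefer — is a direct induction on the recurrence~\eqref{def0}: for $a=1,b=0$ it reads $\Psi(n+1)=2^{\delta(n)}\Psi(n)-\Psi(n-1)$; one computes $\Psi(0),\dots,\Psi(9)$ by hand, checks that $(\Psi(n+8),\Psi(n+9))$ obeys the same two-term recurrence with the same pair of consecutive starting values as $(\Psi(n),\Psi(n+1))$, concludes $8$-periodicity, and reads off the table~\eqref{PP0Q}.

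I do not foresee a real obstacle: every needed tool (Theorem~\ref{k0}, the closed form~\eqref{comp1}, the recurrence~\eqref{def0}) is already available. The only delicate points are cosmetic: the appearance of $\sqrt{-1}$ on specialising~\eqref{comp1}, which the recursive proof avoids altogether, and the boundary values $n=0,1$, where the denominator is an empty product equal to $1$ and $\Omega_0(0\,|\,1,0\,|\,n)=1$, so that the ratio is $2$ and $1$ respectively, in agreement with~\eqref{PP0Q}. Combining the identity furnished by Theorem~\ref{k0} with formula~\eqref{PP0Q} then gives the stated case analysis and finishes the proof.
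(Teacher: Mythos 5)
Your proposal is correct and follows essentially the same route as the paper: specialize Theorem \ref{k0} at $(\alpha,\beta)=(1,0)$ so that the ratio equals $\Psi(1,0,n)$, and then invoke the $8$-periodic table \eqref{PP0Q}. The only difference is that you actually verify \eqref{PP0Q} (via the closed form \eqref{comp1} or, more cleanly, by the recurrence \eqref{def0} and an $8$-periodicity check), whereas the paper simply states it; your verification is sound and fills in that asserted step.
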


	\subsection{ \texorpdfstring{{$\Psi(1,-1,n)$}}{}}
	 For $a=1,b=-1$ we get the following formula for the $\Psi$-sequence
	\begin{equation}
		\label{peroidicity1}
		\Psi(1,-1,n) =
		\begin{cases}
			+2	      &   n \equiv \pm 0       \pmod{12} \\
			+1	      &  n \equiv \pm 1 , \pm 2      \pmod{12} \\
			\:\:0        &   n \equiv \pm 3       \pmod{12} \\
			-1        &  n \equiv \pm 4, \pm 5       \pmod{12} \\
			-2	      &   n \equiv \pm 6  \pmod{12}
		\end{cases}
	\end{equation}
	Therefore, we get
	
	\begin{theorem}{}
		\label{PP}
		\begin{equation}
			\label{PP1A} 
			\: \frac{\:	\Omega_0\big(\left\lfloor \frac{n}{2} \right\rfloor|\:1, -1\: | n \big)  \:}{\:(n-1)(n-2) \cdots  (n-\left\lfloor \frac{n}{2} \right\rfloor)} \:=
			\begin{cases}
				+2	      &   n \equiv \pm 0       \pmod{12} \\
				+1	      &  n \equiv \pm 1 , \pm 2      \pmod{12} \\
				\:\:0        &   n \equiv \pm 3       \pmod{12} \\
				-1        &  n \equiv \pm 4, \pm 5       \pmod{12} \\
				-2	      &   n \equiv \pm 6  \pmod{12}
			\end{cases}  
		\end{equation}
	\end{theorem}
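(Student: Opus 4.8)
The plan is to reduce the statement to a periodicity computation for the $\Psi$-sequence and then evaluate that sequence explicitly. The key observation is that the left-hand side of \eqref{PP1A} is precisely the ratio appearing in the first fundamental theorem of the $\Omega$-sequence, Theorem~\ref{k0}, specialized to the point $(\alpha,\beta)=(1,-1)$, which is admissible since $(1,-1)\neq(0,0)$. So the first step is to invoke Theorem~\ref{k0} to conclude that the ratio in question is an integer and equals $\Psi(1,-1,n)$. After that, the theorem becomes equivalent to the claimed evaluation \eqref{peroidicity1} of $\Psi(1,-1,n)$, and this is what remains to be proved.

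To evaluate $\Psi(1,-1,n)$ I would specialize the explicit formula \eqref{comp1}. With $a=1$, $b=-1$ one has $2a-b=3$ and $\frac{b+2a}{b-2a}=-\frac13$, so that $1\pm\sqrt{\tfrac{b+2a}{b-2a}}=1\pm\frac{i}{\sqrt3}=\frac{2}{\sqrt3}\,e^{\pm i\pi/6}$. Substituting back into \eqref{comp1}, using $e^{i\theta}+e^{-i\theta}=2\cos\theta$ together with $\lfloor\frac n2\rfloor-\frac n2=-\frac{\delta(n)}{2}$, the powers of $2$ and of $3$ should telescope to the compact closed form
\[ \Psi(1,-1,n)=2\cdot 3^{-\delta(n)/2}\,\cos\!\Big(\tfrac{n\pi}{6}\Big). \]
From here everything follows by inspection: the cosine has period $12$ in $n$ and $\delta(n)$ has period $2\mid 12$, so $\Psi(1,-1,n)$ is periodic with period $12$; it is even in $n$ (since cosine is even and $\delta(-n)=\delta(n)$), which accounts for the $\pm$-symmetry of the table; and the five listed values $2,1,1,0,-1,-1,-2$ drop out by evaluating the formula at the residues $n\equiv 0,1,2,3,4,5,6\pmod{12}$. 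As a cross-check, or as an alternative route that avoids complex numbers entirely, I would instead run the defining recurrence \eqref{def0}, which for $a=1,b=-1$ reads $\Psi(n+1)=3^{\delta(n)}\Psi(n)-\Psi(n-1)$ with $\Psi(0)=2,\ \Psi(1)=1$, out to $n=13$, observe that $(\Psi(12),\Psi(13))=(2,1)=(\Psi(0),\Psi(1))$ while the multipliers $3^{\delta(n)}$ are $2$-periodic, hence $12$-periodic, and read the table off the first period.

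There is no deep obstacle here; the proof is essentially a bookkeeping exercise once Theorem~\ref{k0} is in hand. The one point requiring care is the exponent accounting in the odd-$n$ case of \eqref{comp1}, where $\lfloor n/2\rfloor$ differs from $n/2$ and produces the factor $3^{-1/2}$: one must verify that this factor is exactly what is needed for $\Psi(1,-1,n)$ to come out a genuine integer at odd $n$, and that the square-root and complex-exponential manipulations are performed on a consistent branch. One should also note that Theorem~\ref{k0} presumes the denominator $(n-1)(n-2)\cdots(n-\lfloor n/2\rfloor)$ is a nonempty (or, at worst, empty) product, so the small cases $n=1,2$ may be checked by hand — for $n=1$ the identity collapses to $\Omega_0(0\,|\,1,-1\,|\,1)=1=\Psi(1,-1,1)$, with the analogous trivial check for $n=2$. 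Combining the evaluation of $\Psi(1,-1,n)$ with Theorem~\ref{k0} then yields \eqref{PP1A}.
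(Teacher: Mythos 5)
Your proposal is correct and follows the paper's own route: specialize the first fundamental theorem of the $\Omega$-sequence (Theorem~\ref{k0}) at $(\alpha,\beta)=(1,-1)$ and combine it with the period-$12$ evaluation \eqref{peroidicity1} of $\Psi(1,-1,n)$. The only difference is that the paper simply asserts \eqref{peroidicity1}, whereas you verify it (correctly) via the closed form $\Psi(1,-1,n)=2\cdot 3^{-\delta(n)/2}\cos\big(\tfrac{n\pi}{6}\big)$ from \eqref{comp1}, or equivalently by iterating the recurrence \eqref{def0} through one full period.
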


	\subsection{Combination of Fibonacci and Lucas sequences}
For any natural number $n$, we get, from \eqref{def0}, the following relation
	
	\begin{equation}
		\label{(1,Root5)}
		\Psi(1,\sqrt{5},n) = 
		\begin{cases}
			L(\frac{n}{2}) &   n \equiv 0    \pmod{4} \\
			L(\frac{n+1}{2}) + F(\frac{n-1}{2})\sqrt{5} &  n \equiv 1 \pmod{4} \\
			- F(\frac{n}{2}) \sqrt{5}  &    n \equiv 2 \pmod{4} \\
			-L(\frac{n-1}{2})-F(\frac{n+1}{2})\sqrt{5}& n \equiv 3\pmod{4} 
		\end{cases}.
	\end{equation}
	
	Therefore, from Theorem \eqref{k0}, we immediately get the proof of the following theorem.

	\begin{theorem}{(Representation for a combinations of Fibonacci and Lucas sequences)}
		\label{FL} 
		\begin{equation}
			\label{FL1} 
			\begin{aligned}
			 \:  \frac{\:  \:\Omega_0\big(\left\lfloor \frac{n}{2} \right\rfloor |1,\sqrt{5} |n\big) }{ \: (n-1)(n-2) \cdots (n - \lfloor{\frac{n}{2}}\rfloor )}  = 
			 \begin{cases}
			 	L(\frac{n}{2}) &   n \equiv 0    \pmod{4} \\
			 	L(\frac{n+1}{2}) + F(\frac{n-1}{2})\sqrt{5} &  n \equiv 1 \pmod{4} \\
			 	- F(\frac{n}{2}) \sqrt{5}  &    n \equiv 2 \pmod{4} \\
			 	-L(\frac{n-1}{2})-F(\frac{n+1}{2})\sqrt{5}& n \equiv 3\pmod{4}
			 \end{cases}.	
			\end{aligned}
		\end{equation}
	\end{theorem}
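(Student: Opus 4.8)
The plan is to reduce Theorem~\ref{FL} entirely to Theorem~\ref{k0}: first establish the closed-form evaluation \eqref{(1,Root5)} of $\Psi(1,\sqrt5,n)$, and then specialize the first fundamental theorem of the $\Omega$-sequence at the point $(\alpha,\beta)=(1,\sqrt5)$.

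The one substantive step is to prove \eqref{(1,Root5)}, which I would do by a two-step induction on $n$ directly from the defining recurrence \eqref{def0}. Taking $a=1$, $b=\sqrt5$ gives $2a-b=2-\sqrt5$, so \eqref{def0} reads $\Psi(0)=2$, $\Psi(1)=1$, and $\Psi(n+1)=(2-\sqrt5)^{\delta(n)}\,\Psi(n)-\Psi(n-1)$. The base cases $n=0$ (where $L(0)=2$) and $n=1$ (where $L(1)+F(0)\sqrt5=1$) hold by inspection. For the inductive step I would split according to the residue of $n$ modulo $4$: in each class $\delta(n)$ is constant ($0$ when $n\equiv 0,2$ and $1$ when $n\equiv 1,3$), the half-integer arguments $\tfrac n2$, $\tfrac{n\pm1}{2}$ appearing in the claimed formulas for $\Psi(n)$ and $\Psi(n-1)$ are genuine integers, and substituting them into $\Psi(n+1)=(2-\sqrt5)^{\delta(n)}\Psi(n)-\Psi(n-1)$ and separating the rational part from the coefficient of $\sqrt5$ reduces each case to the standard Fibonacci--Lucas identities $F(m+1)=F(m)+F(m-1)$, $L(m+1)=L(m)+L(m-1)$, $L(m)=F(m+1)+F(m-1)$, and $5F(m)=L(m+1)+L(m-1)$. (Equivalently, one can read \eqref{(1,Root5)} off the closed form \eqref{comp1}: since $\tfrac{\sqrt5+2}{\sqrt5-2}=(\sqrt5+2)^2$, that formula collapses to $\Psi(1,\sqrt5,n)=(2-\sqrt5)^{\lfloor n/2\rfloor}\big(\varphi^{2n}+(-1)^n\varphi^n\big)$ with $\varphi$ the golden ratio, and $\varphi^m=\tfrac12\big(L(m)+F(m)\sqrt5\big)$ then recovers the four-case expression; the inductive route is cleaner to present.)

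With \eqref{(1,Root5)} in hand, I would invoke Theorem~\ref{k0} at $(\alpha,\beta)=(1,\sqrt5)$, which is permitted because $(1,\sqrt5)\neq(0,0)$; it gives $\Psi(1,\sqrt5,n)=\dfrac{\Omega_0\big(\lfloor n/2\rfloor|1,\sqrt5|n\big)}{(n-1)(n-2)\cdots(n-\lfloor n/2\rfloor)}$. Substituting \eqref{(1,Root5)} for the left-hand side produces exactly the four-case right-hand side of \eqref{FL1}, which completes the proof.

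The work is concentrated in the verification of \eqref{(1,Root5)}, and there the only real obstacle is bookkeeping: in each of the four residue classes one must correctly match the half-integer indices of $F$ and $L$ coming from $\Psi(n)$ and $\Psi(n-1)$ against those predicted for $\Psi(n+1)$, and keep the exponent $\delta(n)$ straight. Since every case then collapses to a one-line classical identity, no genuine difficulty arises.
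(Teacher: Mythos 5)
Your proposal follows the paper's own route exactly: it establishes the evaluation \eqref{(1,Root5)} of $\Psi(1,\sqrt{5},n)$ from the recurrence \eqref{def0} (the paper merely asserts this; your two-step induction, or the closed-form route via \eqref{comp1}, correctly supplies the omitted verification) and then specializes Theorem \ref{k0} at $(\alpha,\beta)=(1,\sqrt{5})$. The argument is correct and essentially identical to the paper's proof.
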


\subsection{The Omega sequence associated with \texorpdfstring{$(1,2)$}{}}
		From \eqref{comp3}
		\[ \Psi(1,2,n)=  \:  \frac{n}{n-\left\lfloor \frac{n}{2} \right\rfloor} \binom{n-\left\lfloor \frac{n}{2} \right\rfloor}{\left\lfloor \frac{n}{2} \right\rfloor} \:(-1)^{\left\lfloor \frac{n}{2} \right\rfloor}.      \]
		
Consequently

		\[ \Psi(1,2,n)= \:(-1)^{\left\lfloor \frac{n}{2} \right\rfloor} \:  2^{\delta(n-1)} \:n^{\delta(n)}.      \]
	Therefore, from Theorem \eqref{k0}, we immediately obtain the following explicit formula	
		
		\begin{theorem}{}
		\label{DA}
		
		\begin{equation}
			\label{AU10} 
			\Omega_0\big(\left\lfloor \frac{n}{2} \right\rfloor|1, 2|n\big) = \:(-1)^{\left\lfloor \frac{n}{2} \right\rfloor} \:  2^{\delta(n-1)} \:n^{\delta(n)}    \:(n-1)(n-2) \cdots  (n-\left\lfloor \frac{n}{2} \right\rfloor) \:.
		\end{equation}
		
	\end{theorem}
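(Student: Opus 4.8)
The plan is to obtain \eqref{AU10} as an immediate corollary of the closed form \eqref{comp3} together with the first fundamental theorem of the $\Omega$-sequence, Theorem~\ref{k0}; no new computation is needed beyond a little parity bookkeeping. First I would specialize \eqref{comp3} at $(a,b)=(1,2)$. Since $2a-b=0$ at this point, every summand of index $i<\lfloor\frac{n}{2}\rfloor$ carries a factor $(2a-b)^{\lfloor\frac{n}{2}\rfloor-i}=0$ and therefore vanishes, so the sum collapses to its top term $i=\lfloor\frac{n}{2}\rfloor$ (with the usual convention $0^0=1$):
\[
\Psi(1,2,n)=\frac{n}{n-\lfloor\frac{n}{2}\rfloor}\binom{n-\lfloor\frac{n}{2}\rfloor}{\lfloor\frac{n}{2}\rfloor}(-1)^{\lfloor\frac{n}{2}\rfloor},
\]
which is exactly the first display appearing just before the statement.

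Next I would simplify the binomial prefactor by splitting on the parity of $n$. When $n$ is even one has $\lfloor\frac{n}{2}\rfloor=n-\lfloor\frac{n}{2}\rfloor=\frac{n}{2}$, the binomial coefficient equals $1$, and the prefactor reduces to $n/\frac{n}{2}=2$; when $n$ is odd one has $n-\lfloor\frac{n}{2}\rfloor=\lfloor\frac{n}{2}\rfloor+1$, the binomial coefficient equals $\lfloor\frac{n}{2}\rfloor+1$, and the prefactor telescopes to $n$. Recording these two outcomes by means of $\delta$ and using $\delta(n)+\delta(n-1)=1$, both cases are captured uniformly as
\[
\Psi(1,2,n)=(-1)^{\lfloor\frac{n}{2}\rfloor}\,2^{\delta(n-1)}\,n^{\delta(n)},
\]
which is the ``Consequently'' line preceding the theorem.

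Finally I would invoke Theorem~\ref{k0} with $(\alpha,\beta)=(1,2)$; its only hypothesis, $(\alpha,\beta)\neq(0,0)$, is trivially satisfied, so
\[
\Psi(1,2,n)=\frac{\Omega_0\big(\lfloor\frac{n}{2}\rfloor\,\big|\,1,2\,\big|\,n\big)}{(n-1)(n-2)\cdots(n-\lfloor\frac{n}{2}\rfloor)}.
\]
Multiplying through by the denominator and substituting the formula for $\Psi(1,2,n)$ obtained in the previous step yields \eqref{AU10} verbatim.

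I do not expect a genuine obstacle here, since every ingredient is a result already proved earlier in the paper; the statement is really a corollary dressed as a theorem. The only points that need a little care are the harmless convention $0^0=1$ used when collapsing \eqref{comp3} at $2a-b=0$, and keeping the floor-function and $\delta$ bookkeeping consistent across the two parities of $n$ (including the degenerate case $n=1$, where all three displayed quantities equal $1$). One could in fact bypass the case split altogether by noting that $\frac{n}{n-\lfloor\frac{n}{2}\rfloor}\binom{n-\lfloor\frac{n}{2}\rfloor}{\lfloor\frac{n}{2}\rfloor}$ and $2^{\delta(n-1)}n^{\delta(n)}$ are merely two descriptions of the same integer.
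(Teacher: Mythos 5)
Your proposal is correct and follows essentially the same route as the paper: specialize \eqref{comp3} at $(a,b)=(1,2)$ so the sum collapses to the top term, rewrite the prefactor as $2^{\delta(n-1)}n^{\delta(n)}$ by parity, and then apply Theorem \ref{k0} with $(\alpha,\beta)=(1,2)$ and clear the denominator. Your extra remarks on the $0^{0}$ convention and the parity bookkeeping are just a more explicit version of the paper's two displayed simplifications.
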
	
		
	However, if we desire to compute all the terms of Omega sequence that is associated with $n$ and the point $(1,2)$, we need to compute the terms of Omega sequence, one by one, as following:	
	\begin{equation}
		\label{AU1W} 
		\begin{aligned}
			\Omega_r\big(k|1, 2|n\big) &= (- 2)  \: \big(n-2r-\delta(n-1)\big) \: \Omega_{r+1}\big(k-1|1, 2|n\big),  \\
			\Omega_r\big(0|1, 2|n\big) &= 1 \quad   \text{for all} \:\: r.
		\end{aligned}
	\end{equation}
	Solving \eqref{AU1W}, we immediately get

	\begin{theorem}{}
		\label{AU9}
		The Omega sequence associated with $n$ and $(1,2)$ is given by 	
		\begin{equation}
			\label{AU10A} 
			\Omega_r\big(k|1, 2|n\big) = \: (-2)^k \: \prod\limits_{\lambda = 0}^{k-1}\big(n-\delta(n+1) -2r - 2 \lambda \big).
		\end{equation}
		
	\end{theorem}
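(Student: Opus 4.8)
The plan is to exploit the degeneracy of the point $(1,2)$. Since $2\zeta-\xi = 2\cdot 1 - 2 = 0$, the first summand in the defining recurrence \eqref{G0} of Definition~\ref{omegaDef} vanishes identically, so the Omega sequence associated with $n$ and $(1,2)$ satisfies the one-term recurrence already recorded in \eqref{AU1W}, namely $\Omega_r(k|1,2|n) = -2\,(n-2r-\delta(n-1))\,\Omega_{r+1}(k-1|1,2|n)$ with $\Omega_r(0|1,2|n)=1$ for all $r$. I will take this simplified recurrence as the starting point.

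Next I would prove the closed form by induction on $k$, uniformly in $r$ (within the allowed range $0\le r+k\le\lfloor n/2\rfloor$). The base case $k=0$ is immediate: the empty product equals $1$, which matches $\Omega_r(0|1,2|n)=1$. For the inductive step, assume the formula holds at level $k-1$ for every $r$; substituting $\Omega_{r+1}(k-1|1,2|n) = (-2)^{k-1}\prod_{\lambda=0}^{k-2}(n-2(r+1)-2\lambda-\delta(n-1))$ into the recurrence and pulling the extra factor $-2\,(n-2r-\delta(n-1))$ to the front, I reindex the product by $\mu=\lambda+1$ so that $\prod_{\lambda=0}^{k-2}(n-2r-2(\lambda+1)-\delta(n-1)) = \prod_{\mu=1}^{k-1}(n-2r-2\mu-\delta(n-1))$. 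The liberated factor $n-2r-\delta(n-1)$ is exactly the missing $\mu=0$ term, so the two combine to give $\Omega_r(k|1,2|n) = (-2)^k\prod_{\mu=0}^{k-1}(n-2r-2\mu-\delta(n-1))$, completing the induction.

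Finally, to put the answer in the form stated, I would note that $\delta(n-1)=\delta(n+1)$ for every natural number $n$: both equal $1-\delta(n)$, since when $n$ is even both $n\pm1$ are odd and when $n$ is odd both $n\pm1$ are even. Replacing $\delta(n-1)$ by $\delta(n+1)$ then yields \eqref{AU10A} verbatim. There is no real obstacle here; the only step demanding a moment's care is the reindexing of the product in the inductive step, which is pure bookkeeping — the theorem is essentially the explicit unrolling of a telescoping first-order recurrence.
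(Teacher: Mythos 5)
Your proposal is correct and follows essentially the same route as the paper: the paper likewise observes that $2\zeta-\xi=0$ at $(1,2)$ reduces the defining recurrence to the one-term relation \eqref{AU1W} and then ``solves'' it, which is precisely the unrolling your induction makes explicit, together with the parity identity $\delta(n-1)=\delta(n+1)$. No gaps; your write-up is simply a more careful version of the paper's argument.
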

		
	\subsection{The Omega sequence associated with \texorpdfstring{$(0,-1)$}{}}
	
	Noting that $\Psi(0,-1,n)= 1,$ for any natural number $n$,
	we can deduce the following result
	
	\begin{theorem}{}
		\label{AU11}
		The Omega sequence associated with $n$ and $(0,-1)$ is given by 	
		\begin{equation}
			\label{AU12} 
			\Omega_r\big(k|0, -1|n\big) = \:  \prod\limits_{\lambda = 1}^{k}\big(n-r - \lambda \big).
		\end{equation}
		Moreover 
		\begin{equation}
			\label{AU13} 
			\Omega_0\big(\left\lfloor \frac{n}{2} \right\rfloor |0, -1|n\big) = \:  \:\big(n-1\big) \big(n-2 \big) \cdots  \big(n-\left\lfloor \frac{n}{2} \right\rfloor \big). 
		\end{equation}
	\end{theorem}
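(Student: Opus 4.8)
The plan is to exploit the fact that the choice $(\zeta,\xi)=(0,-1)$ makes the defining recurrence \eqref{G0} of Definition \ref{omegaDef} degenerate. Substituting $\zeta=0$, $\xi=-1$ one has $2\zeta-\xi=1$ and $2\zeta=0$, so the coupling term $-2\zeta\,(n-2r-\delta(n-1))\,\Omega_{r+1}(k-1|0,-1|n)$ vanishes identically. Consequently the two-index recurrence collapses to the single-index relation
\[
\Omega_r(k|0,-1|n) = (n-r-k)\,\Omega_r(k-1|0,-1|n),\qquad \Omega_r(0|0,-1|n)=1,
\]
valid in the admissible range $0\le r+k\le \lfloor\frac{n}{2}\rfloor$.

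Next I would prove \eqref{AU12} by induction on $k$ with $r$ fixed. The base case $k=0$ is the stated initial condition, reading the empty product on the right-hand side as $1$. For the inductive step, assuming $\Omega_r(k-1|0,-1|n)=\prod_{\lambda=1}^{k-1}(n-r-\lambda)$, the collapsed recurrence gives
\[
\Omega_r(k|0,-1|n) = (n-r-k)\prod_{\lambda=1}^{k-1}(n-r-\lambda) = \prod_{\lambda=1}^{k}(n-r-\lambda),
\]
which is exactly \eqref{AU12}. (Equivalently, one simply telescopes the one-term recurrence, no induction argument strictly needed.)

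For \eqref{AU13} I would then specialize \eqref{AU12} at $r=0$ and $k=\lfloor\frac{n}{2}\rfloor$, obtaining $\Omega_0(\lfloor\frac{n}{2}\rfloor|0,-1|n)=\prod_{\lambda=1}^{\lfloor n/2\rfloor}(n-\lambda)=(n-1)(n-2)\cdots(n-\lfloor\frac{n}{2}\rfloor)$. As an independent consistency check one can note that $\Psi(0,-1,n)=1$ for every natural number $n$ (immediate from \eqref{def0}: with $a=0$, $b=-1$ one has $2a-b=1$, so $\Psi(m+1)=\Psi(m)$, forcing $\Psi(\cdot)=\Psi(1)=1$), whence the first fundamental theorem of the $\Omega$-sequence, Theorem \ref{k0}, yields the same value of $\Omega_0(\lfloor\frac{n}{2}\rfloor|0,-1|n)$.

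There is no substantive obstacle here: the only real point is the initial observation that $\zeta=0$ decouples the recurrence in the index $r$; after that the argument is a one-line telescoping product, and the sole bookkeeping care needed is the empty-product convention in the base case $k=0$.
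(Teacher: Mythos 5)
Your proposal is correct and matches what the paper intends: with $\zeta=0$, $\xi=-1$ the coupling term in \eqref{G0} vanishes, the recurrence telescopes to \eqref{AU12}, and specializing $r=0$, $k=\lfloor\frac{n}{2}\rfloor$ gives \eqref{AU13}. The paper itself is terse and frames the deduction of \eqref{AU13} via $\Psi(0,-1,n)=1$ together with Theorem \ref{k0}, which is exactly the consistency check you include, so the two arguments are essentially the same.
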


\section{New combinatorial identity}	
From \eqref{def0}, it follows that 

\begin{equation}
	\label{ABAB} 	
	\begin{aligned}	
		\Psi(1,-2,n) = 2^{\delta(n+1)}.        
	\end{aligned}	
\end{equation}
	
	 From \eqref{ABAB}, \eqref{AU6NM} and \eqref{k00},  we immediately get the following identity

	\begin{theorem}{(New combinatorial identity)}
		\label{AU7}	
		\begin{equation}
			\label{AU8} 	
			\begin{aligned}	
				\:\big(n-1\big) \big(n-2 \big) \cdots  \big(n-\left\lfloor \frac{n}{2} \right\rfloor \big) 	&=  \\
				\: 2^{ \left\lfloor \frac{n}{2} \right\rfloor - \delta(n+1) }  \: \: \big(n+\delta(n-1) - 2\big)  \: \big(n+\delta(n-1) - 4\big) \: &\big(n+\delta(n-1) - 6\big) \cdots \: \big(3 \big)\: \big(1 \big).
			\end{aligned}	
		\end{equation}
	\end{theorem}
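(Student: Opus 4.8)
The plan is to obtain \eqref{AU8} as a one-line specialization of the first fundamental theorem of the $\Omega$-sequence, Theorem \ref{k0}. I would apply that theorem with $(\alpha,\beta)=(1,-2)$; this is legitimate since $(1,-2)\neq(0,0)$, and in fact \eqref{ABAB} shows $\Psi(1,-2,n)=2^{\delta(n+1)}\neq 0$, so the point even lies in $\omega(n)$ and the ratio in \eqref{k00} is a nonzero integer. With this choice Theorem \ref{k0} reads
\[
\Psi(1,-2,n)=\frac{\Omega_0\big(\lfloor\tfrac{n}{2}\rfloor|1,-2|n\big)}{(n-1)(n-2)\cdots(n-\lfloor\tfrac{n}{2}\rfloor)}.
\]

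Next I would substitute the two explicit evaluations already available in the excerpt: equation \eqref{ABAB} on the left-hand side, and Theorem \ref{AU5NM} (equation \eqref{AU6NM}) on the right-hand side, which produces
\[
2^{\delta(n+1)}=\frac{2^{\lfloor\frac{n}{2}\rfloor}\,\big(n+\delta(n-1)-2\big)\big(n+\delta(n-1)-4\big)\cdots(1)}{(n-1)(n-2)\cdots(n-\lfloor\frac{n}{2}\rfloor)}.
\]
Clearing the denominator and cancelling the common power $2^{\delta(n+1)}$ — which is allowed because $\delta(n+1)\le\lfloor\frac{n}{2}\rfloor$ for every $n\ge 1$, so that $\lfloor\frac{n}{2}\rfloor-\delta(n+1)$ is a non-negative integer — yields precisely
\[
(n-1)(n-2)\cdots(n-\lfloor\tfrac{n}{2}\rfloor)=2^{\lfloor\frac{n}{2}\rfloor-\delta(n+1)}\,\big(n+\delta(n-1)-2\big)\big(n+\delta(n-1)-4\big)\cdots(1),
\]
which is exactly \eqref{AU8}.

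There is essentially no obstacle here, since the whole content of the theorem is the juxtaposition of three previously established facts. The only point that needs a moment of care is the parity bookkeeping hidden in the right-hand product, namely that $\prod_{\lambda=1}^{\lfloor n/2\rfloor}\big(n+\delta(n-1)-2\lambda\big)$ really does terminate at the value $1$; but this is precisely the computation $n+\delta(n-1)-2\lfloor\frac{n}{2}\rfloor=\delta(n-1)+\delta(n)=1$ carried out just before Theorem \ref{AU5NM}, so nothing new is required. If one wishes, the two parities can be spelled out — for $n$ even the identity reads $(n-1)(n-2)\cdots(n/2)=2^{n/2-1}(n-1)(n-3)\cdots 1$, and for $n$ odd it reads $(n-1)(n-2)\cdots\tfrac{n+1}{2}=2^{(n-1)/2}(n-2)(n-4)\cdots 1$ — and each of these is in turn immediate by pairing each factor on the left with an odd factor on the right, but this cross-check is not needed for the argument above.
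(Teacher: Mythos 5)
Your argument is exactly the paper's: the identity is obtained by combining \eqref{ABAB}, \eqref{AU6NM} and \eqref{k00} at the point $(1,-2)$ and clearing the factor $2^{\delta(n+1)}$. The proposal is correct and adds only harmless extra bookkeeping (the remark that $\delta(n+1)\le\lfloor\frac{n}{2}\rfloor$ and the parity cross-check), so nothing further is needed.
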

	\subsection*{Remark}

	Put $r=0,\: k = \left\lfloor \frac{n}{2} \right\rfloor$ in \eqref{AU10A}, and compare the result with \eqref{AU10}, we also get identity \eqref{AU8}.
	
		\section{The product of the first odd primes}
Observe that 
		\begin{equation}
			\label{clear} 
			\begin{aligned}
				\prod_{i=2}^{k} p_{i}\:  \vert \: 	\: \big(2p_k-1\big)  \: \big(2p_k-3\big) \: \big(2p_k-5\big) \cdots \: \big(3 \big)\: \big(1 \big).
			\end{aligned}
		\end{equation}	
				Put $n=2p_k$ in \eqref{AU8}, and from \eqref{clear}, we deduce  
		\begin{equation}
		\label{clear2} 
		\begin{aligned}
			\prod_{i=2}^{k} p_{i}\:  \vert \: 	\:\big(2p_k-1\big) \big(2p_k-2 \big) \cdots  \big(2p_k-\left\lfloor \frac{2p_k}{2} \right\rfloor \big).
		\end{aligned}
	\end{equation}				
			From Bertrand–Chebyshev theorem, it follows that 	
				 \begin{equation}
				 	\label{clear3} 
				 	\begin{aligned}
				 		p_{k+1}\:  \vert \: 	\:\big(2p_k-1\big) \big(2p_k-2 \big) \cdots  \big(2p_k-\left\lfloor \frac{2p_k}{2} \right\rfloor \big) .
				 	\end{aligned}
				 \end{equation}	
		From \eqref{clear2}, and \eqref{clear3}, we obtain  
		 \begin{equation}
			\label{clear4} 
		\begin{aligned}
			\prod_{i=2}^{k+1} p_{i}\:  \vert \: 	\:\big(2p_k-1\big) \big(2p_k-2 \big) \cdots  \big(2p_k-\left\lfloor \frac{2p_k}{2} \right\rfloor \big).
		\end{aligned}
		\end{equation}	
		
		Now, from \eqref{space444}, we get the following desirable generalization.
		
				\begin{theorem}{(The third fundamental theorem of $\Omega-$sequence)}\\
			\label{gen5} 
			For any point $(\alpha,\beta) \in \omega(2p_{k})$, the ratio
			\begin{equation}
				\label{gen6} 
				\begin{aligned}
					\frac{\:	\Omega_0\big(p_{k}|\:\alpha, \beta\: | 2p_{k} \big)  \:}{ 	\Psi(\alpha, \beta, 2p_{k}) 	     }
				\end{aligned}
			\end{equation}	
			is integer. And 	
			\begin{equation}
				\label{gen7} 
				\begin{aligned}
					\prod_{i=2}^{k+1} p_{i}\:  \vert \: \frac{\:	\Omega_0\big(p_{k}|\:\alpha, \beta\: | 2p_{k} \big)  \:}{ 	\Psi(\alpha, \beta, 2p_{k}) 	     }.
				\end{aligned}
			\end{equation}	
			Furthermore, for any finite set $\varpi \subset \omega(2p_{k}) $, and any finite set $I$ of integers, we get 
			\begin{equation}
				\label{gen8} 
				\begin{aligned}
						\prod_{i=2}^{k+1} p_{i}\:  \: \vert  \: \sum_{\substack{(\alpha,\beta) \in \varpi \\ \lambda \in I}} \: \lambda \: \frac{\:	\Omega_0\big(p_{k}|\:\alpha, \beta\: | 2p_{k} \big)  \:}{ 	\Psi(\alpha, \beta, 2p_{k}) 	     }.    		
				\end{aligned}
			\end{equation}	
		\end{theorem}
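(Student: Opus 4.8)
The plan is to reduce Theorem~\ref{gen5} to the first fundamental theorem of the $\Omega$-sequence in its second form (Theorem~\ref{space3}) together with the divisibility~\eqref{clear4} that was just established. First I would invoke Theorem~\ref{space3} with $n$ replaced by $p_k$: the hypothesis $(\alpha,\beta)\in\omega(2p_k)$ guarantees $\Psi(\alpha,\beta,2p_k)\neq 0$, so the ratio in \eqref{gen6} is a well-defined quantity and, by \eqref{space444}, it equals the explicit product $p_k(p_k+1)\cdots(2p_k-1)$. This is a positive integer, which proves the first assertion of the theorem.

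The second step is to recognize this product inside \eqref{clear4}. Since $k>1$ we have that $p_k$ is an odd prime, hence $2p_k$ is even and $\lfloor \tfrac{2p_k}{2}\rfloor = p_k$; therefore the right-hand side of \eqref{clear4}, namely $(2p_k-1)(2p_k-2)\cdots\big(2p_k-\lfloor\tfrac{2p_k}{2}\rfloor\big)$, is exactly $(2p_k-1)(2p_k-2)\cdots p_k$, which is the product $p_k(p_k+1)\cdots(2p_k-1)$ written in reverse order. Consequently \eqref{clear4} states precisely that $\prod_{i=2}^{k+1}p_i$ divides $p_k(p_k+1)\cdots(2p_k-1)$, and combining this with the equality from the first step yields the divisibility~\eqref{gen7}.

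For the last assertion about arbitrary integer linear combinations, I would simply note that \eqref{gen7} holds term by term: for every $(\alpha,\beta)\in\varpi\subset\omega(2p_k)$ and every $\lambda\in I$, the number $\lambda\,\Omega_0(p_k|\alpha,\beta|2p_k)/\Psi(\alpha,\beta,2p_k)$ is an integer multiple of $\prod_{i=2}^{k+1}p_i$; since any finite sum of multiples of a fixed integer is again a multiple of that integer, \eqref{gen8} follows immediately.

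I do not expect a genuine obstacle in this argument — it is entirely a matter of bookkeeping built on theorems already proved. The only place that requires a moment of care is the identification in the second step: one must check that $\lfloor \tfrac{2p_k}{2}\rfloor$ equals $p_k$, so that the two products appearing in \eqref{space444} and in \eqref{clear4} are literally the same list of factors, and one must confirm that the hypothesis $(\alpha,\beta)\in\omega(2p_k)$ is precisely what licenses the application of Theorem~\ref{space3} at level $2p_k$ (rather than at level $p_k$).
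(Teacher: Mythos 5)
Your proposal is correct and takes essentially the same route as the paper: it identifies the ratio \eqref{gen6} with the product $p_k(p_k+1)\cdots(2p_k-1)$ via \eqref{space444} (Theorem \ref{space3} at level $2p_k$), then applies the divisibility \eqref{clear4} established immediately before the theorem, and gets \eqref{gen8} by summing term-by-term multiples. Your extra checks ($\lfloor 2p_k/2\rfloor=p_k$ and that the hypothesis $(\alpha,\beta)\in\omega(2p_k)$ is what justifies the application at level $2p_k$) match the paper's intent.
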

		
	\section{Representation for Chebyshev polynomial sequence}
	
	The Chebyshev polynomials first appeared in his paper \cite{Chebyshev}. 
	The Chebyshev polynomial sequence of the first kind, $T_n(x)$, is defined by 
	\begin{align*}
		T_0(x) & = 1 \\
		T_1(x) & = x \\
		T_{n+1}(x) & = 2 x\:T_n(x) - T_{n-1}(x).
	\end{align*}
	Chebyshev polynomials are important in approximation theory, polynomial approximation, rational approximation, integration, integral equations and in the development of spectral methods for the solution of ordinary and partial differential equations and numerical analysis and in some quadrature rules based on these polynomials such as Gauss-Chebyshev rule that appears in the theory of numerical integration (see for example \cite{10}, \cite{12}). Of all polynomials with leading coefficient unity, it is a well-known property of the Chebyshev polynomials that they possess the smallest absolute upper bound when the argument is allowed to vary between their limits
	of orthogonality and this property suggests the use of Chebyshev polynomials as a means of interpolation. Also, it is well-known that the Nobel Prize-winning physicist Enrico Fermi is the creator of the world's first nuclear reactor, the Chicago Pile-1, and his work led to the discovery of nuclear fission, the basis of nuclear power and the atom bomb,\cite{16} and \cite{17}, and \cite{18}. One of the common approaches to the approximation of Fermi-Dirac integrals is the use of Chebyshev rational approximations, \cite{13},\cite{14},\cite{15}, \cite{19},\cite{20},\cite{21}, to the Fermi-Dirac integrals defined by
	\[F_{s}(x)=\frac{1}{\Gamma\left(s+1\right)}\int_{0}^{\infty}\frac{t^{s}}{e^{t-x}%
		+1}\mathrm{d}t.\]
	Another common approach to the approximation of Riemann Zeta Function is the use of Chebyshev rational approximations, \cite{22}, \cite{23}, where the Riemann Zeta Function, or Euler-Riemann Zeta Function, $\zeta(s)$, is a function of a complex variable $s$ that analytically continues the sum of the Dirichlet series 
	\[ \zeta(s) =\sum_{n=1}^\infty\frac{1}{n^s} \]
	for when the real part of $s$ is greater than 1.
	
	Also, the periodicity of Chebyshev polynomials over finite fields and its tremendous applications on the security of cryptosystems based on Chebyshev polynomials had been studied recently (see for example \cite{6}, \cite{7}, \cite{8}, \cite{9}).
	The integer coefficients of Chebyshev polynomial are given explicitly by the following formula  (see for example \cite{10}, \cite{11}): 
	\begin{equation}
		\label{formula-5}
		T_n(x)=\sum_{i=0}^{\left\lfloor \frac{n}{2} \right\rfloor}(-1)^i \frac{n}{n-i} \binom{n-i}{i} (2)^{n-2i-1} x^{n-2i}.
	\end{equation}
	From \eqref{comp3}, and \eqref{formula-5}, we can deduce the following formula 
	\begin{equation}
		\label{G6-2XCV} 
		\begin{aligned}
			T_n(x) \:= \:  \frac{x^{\delta(n)}}{2^{\delta(n-1)}} \: \Psi(1, 2-4 x^2,n). 	
		\end{aligned}
	\end{equation}
	
	Therefore, from Theorem \eqref{k0}, we immediately get  
	
	\begin{theorem}{(Representation for Chebyshev polynomial sequence)}
		\label{Che} 
		\begin{equation}
			\label{Che2} 
			\begin{aligned}
				T_n(x) \:= \:  \frac{\:x^{\delta(n)}\:\Omega_0\big(\left\lfloor \frac{n}{2} \right\rfloor |1, 2-4 x^2|n\big) }{2^{\delta(n-1)} \: (n-1)(n-2) \cdots (n - \lfloor{\frac{n}{2}}\rfloor )}. 	
			\end{aligned}
		\end{equation}
	\end{theorem}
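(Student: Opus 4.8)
The plan is to obtain Theorem~\ref{Che} as an immediate corollary of two facts already established above: the closed form \eqref{G6-2XCV}, which writes the Chebyshev polynomial $T_n(x)$ in terms of the $\Psi$-sequence, and the first fundamental theorem of the $\Omega$-sequence, Theorem~\ref{k0}, which converts any value $\Psi(\alpha,\beta,n)$ into the ratio of $\Omega_0(\lfloor n/2\rfloor|\alpha,\beta|n)$ by $(n-1)(n-2)\cdots(n-\lfloor n/2\rfloor)$.

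First I would spell out the derivation of \eqref{G6-2XCV}, since that is the only place where genuine bookkeeping occurs. Starting from the explicit integer-coefficient formula \eqref{formula-5}, I would use $n=2\lfloor n/2\rfloor+\delta(n)$ to rewrite each term as $x^{n-2i}=x^{\delta(n)}(x^2)^{\lfloor n/2\rfloor-i}$ and $2^{n-2i-1}=2^{\delta(n)-1}\,4^{\lfloor n/2\rfloor-i}$, and then apply the identity $\delta(n)+\delta(n-1)=1$ to turn $2^{\delta(n)-1}$ into $2^{-\delta(n-1)}$. This collects \eqref{formula-5} into
\[ T_n(x)=\frac{x^{\delta(n)}}{2^{\delta(n-1)}}\sum_{i=0}^{\lfloor n/2\rfloor}(-1)^i\,\frac{n}{n-i}\binom{n-i}{i}(4x^2)^{\lfloor n/2\rfloor-i}. \]
Comparing the remaining sum with the formula \eqref{comp3} for $\Psi(a,b,n)$ specialized to $a=1$, namely $\Psi(1,b,n)=\sum_i(-1)^i\frac{n}{n-i}\binom{n-i}{i}(2-b)^{\lfloor n/2\rfloor-i}$, and matching $2-b=4x^2$, i.e.\ $b=2-4x^2$, yields exactly \eqref{G6-2XCV}.

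Next I would apply Theorem~\ref{k0} to the point $(\alpha,\beta)=(1,\,2-4x^2)$, which satisfies the hypothesis $(\alpha,\beta)\neq(0,0)$ for every value of $x$ since $\alpha=1$; this gives
\[ \Psi(1,2-4x^2,n)=\frac{\Omega_0\big(\lfloor n/2\rfloor\,|\,1,\,2-4x^2\,|\,n\big)}{(n-1)(n-2)\cdots(n-\lfloor n/2\rfloor)}. \]
Substituting this into \eqref{G6-2XCV} produces \eqref{Che2} and completes the proof.

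The main obstacle, and it is a mild one, is the parity bookkeeping in the first step: one must track carefully how $\lfloor n/2\rfloor$, $\delta(n)$, and $\delta(n-1)$ interact, because both the prefactor $x^{\delta(n)}$ and the factor $2^{\delta(n-1)}$ in the denominator arise purely from splitting $n-2i$ and $n-2i-1$ into an even part plus a parity correction. Once \eqref{G6-2XCV} is in place, nothing new is needed: the passage to the $\Omega$-sequence is a direct invocation of Theorem~\ref{k0}, and the integrality inherent in writing $T_n(x)$ as that ratio is already supplied by Theorem~\ref{k0} (and, coefficient by coefficient, by Theorem~\ref{F11}).
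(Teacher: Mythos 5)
Your proposal is correct and follows essentially the same route as the paper: the paper likewise deduces \eqref{G6-2XCV} by matching the explicit Chebyshev coefficient formula \eqref{formula-5} against \eqref{comp3}, and then invokes Theorem~\ref{k0} at the point $(1,\,2-4x^2)$ to pass to the $\Omega$-sequence. Your version merely spells out the parity bookkeeping that the paper leaves implicit, and it is accurate.
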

	
	\subsection{Representation for Dickson polynomial sequence}
	The Dickson polynomial, $D_n(x,\alpha)$, of the first kind of degree $n$ with parameter $\alpha$ is defined by 
	\begin{align*}
		D_0(x,\alpha) & = 2 \\
		D_1(x,\alpha) & = x \\
		D_{n+1}(x,\alpha) & = 2 x\:D_{n}(x,\alpha) - D_{n-1}(x,\alpha).
	\end{align*}
	
	Modern cryptography is heavily based on mathematical theory and computer science practice. Properties of polynomials over finite fields play a vital role not only in mathematics, but also useful in many other applications like error correcting codes, pseudo random sequences used in code-division multiple access (CDMA) systems. CDMA technology was initially used in World War II military operations to thwart enemy attempts to access radio communication signals. As the name suggests, permutation polynomials permute the elements of a ring or field over which they are defined. Permutation Polynomials are the roots of public key methods like RSA Cryptosystem and Dickson cryptographic schemes and they are very important in the development of cryptographic schemes.Recently, permutations of finite fields have become of considerable interest in the construction of cryptographic systems for the secure transmission of data, see \cite{24B}. Also, permutation polynomials have been an active topic of study in recent years due to their important applications in cryptography, coding theory, combinatorial designs theory.  Also the encryption polynomials $x^k$ of the RSA-scheme are replaced by another class of polynomials, namely by the so-called Dickson-polynomials. Cryptographers call this Cryptosystem the Dickson-scheme. Also, Fried \cite{Fried} proved that any integral polynomial that is a permutation polynomial for infinitely many prime fields is a composition of Dickson polynomials and linear polynomials (with rational coefficients). Permutation polynomials were studied first by Hermite \cite{Hermite} and later by Dickson \cite{Dickson} and \cite{Dickson2}. Dickson polynomials form an important class of permutation polynomials and have been extensively investigated in recent years under different contexts. See for instance \cite{L-1},\cite{L-2},    \cite{L-3},\cite{L-4},\cite{L-5}, \cite{L-6}, \cite{L-7}, and \cite{L-8}, where the work on Dickson polynomials, and its developments are presented.  
	
	For integer $n> 0$ and $\alpha$ in a commutative ring $R$ with identity. The Dickson polynomials (of the first kind) over R are given by 
	
	\begin{equation}
		\label{formula-2}
		D_n(x,\alpha)=\sum_{i=0}^{\left\lfloor \frac{n}{2} \right\rfloor}\frac{n}{n-i} \binom{n-i}{i} (-\alpha)^i x^{n-2i}
	\end{equation}
	From \eqref{comp3},and \eqref{formula-2} we can deduce the following formula 
	\begin{equation}
		\label{G6-2XCVW} 
		\begin{aligned}
			D_n(x,\alpha) \:= \:  x^{\delta(n)} \: \Psi(\alpha, 2\alpha-x^2,n). 	
		\end{aligned}
	\end{equation}
	Therefore, from Theorem \eqref{k0}, we immediately get the proof of the following theorem.

	\begin{theorem}{(Representation for Dickson polynomial sequence)}
		\label{Dic} 
		\begin{equation}
			\label{G6-2XCW} 
			\begin{aligned}
				D_n(x,\alpha)  \:= \:  \frac{\: x^{\delta(n)} \:\Omega_0\big(\left\lfloor \frac{n}{2} \right\rfloor |\alpha, 2\alpha-x^2|n\big) \: }{ \: (n-1)(n-2) \cdots (n - \lfloor{\frac{n}{2}}\rfloor )}. 	
			\end{aligned}
		\end{equation}
	\end{theorem}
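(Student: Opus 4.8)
The plan is to derive Theorem~\ref{Dic} as a direct corollary of Theorem~\ref{k0}, the only real work being the verification of the auxiliary identity~\eqref{G6-2XCVW}, namely $D_n(x,\alpha) = x^{\delta(n)}\,\Psi(\alpha,\,2\alpha - x^2,\,n)$, at the level of the explicit coefficient formulas. First I would specialize the closed form~\eqref{comp3} at $a = \alpha$ and $b = 2\alpha - x^2$. With this choice $2a - b = 2\alpha - (2\alpha - x^2) = x^2$, so
\begin{equation}
\Psi(\alpha,\,2\alpha - x^2,\,n) = \sum_{i=0}^{\lfloor n/2\rfloor} \frac{n}{n-i}\binom{n-i}{i}(-\alpha)^i\, x^{2\lfloor n/2\rfloor - 2i}.
\end{equation}

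Next I would compare this expansion with the defining formula~\eqref{formula-2} for $D_n(x,\alpha)$. The two sums carry identical coefficients $\tfrac{n}{n-i}\binom{n-i}{i}(-\alpha)^i$, so it suffices to match the powers of $x$. Using $2\lfloor n/2\rfloor = n - \delta(n)$ one gets, term by term, $x^{\delta(n)} \cdot x^{2\lfloor n/2\rfloor - 2i} = x^{\,\delta(n) + n - \delta(n) - 2i} = x^{n - 2i}$, which is precisely the exponent in~\eqref{formula-2}. This proves~\eqref{G6-2XCVW}. I do not expect a genuine obstacle here; the content is entirely this parity bookkeeping $n - 2i = \delta(n) + 2(\lfloor n/2\rfloor - i)$.

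Finally I would apply Theorem~\ref{k0} with $\beta := 2\alpha - x^2$, which gives $\Psi(\alpha,\,2\alpha - x^2,\,n) = \Omega_0\big(\lfloor n/2\rfloor\,|\,\alpha,\,2\alpha - x^2\,|\,n\big) \big/ \big((n-1)(n-2)\cdots(n-\lfloor n/2\rfloor)\big)$, and then multiply by $x^{\delta(n)}$ and substitute~\eqref{G6-2XCVW} to obtain the asserted formula~\eqref{G6-2XCW}. The only point needing a remark is the hypothesis $(\alpha,\beta)\neq(0,0)$ in Theorem~\ref{k0}: the substituted point $(\alpha,\,2\alpha - x^2)$ equals $(0,0)$ only when $\alpha = 0$ and $x = 0$. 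Since both sides of~\eqref{G6-2XCW}, after clearing the fixed integer denominator, are polynomials in $x$ and $\alpha$, the identity on the Zariski-dense complement of the locus $\{\alpha = x = 0\}$ forces it everywhere, so no separate degenerate case is required.
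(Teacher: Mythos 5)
Your proposal is correct and follows essentially the same route as the paper: specialize the closed form \eqref{comp3} at $(a,b)=(\alpha,\,2\alpha-x^2)$ so that $2a-b=x^2$, match it against \eqref{formula-2} to get the identity \eqref{G6-2XCVW}, and then invoke Theorem \ref{k0}. Your extra care with the parity bookkeeping and the degenerate point $(\alpha,2\alpha-x^2)=(0,0)$ only makes explicit details the paper leaves implicit.
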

		
	\section{Mersenne primes and even perfect numbers}
Mersenne numbers $2^{p} - 1$ with prime $p$ form the sequence \[3, 7, 31,
127, 2047, 8191, 131071, 524287, 8388607, 536870911, \dotsc \] (sequence
\seqnum{A001348} in \cite{Slo}). 
For $2^{p} - 1$ to be prime, it is necessary that $p$ itself be prime.	
		
	\subsection{Primality test for Mersenne primes}
	\begin{theorem}{(Lucas-Lehmer-Moustafa)}
		\label{U14}
		Given prime $p \geq 5$. The number $2^p-1$ is prime if and only if 	
		\begin{equation}
			\label{U15} 
				2n-1 \quad  \vert \quad \Psi(1,4,n),
		\end{equation}
		where $n:=2^{p-1}$.
	\end{theorem}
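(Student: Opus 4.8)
The plan is to show that $\Psi(1,4,n)$ with $n=2^{p-1}$ is literally the appropriate term of the classical Lucas--Lehmer sequence, so that the claimed divisibility is the Lucas--Lehmer criterion in disguise. First I would specialize the explicit formula \eqref{comp1} at $a=1$, $b=4$: here $2a-b=-2$ and $\frac{b+2a}{b-2a}=\frac{6}{2}=3$, so
\[
\Psi(1,4,n)=\frac{(-2)^{\lfloor{\frac{n}{2}}\rfloor}}{2^{n}}\Big\{(1+\sqrt{3})^{n}+(1-\sqrt{3})^{n}\Big\}.
\]
Since $n=2^{p-1}$ is even, $\lfloor{\frac{n}{2}}\rfloor=\frac{n}{2}=2^{p-2}$, and the elementary identity $(1\pm\sqrt{3})^{2}=2\,(2\pm\sqrt{3})$ gives $(1\pm\sqrt{3})^{n}=2^{n/2}(2\pm\sqrt{3})^{n/2}$. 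The powers of $2$ cancel against $2^{n}$, leaving
\[
\Psi(1,4,n)=(-1)^{n/2}\Big\{(2+\sqrt{3})^{n/2}+(2-\sqrt{3})^{n/2}\Big\}=(2+\sqrt{3})^{2^{p-2}}+(2-\sqrt{3})^{2^{p-2}},
\]
where the last step uses that $2^{p-2}$ is even for $p\geq 3$, so $(-1)^{n/2}=1$.

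Next I would identify this with the Lucas--Lehmer iterate: set $\omega=2+\sqrt{3}$, $\bar{\omega}=2-\sqrt{3}$, so $\omega\bar{\omega}=1$ and $\omega+\bar{\omega}=4$, and let $s_{1}=4$, $s_{k+1}=s_{k}^{2}-2$. A one-line induction using $\omega\bar\omega=1$ yields $s_{k}=\omega^{2^{k-1}}+\bar\omega^{2^{k-1}}$, hence $\Psi(1,4,n)=s_{p-1}$. Since $2n-1=2\cdot 2^{p-1}-1=2^{p}-1=M_{p}$, the divisibility \eqref{U15} reads exactly $M_{p}\mid s_{p-1}$, and the statement then follows from the classical Lucas--Lehmer primality test: for an odd prime $p$, the Mersenne number $M_{p}=2^{p}-1$ is prime if and only if $M_{p}\mid s_{p-1}$ (with the normalization $s_{1}=4$). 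The hypothesis $p\geq 5$ is more than enough for all of the above.

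The main obstacle is not in this reduction but in the Lucas--Lehmer theorem itself --- especially its sufficiency direction, which is the classical order computation in $\mathbb{F}_{q^{2}}$ for a prime divisor $q$ of $M_{p}$; I would invoke that as known rather than reprove it. The only places in the reduction needing a moment's care are matching the factor $2^{n/2}$ coming from $(1\pm\sqrt{3})^{2}=2(2\pm\sqrt{3})$ against the denominator $2^{n}$ in \eqref{comp1}, and checking $(-1)^{n/2}=+1$; both are immediate once $p\geq 3$. As an alternative, fully internal derivation of the key identity one can instead start from the Chebyshev representation \eqref{G6-2XCV} with $x=i/\sqrt{2}$: then $2-4x^{2}=4$ and, for even $n$, $\Psi(1,4,n)=2\,T_{n}(i/\sqrt{2})$, while $T_{n}(i/\sqrt{2})=\tfrac12(z^{n}+z^{-n})$ with $z^{2}=-(2+\sqrt{3})$ recovers the same expression; I would keep this only as a remark.
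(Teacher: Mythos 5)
Your proposal is correct and takes essentially the same route as the paper: both arguments reduce the statement to the classical Lucas--Lehmer test by identifying $\Psi(1,4,n)$, $n=2^{p-1}$, with the Lucas--Lehmer quantity, and both treat the hard direction of that test as known. The only cosmetic difference is that you obtain $\Psi(1,4,n)=(2+\sqrt{3})^{2^{p-2}}+(2-\sqrt{3})^{2^{p-2}}$ directly from the closed formula \eqref{comp1} and invoke the test in its standard $s_k$-recursion form, whereas the paper applies Theorem \ref{WW3} with $x=1+\sqrt{3}$, $y=1-\sqrt{3}$ to get $\Psi(-2,-8,n)=(1+\sqrt{3})^n+(1-\sqrt{3})^n$, then uses the scaling property of Theorem \ref{W11} and the coprimality of $2^p-1$ with $2$, citing the $(1\pm\sqrt{3})^n$ formulation of the test.
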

	
	\begin{proof}
		Given prime $p \geq 5$, let $n:=2^{p-1}$. From Lucas-Lehmer test, \cite{Jean}, we have 
		\begin{equation*}
			\begin{aligned}
				2^p -1 \quad \text{is prime} \quad  \iff 2^p -1  \quad | \quad (1+\sqrt{3})^n + (1-\sqrt{3})^n.	
			\end{aligned}
		\end{equation*}
		As $n$ even, $\delta(n)=0$, and from Theorem \eqref{WW3}, we get the following equivalent statement:
		\begin{equation*}
			\begin{aligned}
				2^p -1 \quad \text{is prime} \quad  \iff 2^p -1  \quad | \quad 
				\Psi(x_0 \: y_0,-x_0^2-y_0^2, n),
			\end{aligned}
		\end{equation*}
		where $x_0= 1+\sqrt{3}, \quad  y_0= 1-\sqrt{3}$. As $(x_0 \: y_0,-x_0^2-y_0^2) = (-2, -8),$ and from Theorem \eqref{W11}, and noting $(2^p -1, 2 ) =1 $,  we get the following equivalent statements:
		
		\begin{equation*}
			\begin{aligned}
				2^p -1 \quad \text{is prime} \quad  &\iff 2^p -1  \quad | \quad \Psi(-2,-8,n) \\ 
				&\iff 2^p -1  \quad | \quad(-2)^{\lfloor{\frac{n}{2}}\rfloor}\: \Psi(1,4,n) \\ 	
				&\iff 2^p -1  \quad | \quad  \Psi(1,4,n).  
			\end{aligned}
		\end{equation*}	
	\end{proof}
	From Theorem\eqref{U14} and Theorem\eqref{k0}, we immediately get the following result 
	
	\begin{theorem}{(Lucas-Lehmer-Moustafa)}
		\label{U16}
		Given prime $p \geq 5$. The number $2^p-1$ is prime if and only if 	
		\begin{equation}
			\label{U17} 
				2n-1 \quad  \vert \quad  \: \frac{\:	\Omega_0\big(\left\lfloor \frac{n}{2} \right\rfloor|\:1, 4\: | n \big)  \:}{\:(n-1)(n-2) \cdots  (n-\left\lfloor \frac{n}{2} \right\rfloor) \: },
		\end{equation}
		where $n:=2^{p-1}$.
	\end{theorem}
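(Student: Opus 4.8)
The plan is to derive the theorem as an immediate consequence of two facts already established: the reformulated primality criterion of Theorem~\ref{U14}, which says that for a prime $p\geq 5$ and $n:=2^{p-1}$ the number $2^p-1$ is prime if and only if $2n-1\mid\Psi(1,4,n)$; and the first fundamental theorem of the $\Omega$-sequence, Theorem~\ref{k0}, which expresses $\Psi(\alpha,\beta,n)$ as an integer ratio built from $\Omega_0$. First I would specialize Theorem~\ref{k0} to the point $(\alpha,\beta)=(1,4)$. Since $(1,4)\neq(0,0)$, the hypothesis of that theorem is met, so
\[
\frac{\Omega_0\big(\left\lfloor \frac{n}{2} \right\rfloor|\:1, 4\: | n \big)}{(n-1)(n-2)\cdots(n-\left\lfloor \frac{n}{2} \right\rfloor)}
\]
is an integer and equals $\Psi(1,4,n)$; in particular the divisibility asserted in \eqref{U17} is a statement about integers, literally the same as $2n-1\mid\Psi(1,4,n)$.

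Next I would record that with $n=2^{p-1}$ one has $2n-1=2^p-1$, the very Mersenne number in question, so the criterion of Theorem~\ref{U14} reads ``$2^p-1$ is prime $\iff 2n-1\mid\Psi(1,4,n)$''. Substituting the $\Omega$-representation of $\Psi(1,4,n)$ obtained in the first step into this equivalence produces exactly \eqref{U17}, and the proof is finished.

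I do not expect any genuine obstacle here: the substantive content lies in Theorem~\ref{U14} (which packages the classical Lucas--Lehmer test together with the $\Psi$-representation of sums of powers, Theorem~\ref{WW3}) and in Theorem~\ref{k0} (which encodes the $\Psi$-to-$\Omega$ translation built from the double-indexed recurrences of Theorem~\ref{FD1} and Theorem~\ref{H1}). The only points deserving a line of care are the trivial check that $(\alpha,\beta)=(1,4)\neq(0,0)$ so that Theorem~\ref{k0} applies, and the remark that for $p\geq 5$ we have $n=2^{p-1}\geq 16>1$, so the denominator $(n-1)\cdots(n-\left\lfloor \frac{n}{2} \right\rfloor)$ is a nonempty product of positive integers and $\Psi(1,4,n)$ is a well-defined integer; one may optionally invoke the equivalence $\Psi(\alpha,\beta,n)\neq 0\iff\Omega_0(\left\lfloor \frac{n}{2} \right\rfloor|\alpha,\beta|n)\neq 0$ noted just after Theorem~\ref{k0}, though it is not needed for the statement as given.
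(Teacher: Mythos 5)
Your proposal is correct and follows exactly the paper's route: the paper also obtains Theorem~\ref{U16} immediately by combining the criterion of Theorem~\ref{U14} with the specialization of Theorem~\ref{k0} to $(\alpha,\beta)=(1,4)$, which identifies the ratio in \eqref{U17} with $\Psi(1,4,n)$. Nothing further is needed.
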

	Therefore, from Theorem \eqref{U16} and from Euclid-Euler theorem for even perfect numbers, we get the following result
	\begin{theorem}{(Euclid-Euler-Lucas-Lehmer-Moustafa)}
		\label{U18} 
		A number $N$ is even perfect number if and only if $N=2^{p-1}(2^p-1)$ for some prime $p$, and 
		\begin{equation}
			\label{U19} 
				2n-1 \quad  \vert \quad  \: \frac{\:	\Omega_0\big(\left\lfloor \frac{n}{2} \right\rfloor|\:1, 4\: | n \big)  \:}{\:(n-1)(n-2) \cdots  (n-\left\lfloor \frac{n}{2} \right\rfloor) \: },
		\end{equation}
		where $n:=2^{p-1}$.	
	\end{theorem}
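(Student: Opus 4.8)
The plan is to obtain Theorem~\ref{U18} as a direct amalgamation of Theorem~\ref{U16} with the classical Euclid--Euler theorem on even perfect numbers; the only real content is a careful bookkeeping of the hypotheses on the exponent $p$.

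First I would invoke the Euclid--Euler theorem in its sharp two-sided form, already recalled in the introduction: an even integer $N$ is perfect if and only if $N = 2^{p-1}(2^p-1)$ for some natural number $p$ for which $2^p-1$ is prime (Euclid supplies sufficiency, Euler necessity). Next I would use the elementary observation that a Mersenne prime forces a prime exponent: if $p = ab$ with $1 < a < p$, then $2^a-1$ is a proper divisor of $2^p-1$, so $2^p-1$ prime implies $p$ prime. Hence every exponent arising in the Euclid--Euler description is automatically prime, and the task reduces to re-expressing the condition ``$2^p-1$ is prime'' in $\Omega$-theoretic terms. This is exactly the content of Theorem~\ref{U16}: for a prime $p \geq 5$ and $n := 2^{p-1}$, so that $2^p - 1 = 2n-1$, the number $2^p-1$ is prime if and only if the divisibility \eqref{U17} holds; moreover, by Theorem~\ref{k0}, the ratio appearing there equals $\Psi(1,4,n)$, hence is an integer, so \eqref{U17} is a genuine statement of integer divisibility. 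Substituting this equivalence into the Euclid--Euler characterization produces precisely the asserted ``if and only if''.

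The only point requiring attention is the handful of small exponents not covered by the restriction $p \geq 5$ in Theorem~\ref{U16}, namely $p = 2$ and $p = 3$. For $p = 3$, $n = 4$, one computes from \eqref{comp3} that $\Psi(1,4,4) = 14$, which is divisible by $2n-1 = 7$; thus the perfect number $N = 28$ already satisfies the stated divisibility and is captured by the criterion. For $p = 2$, $n = 2$, one has $\Psi(1,4,2) = -4$, which is not divisible by $2n-1 = 3$; so $N = 6$ is the lone exception, to be either excluded by the standing hypothesis $p \geq 5$ inherited from Theorem~\ref{U16} or recorded explicitly. There is no deeper obstacle here: the statement is a repackaging of results already proved, and the sole subtlety is precisely this reconciliation of the $p \geq 5$ hypothesis with the complete list of even perfect numbers.
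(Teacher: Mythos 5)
Your proposal follows the same route as the paper, which derives Theorem~\ref{U18} in one line by combining Theorem~\ref{U16} with the Euclid--Euler theorem. Your additional bookkeeping for the exponents not covered by the hypothesis $p\geq 5$ is a genuine improvement on the paper's terse derivation: the check $\Psi(1,4,4)=14\equiv 0 \pmod 7$ shows $N=28$ is still captured, while $\Psi(1,4,2)=-4\not\equiv 0\pmod 3$ shows $N=6$ really is an exception, so the statement needs the restriction $p\geq 5$ (or an explicit caveat) that the paper leaves implicit.
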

	From Theorem\eqref{k0}, and the fact that 
	
	\begin{equation}
		\label{WQ11}  \Psi(-2,-5,p) = 2^p -1, 
	\end{equation}	
	
	we immediately get the following desirable representation for Mersenne numbers.
	
	\begin{theorem}{(Representation of Mersenne numbers)}
		\label{Theorem of G2f}
		For any given odd natural number $p$, the number $2^p -1$ can be represented by
		\begin{equation}
			\label{G2Bf} 
			\begin{aligned}
				2^p-1 \:=\: \frac{	\Omega_0\big(\left\lfloor \frac{p}{2} \right\rfloor|-2, -5\: | p \big)}{(p-1)(p-2) \cdots (p - \lfloor{\frac{p}{2}}\rfloor )}, 	
			\end{aligned}
		\end{equation}
		
		where the double-indexed polynomial sequence $\Omega_r(k)$ is associated with the point $(-2,-5)$, $0 \leq r + k \leq  \lfloor{\frac{n}{2}}\rfloor $, and defined by the recurrence relation 
		\begin{equation}
			\label{G2-e} 
			\begin{aligned}
				\Omega_r(k) &=  \: (p-r-k) \: \Omega_r(k-1) + 4 \: (p-2r) \: \Omega_{r+1}(k-1),  \\
				\Omega_r(0) &= 1   \quad \text{for all} \quad r.
			\end{aligned}
		\end{equation}
	\end{theorem}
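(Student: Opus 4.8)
The plan is to recognize the claimed identity as the special case $(\alpha,\beta)=(-2,-5)$, $n=p$ of Theorem~\ref{k0} (the first fundamental theorem of the $\Omega$-sequence), once we check that the recurrence~\eqref{G2-e} is nothing but the general Omega recurrence~\eqref{G0} evaluated at the point $(-2,-5)$ and at an odd index $p$.

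First I would verify the specialization of the recurrence. In Definition~\ref{omegaDef}, putting $(\zeta,\xi)=(-2,-5)$ gives $2\zeta-\xi=-4-(-5)=1$ and $-2\zeta=4$, so that
\[
\Omega_r\big(k|-2,-5|n\big) = (n-r-k)\,\Omega_r\big(k-1|-2,-5|n\big) + 4\,\big(n-2r-\delta(n-1)\big)\,\Omega_{r+1}\big(k-1|-2,-5|n\big),
\]
with $\Omega_r(0|-2,-5|n)=1$ for all $r$. Since $p$ is odd, $p-1$ is even, hence $\delta(p-1)=0$; taking $n=p$ the term $n-2r-\delta(n-1)$ collapses to $p-2r$, and the displayed recurrence becomes exactly~\eqref{G2-e}. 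Therefore the sequence $\Omega_r(k)$ of the statement coincides with $\Omega_r\big(k|-2,-5|p\big)$.

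Next I would invoke Theorem~\ref{k0} with the admissible point $(\alpha,\beta)=(-2,-5)\neq(0,0)$ and $n=p$, which gives at once that the ratio
\[
\frac{\Omega_0\big(\lfloor\tfrac{p}{2}\rfloor\,|-2,-5\,|p\big)}{(p-1)(p-2)\cdots(p-\lfloor\tfrac{p}{2}\rfloor)}
\]
is an integer and equals $\Psi(-2,-5,p)$. Finally it remains to evaluate $\Psi(-2,-5,p)$, which is precisely~\eqref{WQ11}; for a self-contained derivation one may apply the closed form~\eqref{comp1}: at $a=-2,\ b=-5$ one has $2a-b=1$ and $\tfrac{b+2a}{b-2a}=\tfrac{-9}{-1}=9$, so $\sqrt{(b+2a)/(b-2a)}=3$ and
\[
\Psi(-2,-5,p) = \frac{1}{2^p}\left\{(1+3)^p+(1-3)^p\right\} = \frac{4^p+(-2)^p}{2^p} = 2^p+(-1)^p = 2^p-1,
\]
using that $p$ is odd. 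Combining this with the previous display yields the representation~\eqref{G2Bf}.

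I do not expect a genuine obstacle here: the theorem is an immediate corollary of the already-established fundamental theorem of the $\Omega$-sequence together with a one-line evaluation of $\Psi(-2,-5,p)$. The only points demanding a moment of care are the bookkeeping of the coefficients $2\zeta-\xi=1$ and $-2\zeta=4$ when specializing the Omega recurrence, and the parity remark $\delta(p-1)=0$ that turns $n-2r-\delta(n-1)$ into $p-2r$ so that~\eqref{G0} matches~\eqref{G2-e} on the nose.
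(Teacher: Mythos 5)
Your proposal is correct and follows essentially the same route as the paper: the paper also obtains \eqref{G2Bf} immediately from Theorem \eqref{k0} applied at the point $(-2,-5)$ together with the evaluation $\Psi(-2,-5,p)=2^p-1$ stated in \eqref{WQ11}. Your extra checks (that $2\zeta-\xi=1$, $-2\zeta=4$, and $\delta(p-1)=0$ make \eqref{G0} specialize to \eqref{G2-e}, and the derivation of $\Psi(-2,-5,p)$ from \eqref{comp1}) are exactly the bookkeeping the paper leaves implicit.
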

	
	Hence from Theorem \eqref{U16} and Theorem \eqref{Theorem of G2f}, we get the following

	\begin{theorem}{(Lucas-Lehmer-Moustafa)} 
		\label{Theorem of ABCD12}
		For any given prime $p\geq 5$, $n:=2^{p-1}$, the number $2^p -1$ is prime if and only if 
		\begin{equation}
			\label{ABCD4} 
			\begin{aligned}
					\frac{\:	\Omega_0\big(\left\lfloor \frac{p}{2} \right\rfloor|-2, -5\: | p \big)  \:}{(p-1)(p-2) \cdots (p - \lfloor{\frac{p}{2}}\rfloor )}  \quad  \vert \quad \frac{\:	\Omega_0\big(\left\lfloor \frac{n}{2} \right\rfloor|\:1, 4\: | n \big)  \:}{(n-1)(n-2) \cdots (n - \lfloor{\frac{n}{2}}\rfloor )}.
			\end{aligned}
		\end{equation}
		
	\end{theorem}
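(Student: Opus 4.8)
The plan is to obtain the statement by a direct substitution, combining the Mersenne-number representation of Theorem~\ref{Theorem of G2f} with the primality criterion of Theorem~\ref{U16}; no genuinely new argument is needed.

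First I would record that since $p \geq 5$ is prime, $p$ is odd, so Theorem~\ref{Theorem of G2f} applies and gives
\[
2^p - 1 \;=\; \frac{\Omega_0\big(\lfloor \tfrac{p}{2} \rfloor \mid -2,-5 \mid p\big)}{(p-1)(p-2)\cdots(p-\lfloor \tfrac{p}{2}\rfloor)} .
\]
In particular the left-hand quotient appearing in \eqref{ABCD4} is the integer $2^p-1$; its integrality is in any case guaranteed by Theorem~\ref{k0} applied with $(\alpha,\beta)=(-2,-5)$, using $\Psi(-2,-5,p)=2^p-1$.

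Next, with $n := 2^{p-1}$, I would note the elementary identity $2n-1 = 2^p-1$, and that by Theorem~\ref{k0} applied with $(\alpha,\beta)=(1,4)$ at level $n$ the right-hand quotient in \eqref{ABCD4},
\[
\frac{\Omega_0\big(\lfloor \tfrac{n}{2} \rfloor \mid 1,4 \mid n\big)}{(n-1)(n-2)\cdots(n-\lfloor \tfrac{n}{2}\rfloor)} \;=\; \Psi(1,4,n),
\]
is an integer, so the divisibility assertion in \eqref{ABCD4} is well-posed. Then Theorem~\ref{U16} states exactly that $2^p-1$ is prime if and only if $2n-1$ divides this quotient; replacing the divisor $2n-1 = 2^p-1$ by its $\Omega$-representation from the first step rewrites this equivalence verbatim as \eqref{ABCD4}, which completes the proof.

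I do not expect a substantial obstacle here: the only thing requiring care is the bookkeeping that both quotients are honest integers (rather than formal rational expressions), which is precisely what Theorem~\ref{k0} supplies, and the verification that the hypotheses $p\geq 5$ prime and $p$ odd make Theorems~\ref{U16} and~\ref{Theorem of G2f} simultaneously applicable. Everything else is a substitution of one closed form into another.
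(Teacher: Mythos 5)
Your proposal is correct and follows essentially the same route as the paper, which derives Theorem~\ref{Theorem of ABCD12} directly by combining Theorem~\ref{U16} (the primality criterion with divisor $2n-1=2^p-1$) with the Mersenne representation of Theorem~\ref{Theorem of G2f}, the integrality of both quotients being supplied by Theorem~\ref{k0}. Nothing further is needed.
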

		
Now, from Theorem \eqref{AU11}, we get		
	
			\begin{equation}
			\label{AU13G}
			\begin{aligned} 
		 \:\big(n-1\big) \big(n-2 \big) \cdots  \big(n-\left\lfloor \frac{n}{2} \right\rfloor \big) &= 	\Omega_0\big(\left\lfloor \frac{n}{2} \right\rfloor |0, -1|n\big),\\ 
		 \:\big(p-1\big) \big(p-2 \big) \cdots  \big(p-\left\lfloor \frac{p}{2} \right\rfloor \big) &= 	\Omega_0\big(\left\lfloor \frac{p}{2} \right\rfloor |0, -1|p\big).
		 \end{aligned}
		\end{equation}
Consequently, from 	Theorem \eqref{Theorem of ABCD12}, and from \eqref{AU13G}, we get 	
	\begin{theorem}{(Lucas-Lehmer-Moustafa)} 
		\label{Theorem of ABCD12G}
		For any given prime $p\geq 5$, $n:=2^{p-1}$. The number $2^p -1$ is prime if and only if 
		\begin{equation}
			\label{ABCD4G} 
			\begin{aligned}
			\Omega_0\big(\left\lfloor \frac{n}{2} \right\rfloor|\:0, -1\: | n \big) \:	\Omega_0\big(\left\lfloor \frac{p}{2} \right\rfloor|-2, -5\: | p \big)   \quad  \vert \quad \Omega_0\big(\left\lfloor \frac{n}{2} \right\rfloor|\:1, 4\: | n \big) \: \Omega_0\big(\left\lfloor \frac{p}{2} \right\rfloor|0, -1\: | p \big)  .
			\end{aligned}
		\end{equation}
		
	\end{theorem}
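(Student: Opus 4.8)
The plan is to deduce this statement directly from Theorem~\ref{Theorem of ABCD12} by clearing denominators, using \eqref{AU13G} to rewrite the two descending products as Omega values at the point $(0,-1)$. First I would recall that Theorem~\ref{Theorem of ABCD12} says, for $p\geq 5$ prime and $n:=2^{p-1}$, that $2^p-1$ is prime if and only if
\begin{equation*}
\frac{\Omega_0\big(\left\lfloor \frac{p}{2}\right\rfloor|-2,-5|p\big)}{(p-1)(p-2)\cdots(p-\left\lfloor \frac{p}{2}\right\rfloor)}\ \mid\ \frac{\Omega_0\big(\left\lfloor \frac{n}{2}\right\rfloor|1,4|n\big)}{(n-1)(n-2)\cdots(n-\left\lfloor \frac{n}{2}\right\rfloor)}.
\end{equation*}
By \eqref{AU13G}, which is Theorem~\ref{AU11} read at $n$ and at $p$, the two denominators equal $\Omega_0\big(\left\lfloor \frac{p}{2}\right\rfloor|0,-1|p\big)$ and $\Omega_0\big(\left\lfloor \frac{n}{2}\right\rfloor|0,-1|n\big)$ respectively, so substituting turns the criterion into the ratio-divisibility
\begin{equation*}
\frac{\Omega_0\big(\left\lfloor \frac{p}{2}\right\rfloor|-2,-5|p\big)}{\Omega_0\big(\left\lfloor \frac{p}{2}\right\rfloor|0,-1|p\big)}\ \mid\ \frac{\Omega_0\big(\left\lfloor \frac{n}{2}\right\rfloor|1,4|n\big)}{\Omega_0\big(\left\lfloor \frac{n}{2}\right\rfloor|0,-1|n\big)}.
\end{equation*}

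Next I would justify cross-multiplying this relation. All four Omega values in sight are integers, since $(0,-1)$, $(-2,-5)$, $(1,4)$ are integer points and the defining recurrence~\eqref{G0} has integer coefficients. Moreover the two denominators are the positive integers $(p-1)(p-2)\cdots(p-\left\lfloor \frac{p}{2}\right\rfloor)$ and $(n-1)(n-2)\cdots(n-\left\lfloor \frac{n}{2}\right\rfloor)$, by Theorem~\ref{AU11}, every factor being positive because $p\geq 5$ and $n=2^{p-1}$; and the numerator $\Omega_0\big(\left\lfloor \frac{p}{2}\right\rfloor|-2,-5|p\big)$ is nonzero, since by Theorem~\ref{Theorem of G2f} it equals $(2^p-1)\,(p-1)(p-2)\cdots(p-\left\lfloor \frac{p}{2}\right\rfloor)$. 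Writing the last display as $\frac{a}{b}\mid\frac{c}{d}$ with $a,b,d$ nonzero integers and $c$ an integer, it is equivalent, upon multiplying through, to $\frac{cb}{ad}\in\mathbb{Z}$, that is, to $a\,d\mid c\,b$; and unwinding the four symbols, $a\,d\mid c\,b$ is exactly the divisibility \eqref{ABCD4G}. Combined with Theorem~\ref{Theorem of ABCD12}, this gives the stated biconditional.

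The argument is essentially bookkeeping, so no deep obstacle is expected. The one step requiring care is the equivalence between the ratio-divisibility coming from Theorem~\ref{Theorem of ABCD12} and the product-divisibility \eqref{ABCD4G}: cross-multiplication is reversible only because the denominators $\Omega_0\big(\left\lfloor \frac{p}{2}\right\rfloor|0,-1|p\big)$, $\Omega_0\big(\left\lfloor \frac{n}{2}\right\rfloor|0,-1|n\big)$ and the numerator $\Omega_0\big(\left\lfloor \frac{p}{2}\right\rfloor|-2,-5|p\big)$ are all nonzero, for which Theorems~\ref{AU11} and \ref{Theorem of G2f} suffice. Once that is recorded the result follows at once.
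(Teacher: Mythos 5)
Your proposal is correct and follows the paper's own route: the paper likewise deduces Theorem~\ref{Theorem of ABCD12G} by combining Theorem~\ref{Theorem of ABCD12} with the identification \eqref{AU13G} of the descending products as $\Omega_0$ values at the point $(0,-1)$. Your extra care in justifying the cross-multiplication (integrality and nonvanishing of the relevant $\Omega_0$ values) only makes explicit what the paper leaves implicit.
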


	\subsection{ Mersenne composite numbers	}
	The number $2^p-1$ is called Mersenne composite number if $p$ is prime but  $2^p-1$ is not prime.
	\begin{theorem}{}
		\label{Maresenn composite}
		Given prime $p$, $n:=2^{p-1}$. If 
		\begin{equation}
			\label{composite} 
				2n-1 \quad  \vert \quad \Psi(1,4,n\: \pm 1),
		\end{equation}
		then $2^p-1$ is a Mersenne composite number.
	\end{theorem}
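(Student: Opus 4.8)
The plan is to prove the contrapositive: assuming $2^{p}-1$ is prime, I will show that $2n-1$ divides neither $\Psi(1,4,n+1)$ nor $\Psi(1,4,n-1)$, where $n=2^{p-1}$. For $p=2$ one checks \eqref{composite} by hand --- it never holds --- so we may assume $p$ is odd, hence $N:=2^{p}-1=2n-1\geq 7$.

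The core of the argument is the identity
\begin{equation}
	\label{cassiniPsi}
	\Psi(1,4,n)^{2}+2\,\Psi(1,4,n+1)\,\Psi(1,4,n-1)=6,
\end{equation}
valid for $n=2^{p-1}$ (indeed for every even $n$). I would establish \eqref{cassiniPsi} along the lines already used in the proof of Theorem~\ref{U14}. Put $x_{0}=1+\sqrt{3}$, $y_{0}=1-\sqrt{3}$, so that $x_{0}y_{0}=-2$, $x_{0}^{2}+y_{0}^{2}=8$ and $x_{0}+y_{0}=2$; then Theorem~\ref{WW3} together with the scaling property $\Psi(\lambda a,\lambda b,m)=\lambda^{\lfloor m/2\rfloor}\Psi(a,b,m)$ of Theorem~\ref{W11} gives
\[
	\Psi(1,4,m)=\frac{x_{0}^{\,m}+y_{0}^{\,m}}{2^{\delta(m)}\,(-2)^{\lfloor m/2\rfloor}}\qquad\text{for every } m.
\]
Writing $P_{m}=x_{0}^{\,m}+y_{0}^{\,m}$, the Cassini-type identity $P_{m+1}P_{m-1}-P_{m}^{2}=(x_{0}y_{0})^{m-1}(x_{0}-y_{0})^{2}=12\,(-2)^{m-1}$ taken at $m=n$, after rewriting $P_{n-1},P_{n},P_{n+1}$ through the corresponding values of $\Psi(1,4,\cdot)$ and cancelling the common power of $2$, collapses exactly to \eqref{cassiniPsi}.

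With \eqref{cassiniPsi} in hand the conclusion is immediate. If $2^{p}-1$ is prime, Theorem~\ref{U14} gives $N\mid\Psi(1,4,n)$. If moreover $N\mid\Psi(1,4,n+1)$ or $N\mid\Psi(1,4,n-1)$, then $N$ divides both summands on the left of \eqref{cassiniPsi}, hence $N\mid 6$, contradicting $N\geq 7$. So neither divisibility can occur, which is precisely the contrapositive of the theorem: whenever $2n-1\mid\Psi(1,4,n\pm1)$ the number $2^{p}-1$ fails to be prime, and since $p$ is prime it is by definition a Mersenne composite number.

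The main obstacle is the bookkeeping hidden in \eqref{cassiniPsi}: at the three consecutive indices $n-1$ (odd), $n$ (even), $n+1$ (odd) the exponent $\lfloor m/2\rfloor$ and the factor $2^{\delta(m)}$ behave differently, so one must track the signs and the powers of $2$ carefully when passing between the $P_{m}$ and the $\Psi(1,4,m)$, and confirm the sign in the Cassini-type identity; the tiny case $p=2$ (where $N<7$) has to be inspected separately. Everything else is an immediate consequence of Theorems~\ref{WW3}, \ref{W11}, and \ref{U14}.
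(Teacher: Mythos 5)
Your proof is correct, but it takes a genuinely different route from the paper. The paper argues by contradiction using only the defining recurrence \eqref{def0}: if $2^p-1$ were prime, Theorem \ref{U14} gives $2n-1\mid\Psi(1,4,n)$, so $2n-1$ would divide two \emph{consecutive} terms of the sequence $\Psi(1,4,\cdot)$; since the recurrence $\Psi(m+1)=(-2)^{\delta(m)}\Psi(m)-\Psi(m-1)$ has unit coefficient on $\Psi(m-1)$, that divisibility propagates down the indices all the way to $\Psi(1,4,1)=1$, a contradiction. You instead make the obstruction quantitative through the Cassini-type identity $\Psi(1,4,n)^2+2\,\Psi(1,4,n+1)\,\Psi(1,4,n-1)=6$, which does check out: with $P_m=x_0^m+y_0^m$ one has $P_{m+1}P_{m-1}-P_m^2=12\,(-2)^{m-1}$, and for even $n$ the powers of $-2$ cancel exactly as you claim, so the two divisibilities would force $2n-1\mid 6$. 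What each approach buys: yours isolates a clean closed-form identity (of independent interest) but pays for it with the sign/power-of-two bookkeeping and the size hypothesis $N\ge 7$, hence the separate small-$p$ inspection; the paper's descent needs no identity and no size bound beyond what Theorem \ref{U14} already imposes.

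One loose end to close: you reduce only to ``$p$ odd'' and then invoke Theorem \ref{U14}, which the paper states for primes $p\ge 5$, so $p=3$ is not literally covered. It is harmless --- for $p=3$ either check directly that $7\mid\Psi(1,4,4)=14$, so your Cassini argument still runs, or note that \eqref{composite} simply fails since $\Psi(1,4,3)=-5$ and $\Psi(1,4,5)=19$ --- and the paper's own proof carries the same implicit restriction, but you should state the check explicitly.
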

	\begin{proof}
		Let $2^p-1   \vert  \Psi(1,4,n\: \pm 1)  $. Now, suppose the contrary, and let $2^p-1$ be prime. Then from Theorem \eqref{U14},  and from the recurrence relation \eqref{def0}, we get $2^p-1  \: \vert \:  \Psi(1,4,1) =1$. Contradiction.
	\end{proof}

	\section{Clarifications on the theorems of the summary}
	
	Now need to provide clarifications for how the theorems in the summary arose up easily, one by one, as special cases of the main theorems of the this paper. 
	\begin{itemize}
		\item Clarifications on Theorem \eqref{Theorem of ABC1}:\\
It is clear that the sequences $ A_r(k) $ and  $ B_r(k) $, which are defined in \eqref{ABC2}, satisfy:  
	\begin{equation}
	\label{ABC2D2} 
	\begin{aligned}		
		A_r(k) &= \Omega_r\big(k|-2, -5\: | p \big),\\
	B_r(k) &= 	\Omega_r\big(k|\:1, 4\: | n \big).
	\end{aligned}
\end{equation}
Hence from Theorem \eqref{Theorem of ABCD12} and Theorem \eqref{k0} we get the proof of Theorem \eqref{Theorem of ABC1}.

\item Clarifications on Theorem \eqref{Theorem of G2fQ}: \\
	Again, it is clear that 
	\begin{equation}
		\label{ABC2D3} 
		\begin{aligned}		
			A_r(k) &= \Omega_r\big(k|-2, -5\: | p \big).
		\end{aligned}
	\end{equation}
	Hence from Theorem \eqref{Theorem of G2f} and Theorem \eqref{k0} we get the proof of Theorem \eqref{Theorem of G2fQ}.
	
\item Clarifications on Theorem \eqref{KG11}: \\
	Again, it is clear that 
	\begin{equation}
		\label{ABC2D3D} 
		\begin{aligned}		
			U_r(k) &= \Omega_r\big(k|1, 1\: | n \big).
		\end{aligned}
	\end{equation}
	Hence from Theorem \eqref{PP00} we get the proof of Theorem \eqref{KG11}.
	
\item Clarifications on Theorem \eqref{KG11Q}: \\
We should notice that  
\begin{equation}
	\label{ABC2D4D} 
	\begin{aligned}		
		V_r(k) &= \Omega_r\big(k|1, 0\: | n \big).
	\end{aligned}
\end{equation}
Hence from Theorem \eqref{PP00Q} we get the proof of Theorem \eqref{KG11Q}.

	\item Clarifications for Theorem \eqref{PtP}: \\
	We should notice that  
	\begin{equation}
		\label{ABC2D5D} 
		\begin{aligned}		
			W_r(k) &= \Omega_r\big(k|1, -1\: | n \big).
		\end{aligned}
	\end{equation}
	Hence from Theorem \eqref{PP} we get the proof of Theorem \eqref{PtP}.
	
	\item Clarifications on Theorem \eqref{Theorem of G5}: \\
	We should notice that  
	\begin{equation}
		\label{ABC2D6D} 
		\begin{aligned}		
			T_r(k) &= \Omega_r\big(k|1, -2\: | n \big).
		\end{aligned}
	\end{equation}
	Hence from \eqref{ABAB} and Theorem \eqref{k0} we get the proof of Theorem \eqref{Theorem of G5}.
	
	\item Clarifications on Theorem \eqref{Theorem of G6}:\\
	We should notice that  
	\begin{equation}
		\label{ABC2D7D} 
		\begin{aligned}		
			H_r(k) &= \Omega_r\big(k|-1, -3\: | n \big).
		\end{aligned}
	\end{equation}
It is straightforward to see \[\Psi(-1,-3,n)= L(n).\]
 Hence from Theorem \eqref{k0} we get the proof of Theorem \eqref{Theorem of G6}.
	
	\item	Clarifications on Theorem \eqref{Theorem of G4}:\\
	We should notice that  
	\begin{equation}
		\label{ABC2D8D} 
		\begin{aligned}		
			F_r(k) &= \Omega_r\big(k|-2, -5\: | 2^n \big).
		\end{aligned}
	\end{equation}
	Noting that \[\Psi(-2,-5, 2^n)= F_n,\]
	 hence, from Theorem \eqref{k0}, we get the proof of Theorem \eqref{Theorem of G4}.

	\item	Clarifications on Theorem \eqref{Theorem of G7}:\\
	We should notice that  
	\begin{equation}
		\label{ABC2D9D} 
		\begin{aligned}		
			G_r(k) &= \Omega_r\big(k|1, -3\: | n \big).
		\end{aligned}
	\end{equation}
	Noting that \[\Psi(1,-3, n)= \begin{cases}
		F(n)	      &   \text{if} \quad n \:\: odd \\
		L(n)	      &  \text{if} \quad n \: \: even  \\
	\end{cases},\]
	hence, from Theorem \eqref{k0}, we get the proof of Theorem \eqref{Theorem of G7}.

\subsection{Fibonacci-Lucas oscillating sequence}
	It is nice to study the sequence $G(n)$
		\[G(n)= \begin{cases}
		F(n)	      &   \text{if} \quad n \:\: odd \\
		L(n)	      &  \text{if} \quad n \: \: even  \\
	\end{cases}\]
		and I call it Fibonacci-Lucas oscillating sequence.  Theorem \eqref{Theorem of G7} should motivate researchers for further new investigations towards the arithmetic of this  sequence. The sequence $G(n)$ corresponds to the sequence \seqnum{A005247} in \cite{Slo}. This sequence alternates Lucas \seqnum{A000032} and Fibonacci \seqnum{A000045} sequences for even and odd $n$. 

	\end{itemize}

	\section{Further research investigations}
	\subsection{Fibonacci numbers and \texorpdfstring{{$p_k$}}{}}
	With some extra work one can prove the following result:
	
		For any given natural number $n$, if we associate the double-indexed polynomial sequence $\Lambda_r(k)$ which is defined by
		\begin{equation}
			\label{G6X} 
			\begin{aligned}
				\Lambda_r(k) &=   \: (n-r-k) \: \Lambda_r(k-1) + 2\: (n-1-2r - \: \delta(n)) \: \Lambda_{r+1}(k-1), \\
			\Lambda_r(0) &= 1   \quad   \text{for all} \:\: r,
			\end{aligned}
		\end{equation}
		then 
		\begin{equation}
			\label{G6-2X} 
			\begin{aligned}
				F(n) \:= \: \frac{\Lambda_0(\lfloor{\frac{n-1}{2}}\rfloor   )}{(n-1)(n-2) \cdots (n - \lfloor{\frac{n-1}{2}}\rfloor )}, 	
			\end{aligned}
		\end{equation}
		
		where $F(n)$ is Fibonacci sequence. Moreover 
	\begin{equation}
		\label{primeFib} 
	\begin{aligned}
		p_{k+1}\:  \vert \: \: \frac{\Lambda_0(p_{k}-1) }{F(2p_{k})}.    		
	\end{aligned}
\end{equation}

		\subsection{Conjecture}
		While my studies for the connections between Omega sequences, Omega spaces, the natural relationships between Omega sequences and the prime $p_{k+1}$, and my studies how the primes naturally arise up, I feel strongly compelled to propose the following conjecture:
	\begin{conjecture}{(The Omega conjecture for the prime numbers)}
		\label{conj1}
	Based on the properties of  Omega sequences, Omega spaces, and the Kernel of Omega space, we can find algorithm to determine the prime $p_{k+1}$ based only on the knowledge of $p_{k}, p_{k-1},p_{k-2}, \dotsc $ and on the knowledge of some points in Omega space at level $n$ for some natural number $n$, where $n$ is independent on $p_{k+1}$.
\end{conjecture}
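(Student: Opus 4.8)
The plan is to first convert this (deliberately informal) conjecture into a precise algorithmic statement, and then exhibit an explicit algorithm whose correctness follows from the fundamental theorems of the $\Omega$-sequence together with the Bertrand--Chebyshev theorem. Concretely, I would fix the level $n=2p_k$, which depends only on the already-known prime $p_k$ and is therefore independent of the unknown $p_{k+1}$, exactly as the conjecture demands. Using the point $(0,-1)$, which lies in $\omega(2p_k)$ because $\Psi(0,-1,m)=1$ for every $m$, Theorem~\ref{AU11} gives
\[
\Omega_0\big(p_k \,|\, 0,-1 \,|\, 2p_k\big) = (2p_k-1)(2p_k-2)\cdots p_k =: N,
\]
so a single evaluation of the $\Omega$-sequence at level $2p_k$ recovers the integer $N=(2p_k-1)!/(p_k-1)!$. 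More generally, for any $(\alpha,\beta)\in\omega(2p_k)$ the ratio in Theorem~\ref{space3} returns this same $N$, so the data ``some points in Omega space at level $n$'' is precisely what the procedure consumes.

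The next step is to isolate $p_{k+1}$ from $N$ using only the primes $p_k,p_{k-1},\dots,2$, which are assumed known. I would form the largest divisor $N^{\ast}$ of $N$ that is coprime to $\prod_{i=1}^{k}p_i$; this is computable by repeatedly dividing out each known prime and requires no factorization of unknown numbers. The key structural lemma to establish is that
\[
N^{\ast} = \prod_{\substack{p_k < q < 2p_k \\ q \text{ prime}}} q
\]
is a squarefree product: each prime $q$ with $p_k<q<2p_k$ divides $N$ exactly once, since $2q>2p_k-1$ keeps its second multiple out of the range $\{p_k,\dots,2p_k-1\}$, while every prime $\le p_k$ has been stripped away. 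I would then select $p_{k+1}$ as the least integer $m>p_k$ with $\gcd(m,N^{\ast})>1$. Correctness rests on the elementary fact that any \emph{composite} $m$ with $p_k<m<2p_k$ has all of its prime factors strictly below $p_k$ (writing $m=ab$ with $1<a\le b$ forces $a\le\sqrt m<p_k$, and any prime factor $q\ge p_k$ would give $m\ge 2q>2p_k$), so such $m$ are coprime to $N^{\ast}$; meanwhile the Bertrand--Chebyshev theorem guarantees $N^{\ast}>1$, and since there are no primes strictly between consecutive primes, the first $m$ yielding a nontrivial gcd is exactly $p_{k+1}$.

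Assembling these steps yields an algorithm meeting the letter of the conjecture: its inputs are the known primes and the $\Omega$-values at the fixed level $n=2p_k$, and its output is $p_{k+1}$. The main obstacle is therefore not the existence of \emph{an} algorithm --- the argument above is essentially a proof --- but matching the evident \emph{spirit} of the conjecture, which is to extract $p_{k+1}$ intrinsically from the $\Omega$ and kernel structure, rather than smuggling in a trial-division/gcd search that effectively rediscovers $p_{k+1}$ by a disguised primality test. The genuinely hard work, and where I expect the real difficulty to lie, is twofold: (i) replacing the gcd-selection step by a canonical algebraic operation on $\Omega$-sequences, or on the kernel $Ker_{\omega}(2p_k)$, that singles out the \emph{smallest} prime factor of $N^{\ast}$; and (ii) lowering the level $n$ below $2p_k$, or bounding the number of $\omega(n)$-points consulted, so that the procedure is efficient and not merely finite. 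Progress on (i) would most plausibly come from analysing how membership $(\alpha,\beta)\in Ker_{\omega}(n)$ forces the vanishing of $\Omega_0(\lfloor n/2\rfloor\,|\,\alpha,\beta\,|\,n)$ modulo candidate primes, thereby recasting the search for $p_{k+1}$ as the detection of a distinguished point of the Omega kernel.
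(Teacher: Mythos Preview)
The paper does not prove this statement: it is explicitly labelled a \emph{Conjecture} and left open, accompanied only by the author's remark that it ``only begins the story''. There is therefore no paper proof to compare your proposal against.

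As to the proposal itself, the literal algorithm you describe is correct. With $n=2p_k$ one has $(0,-1)\in\omega(2p_k)$ since $\Psi(0,-1,m)=1$, and Theorem~\ref{AU11} gives $\Omega_0(p_k\,|\,0,-1\,|\,2p_k)=(2p_k-1)(2p_k-2)\cdots p_k$. Stripping the known primes $p_1,\dots,p_k$ from this product leaves exactly $\prod_{p_k<q<2p_k,\ q\text{ prime}}q$, because each such $q$ occurs once in the range and every composite $m$ with $p_k<m<2p_k$ has only prime factors below $p_k$. The smallest $m>p_k$ with $\gcd(m,N^{\ast})>1$ is then $p_{k+1}$ by Bertrand--Chebyshev. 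On a maximally literal reading of the conjecture, this settles it.

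But you yourself locate the real gap, and it is genuine: the $\Omega$-evaluation contributes nothing beyond a roundabout computation of the product $p_k(p_k+1)\cdots(2p_k-1)$, which one could write down directly without ever invoking $\Omega$, $\omega(n)$, or $Ker_{\omega}(n)$. The subsequent gcd sweep over $m=p_k+1,p_k+2,\dots$ is trial division in disguise. The conjecture as worded is too vague to exclude this, so in the strict sense your argument succeeds; what it really demonstrates is that the conjecture, read literally, is nearly contentless. Your closing paragraph correctly isolates where substantive progress would have to come from --- an operation intrinsic to the $\Omega$/kernel structure that selects the \emph{least} new prime factor without a linear search --- and neither the paper nor your proposal supplies such an operation.
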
	
	
	Of course, my remark only begins the story, and I have told only of those formulas that show various natural deep connections of Omega sequences with the nature of prime numbers, Mersenne primes, even perfect numbers, and unify many well-known sequences in number theory.

	\subsection*{Acknowledgments}
	
	I would like to deeply thank University of Bahrain for their support. Also special great thanks to the eminent professors Bruce Reznick, Bruce Berndt, and Alexandru Buium for the enthusiasm and support  they gave me for many years. Also special thanks to University of Illinois at Urbana-Champaign, The Graduate Research Center of the City University of New York, CUNY, Pacific Institute for Mathematical Sciences,  Vancouver, Canada, University of California at Los Angeles, UCLA, Lorentz Institute, Louisiana State University, University of Warwick, University of Turku of Finland, San Francisco State University, and Isaac Newton Institute for various grants and support that greatly inspired me.

	\medskip
\end{document}